\newcommand{\aut}{\mathfrak{hol}}
\newcommand{\C}{\mathbb{C}}
\newcommand{\CN}{\mathbb{C}^N}
\newcommand{\CNp}{\mathbb{C}^{N^\prime}}
\newcommand{\N}{\mathbb{N}}
\newcommand{\todo}[1]{}
\newlength{\extendaxesby}\setlength{\extendaxesby}{.4cm}
\DeclareMathOperator{\id}{id}
 \newtheorem{thm}{Theorem}
\newtheorem{theorem}[thm]{Theorem}
\newtheorem{lemma}[thm]{Lemma}
\newtheorem{prop}[thm]{Proposition}
\newtheorem{cor}[thm]{Corollary}
\newtheorem{corollary}[thm]{Corollary}
\theoremstyle{definition}
\newtheorem{definition}[thm]{Definition}
\newtheorem{exa}{Example}
\newtheorem{example}[exa]{Example}
\newtheorem{remark}[thm]{Remark}
\begin{document}
\title{Local and infinitesimal rigidity \\of 
hypersurface embeddings}
\author{Giuseppe della Sala}
\address{Fakult\"at f\"ur Mathematik, Universit\"at Wien}
\email{giuseppe.dellasala@univie.ac.at}
\author{Bernhard Lamel}
\address{Fakult\"at f\"ur Mathematik, Universit\"at Wien}
\email{bernhard.lamel@univie.ac.at}
\author{Michael Reiter}
\address{Texas A\&M University at Qatar}
\email{reiter-michael@gmx.at}

 \thanks{The third author was supported by the FWF-Project I382 and QNRF-Project NPRP 7-511-1-098.}

\begin{abstract}
We study local rigidity properties of holomorphic
 embeddings of real hypersurfaces in $\C^2$ into
real hypersurfaces in $\C^3$ and show that infinitesimal conditions imply actual local 
rigidity in a number of (important) cases. We use this to show that generic embeddings
into a hyperquadric in $\C^3$ are locally rigid. 
\end{abstract}
\maketitle

\pagestyle{plain}

\section{Introduction}

Let $M\subset \mathbb C^N$, $M'\subset\mathbb C^{N'}$ be real hypersurfaces, $N\leq N'$. We say that $M$ admits a \emph{holomorphic embedding} into $M'$ if there exists a holomorphic map $H:\mathbb C^N\to \mathbb C^{N'}$, of maximal rank, such that $H(M)\subset M'$. 
The automorphism group $G={\rm Aut}(M)\times {\rm Aut}(M')$ acts on the set of 
embeddings of $M$ into $M'$ by the natural action $H\mapsto \sigma'\circ H \circ \sigma^{-1}$ for 
$(\sigma,\sigma')\in {\rm Aut}(M)\times {\rm Aut}(M')$. It may well happen that the orbit of this action on 
a given map $H$ does not give rise to all other possible embeddings of $M$ into $M'$. 
In analogy to 
the terminology of Euclidean geometry, we say  that an embedding of  $M$  into $M'$
 is \emph{rigid} (or simply that the embeddings of $M$ into $M'$ are 
\emph{rigid}) if there is only one orbit for the action of $G$.

The study of holomorphic embeddings of real submanifolds, especially with respect to rigidity, 
has been an active research field for several decades. In the case $N=N'$, the work of Chern and 
Moser \cite{CM} allowed in particular to investigate the automorphism group of Levi-nondegenerate 
hypersurfaces. When $N<N'$ the interplay between the automorphism group and the set of embeddings 
might be intricate, and it is of special interest to consider the cases where the automorphism group 
is large, i.e.\ the hyperquadric case. In \cite{We}, Webster showed that the immersions of a 
hypersurface germ $M\subset \mathbb C^N$ into the sphere $\mathbb S^{N+1}\subset \mathbb C^{N+1}$ are 
rigid if $N\geq 4$ (if $M$ is itself a sphere, this holds for $N\geq 3$). The rigidity property holds 
as well for maps $\mathbb S^N\to \mathbb S^{N'}$ with $N\geq 3$, $N'\leq 2N-2$ (see Faran \cite{Fa} 
and Huang \cite{Hu}), while it does not hold for $N'\geq 2N-1$: there are two inequivalent classes 
for $N'=2N-1$ (see \cite{HJ}) and infinitely many for $N'\geq 2N$ (see \cite{Da}). 

More recent work on rigidity of embeddings includes the so-called ``super-rigidity'' discovered
by Baouendi and Huang \cite{Baouendi:2005uq} and treated further in Baouendi, Huang, and 
Zaitsev \cite{Baouendi:2008cz}. Further more recent results in the ``low codimension'' case for  
 the sphere have been obtained by Ebenfelt, Huang, and Zaitsev 	\cite{Ebenfelt:2004um,Ebenfelt:2005wj}, and in the case of small signature difference, by Ebenfelt and Shroff \cite{Ebenfelt:2015wq}.

In low source dimension, where any codimension and any sign difference is large,  
the structure of the set of embeddings can be more complicated. For $N=2$, $N'=3$, it has been shown 
by Faran \cite{Fa2} that the immersions $\mathbb S^2\to\mathbb S^3$ are not rigid with respect to the 
group $G={\rm Aut}(\mathbb S^2)\times {\rm Aut}(\mathbb S^3)$, but they consist of only four 
different classes. The case of immersions of $\mathbb S^2$ in the hyperquadric of $\mathbb C^3$ with 
signature $(1,1)$ has been first treated in \cite{Le}, where it is shown that they consist of  exactly 
seven classes. 

In \cites{Re2}, the third author reproved the  results of Faran and Lebl  
by means of a parametrization method for nondegenerate mappings (introduced in \cite{La}), which 
 also allowed to study properties of the action of the 
\emph{isotropy group} $G_0$
(consisting of only those automorphisms which fix a given pair of points). Additionally in \cite{Re3}, the 
moduli space of 
the set
of embeddings $\mathcal{F}$ with respect to the  
action of $G$ (resp. $G_0$) has been studied from a topological point of view. As it turns out, the 
topology of $\mathcal F / G$ is discrete in the case of Faran, where
 the target is a sphere,  while it is 
not discrete in the case of Lebl, where the target
has signature $(1,1)$,  
 despite the fact that $\mathcal F / G$ is a finite set in both cases.

On the other hand, the quotient space $\mathcal F / G_0$ is Hausdorff in both cases, but it is no longer finite. 
 Thus, if we consider only the action of the isotropies, the family $\mathcal F$ is not rigid even from a \emph{local} point of view.

In the present paper, our goal is to study this notion of local rigidity (to be defined below) 
for immersions between germs of real-analytic hypersurfaces $M\subset\mathbb C^2$,
 $M'\subset \mathbb C^3$ passing through the origin. Before stating our results, we recall that
 the {\em automorphism group} ${\rm Aut}_0 (M)$ of a germ of a real-analytic hypersurface in $\mathbb C^N$ passing through $0$ is defined to be
 the space of all germs of biholomorphic maps $\sigma\colon (\CN, 0) \to (\CN,0)$ which satisfy $\sigma(M) \subset M$.
 We 
consider the action of the {\em isotropy group}
 $G_0={\rm Aut}_0(M)\times {\rm Aut}_0(M')$ on the space $\mathcal F_2$ of $2$-nondegenerate, transversal
  embeddings $H\colon (M,0) \to (M',0)$ and say that a map $H$ is {\em locally rigid} if it projects to an isolated  point in $\faktor{\mathcal F}{G_0}$ (we refer the reader to Section~2 for the definitions of nondegeneracy and 
  transversality we use).

We say that a holomorphic section $V$ of $T^{1,0}(\mathbb C^3)|_{H(\mathbb C^2)}$, 
vanishing at $0$, is an \emph{infinitesimal deformation} of $H$ 
if the real part of $V$ is tangent to $M'$ along $H(M)$ (for the formal definition, see Definition \ref{def:infdef}). We denote the real vector space of these vector fields by $\mathfrak {hol}_0 (H)$. We then 
have the following result which shows that if a map $H$ is  infinitesimally trivial with 
respect to isotropies, i.e. 
if $\dim \mathfrak {hol}_0 (H) = 0  $, we have in particular local rigidity:
\begin{theorem}\label{suffconintro}
Let $M$ be a germ of a strictly pseudoconvex real-analytic hypersurface through $0$ in $\C^2$, and $M'$ be 
 a germ of a real-analytic hypersurface in $\C^3$. Let $H\colon (\C^2,0) \to (\C^3,0)$ be a germ 
 of a $2$-nondegenerate, transversal embedding satisfying
  $\dim_{\mathbb R} \mathfrak {hol}_0 (H) = 0$. Then $H$ is an isolated point in $\mathcal F_2$, and 
  in particular, $H$ is locally rigid. 
 \end{theorem}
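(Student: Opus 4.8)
The plan is to reduce the a priori infinite-dimensional problem of describing $\mathcal F_2$ near $H$ to a finite-dimensional one by means of the jet parametrization of nondegenerate maps, and then to identify the space of infinitesimal deformations $\mathfrak{hol}_0(H)$ with the (Zariski) tangent space of $\mathcal F_2$ at $H$. Once this identification is in place, the hypothesis $\dim_{\R}\mathfrak{hol}_0(H)=0$ forces the differential of the defining equations of $\mathcal F_2$ to be injective at $H$, and the conclusion follows from the implicit function theorem.

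First I would set up the parametrization. Since $M$ is strictly pseudoconvex, hence Levi-nondegenerate, and $H$ is $2$-nondegenerate and transversal, the method of \cite{La} (as exploited in \cite{Re2}) applies: every $2$-nondegenerate transversal embedding $\tilde H\colon(M,0)\to(M',0)$ sufficiently close to $H$ is uniquely determined by a jet $j^k_0\tilde H$ of some finite order $k$, and the correspondence $j^k_0\tilde H\mapsto\tilde H$ is real-analytic near $j^k_0 H$ in the finite-dimensional jet space. In this way a neighborhood of $H$ in $\mathcal F_2$ is identified with a real-analytic subset $\crb$ of a finite-dimensional real vector space, cut out by real-analytic equations $E(\lambda)=0$ that express the tangency condition $\tilde H(M)\subset M'$ at the level of jets. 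Because $2$-nondegeneracy and transversality are open conditions, all maps parametrized by points of $\crb$ near $j^k_0 H$ remain in $\mathcal F_2$.

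Next I would linearize. Writing $\rho'$ for a defining function of $M'$, membership $\tilde H(M)\subset M'$ reads $\rho'(\tilde H(z),\overline{\tilde H(z)})=0$ for $z\in M$. Differentiating a one-parameter family $\tilde H_t$ with $\tilde H_0=H$ along $V=\tfrac{d}{dt}\big|_{t=0}\tilde H_t$ yields the linearized equation
\begin{equation*}
2\,\real\bigl(\partial\rho'(H)\cdot V\bigr)\big|_{H(M)}=0,
\end{equation*}
which says precisely that the real part of $V$ is tangent to $M'$ along $H(M)$, i.e.\ that $V\in\mathfrak{hol}_0(H)$. Since the jet parametrization is a local bijection, its differential matches tangent vectors of $\crb$ at $j^k_0 H$ with such infinitesimal deformations $V$, each $V$ being recovered from its $k$-jet by $2$-nondegeneracy. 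Consequently the kernel of $dE$ at $j^k_0 H$ is canonically isomorphic to $\mathfrak{hol}_0(H)$.

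Finally, the hypothesis $\dim_{\R}\mathfrak{hol}_0(H)=0$ gives $\ker dE=0$ at $j^k_0 H$, so $E$ is an immersion there and hence locally injective; therefore $\crb=E^{-1}(0)$ reduces to the single point $j^k_0 H$ near $H$, which means $H$ is isolated in $\mathcal F_2$. Local rigidity is then immediate, since an isolated point of $\mathcal F_2$ projects to an isolated point of $\faktor{\mathcal F_2}{G_0}$. The main obstacle I anticipate is the careful meshing of the linearization with the jet parametrization: one must verify that differentiating the jet-level equations $E$ reproduces exactly the tangency condition defining $\mathfrak{hol}_0(H)$, and that the finite-order jet faithfully captures the full infinitesimal deformation, so that no deformation is lost or spuriously created in passing between the map level and the jet level. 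Establishing this compatibility, rather than the final dimension count, is where the real work lies.
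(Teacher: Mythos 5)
Your skeleton — reduce to the finite-dimensional jet space via Theorem \ref{jetparam}, linearize the tangency equations there, and use $\dim_{\mathbb R}\mathfrak{hol}_0(H)=0$ to force isolation — is the same as the paper's (Theorem \ref{suffcon1}), and your closing mechanism (trivial kernel of $dE$ $\Rightarrow$ local injectivity $\Rightarrow$ the zero set reduces to a point) can be made to work. The genuine gap is the step you rest everything on: the ``canonical isomorphism'' $\ker dE\cong\mathfrak{hol}_0(H)$ is asserted, not proved, and your justification — ``each $V$ being recovered from its $k$-jet by $2$-nondegeneracy'' — begs the question. Finite-jet determination of solutions of the \emph{linearized} equation does not follow by differentiating the nonlinear parametrization identity $H'=\Phi(\cdot,j_0^4H')$, because an element of $\mathfrak{hol}_0(H)$ need not integrate to a curve of actual maps in $\mathcal F_2$ (the paper stresses exactly this non-integrability). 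In the paper this recovery statement is a separate, substantial theorem, obtained by rerunning the whole reflection/Segre-set argument on the linearized equation (Proposition \ref{linjetparam}, Corollary \ref{cor:chohangeneral}); your one clause quietly assumes it.

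Two further points, one negative and one constructive. Negative: as you set things up, $E$ expresses only the tangency condition, and then $\ker dE(\Lambda_0)\supseteq\ker D\Phi(\Lambda_0)$; when $\mathfrak{hol}_0(H)=0$ one gets $\ker dE(\Lambda_0)=\ker D\Phi(\Lambda_0)$, and nothing in Theorem \ref{jetparam} shows this kernel is trivial — the identity $j_0^4\Phi(\cdot,\Lambda)=\Lambda$ is only known \emph{on} $A=j_0^4\mathcal F_2$, so it cannot be differentiated in arbitrary directions of the ambient jet space. Constructive: the surjectivity half of your isomorphism is never needed; it suffices to have the injection $\ker dE(\Lambda_0)\hookrightarrow\mathfrak{hol}_0(H)$, $\Lambda'\mapsto D\Phi(\Lambda_0)[\Lambda']$, and this becomes available precisely if you add the jet-consistency conditions $j_0^4\Phi(\cdot,\Lambda)-\Lambda=0$ to the defining equations: their linearization gives $j_0^4D\Phi(\Lambda_0)[\Lambda']=\Lambda'$, which yields injectivity, while the chain rule on the tangency equations (your displayed computation, i.e.\ Lemma \ref{contain} applied to Zariski tangent vectors) places the image in $\mathfrak{hol}_0(H)$; then choose finitely many of the countably many scalar equations whose differentials already have trivial common kernel and apply the immersion argument. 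For comparison, the paper also avoids surjectivity, but by a different route: injectivity of $\Phi$ on $A$ plus the rank theorem, upper semicontinuity of $\dim\mathfrak{hol}_0$ (Corollary \ref{semicont}, which is where Proposition \ref{linjetparam} enters), and discreteness of $0$-dimensional real-analytic sets (Lemma \ref{dim10}, Remark \ref{rem:equcon}); that formulation is kept at general $\ell=\dim\mathfrak{hol}_0$ because it is reused in Theorem \ref{suffcon2}, where a tangent-space count at the single point $\Lambda_0$ would not suffice.
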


If $M'$ satisfies the stonger assumption of Levi-nondegeneracy, we 
can considerably strengthen this type of result by considering the notion of {\em infinitesimal rigidity}, meaning 
that all infinitesimal deformations $H$ come from infinitesimal automorphisms of $M'$. 
Recall that  $\mathfrak {hol}_0 (M')$ denotes the space of \emph{infinitesimal isotropies of $M'$}, 
consisting of holomorphic vector fields of $\mathbb C^3$, 
vanishing at $0$, whose real part is tangent to $M'$.

\begin{theorem}\label{suffcon2intro}
Let $M,M'$ be as in Theorem \ref{suffconintro}, and assume in addition that $M'$ is Levi-nondegenerate. Let $H\colon (\C^2,0) \to (\C^3,0)$ be a germ of a $2$-nondegenerate, transversal embedding satisfying  $\dim_{\mathbb R} \mathfrak {hol}_0 (H) = \dim_{\mathbb R}\mathfrak{hol}_0(M') $. Then $H$ is locally rigid.
\end{theorem}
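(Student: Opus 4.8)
My plan is to identify $\mathfrak{hol}_0(H)$ with the tangent space to $\mathcal{F}_2$ at $H$, and the tangent space to the $G_0$-orbit of $H$ with the image of the linear map
\[
\Phi\colon \mathfrak{hol}_0(M)\oplus\mathfrak{hol}_0(M')\to\mathfrak{hol}_0(H),\qquad \Phi(X,X')=X'\circ H-dH\cdot X,
\]
obtained by differentiating $t\mapsto \exp(tX')\circ H\circ\exp(-tX)$ at $t=0$, where $\exp(tX)\in\Aut_0(M)$ and $\exp(tX')\in\Aut_0(M')$. Both summands indeed land in $\mathfrak{hol}_0(H)$: $X'\circ H$ is the restriction of a field tangent to $M'$, while $\real(dH\cdot X)$ is the $H$-pushforward of a field tangent to $M$, hence tangent to $M'$ along $H(M)$. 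Local rigidity will follow once the orbit is shown to fill a full neighbourhood of $H$ in $\mathcal{F}_2$; the heart of the matter is to prove first the infinitesimal version $\image\Phi=\mathfrak{hol}_0(H)$ and then to upgrade it to the local statement.

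The key step, and the one where Levi-nondegeneracy of $M'$ enters, is that the restriction map $X'\mapsto X'\circ H$ from $\mathfrak{hol}_0(M')$ to $\mathfrak{hol}_0(H)$ is injective. Suppose $X'\in\mathfrak{hol}_0(M')$ vanishes identically along $H(\C^2)$; then the complex-analytic zero set of $X'$ contains the hypersurface $H(\C^2)$, and the flow of $X'$ fixes $H(M)$ pointwise. Since $M'$ is Levi-nondegenerate, by Chern--Moser theory $\mathfrak{hol}_0(M')$ is finite-dimensional and, in normal coordinates, consists of polynomial vector fields of bounded weight, so their zero loci are strongly constrained; the assumption that $H$ is a $2$-nondegenerate, transversal embedding forces $H(\C^2)$ to curve into the normal direction in a way incompatible with being annihilated by a nonzero such field, ruling out $X'\neq0$. (It is precisely here that genuine $2$-nondegeneracy is needed: a flat, totally degenerate embedding such as a linear hyperplane slice can be annihilated by a rotational infinitesimal isotropy.) Granting this, a dimension count finishes the infinitesimal step: writing $I:=\{X'\circ H:X'\in\mathfrak{hol}_0(M')\}$, injectivity gives $\dim_{\mathbb R} I=\dim_{\mathbb R}\mathfrak{hol}_0(M')$, while taking $X=0$ yields $I\subseteq\image\Phi\subseteq\mathfrak{hol}_0(H)$. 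The hypothesis $\dim_{\mathbb R}\mathfrak{hol}_0(H)=\dim_{\mathbb R}\mathfrak{hol}_0(M')$ then squeezes all these spaces together, so $\image\Phi=\mathfrak{hol}_0(H)$; in fact every infinitesimal deformation of $H$ is already the restriction of an infinitesimal isotropy of $M'$, which is the asserted infinitesimal rigidity.

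To pass from $\image\Phi=\mathfrak{hol}_0(H)$ to local rigidity I would reuse the analytic framework underlying Theorem~\ref{suffconintro}. By the parametrization method, near $H$ the set $\mathcal{F}_2$ is a finite-dimensional real-analytic subset $Z$ whose Zariski tangent space at $H$ is exactly $\mathfrak{hol}_0(H)$, so $\dim_H Z\le\dim_{\mathbb R}\mathfrak{hol}_0(H)$. On the other hand $G_0=\Aut_0(M)\times\Aut_0(M')$ is a finite-dimensional Lie group (both factors being finite-dimensional since $M,M'$ are Levi-nondegenerate), acting real-analytically, so the orbit $G_0\cdot H\subseteq Z$ is an immersed submanifold of dimension $\dim_{\mathbb R}\image\Phi$. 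Combining,
\[
\dim(G_0\cdot H)=\dim_{\mathbb R}\image\Phi=\dim_{\mathbb R}\mathfrak{hol}_0(H)\ge\dim_H Z\ge\dim(G_0\cdot H),
\]
forcing equality throughout: $Z$ is smooth at $H$ with $\dim_H Z=\dim(G_0\cdot H)$, and the orbit is a submanifold of $Z$ of the same dimension. By invariance of domain the orbit is then open in $\mathcal{F}_2$ near $H$, i.e.\ $H$ projects to an isolated point of $\faktor{\mathcal{F}}{G_0}$, which is the desired local rigidity.

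The main obstacle is the injectivity lemma: quantifying how $2$-nondegeneracy of the embedding obstructs $H(\C^2)$ from lying in the zero locus of a nontrivial infinitesimal isotropy of $M'$. Everything else is either formal (the dimension squeeze) or inherited from the machinery already developed for Theorem~\ref{suffconintro} (the finite-dimensional real-analytic model of $\mathcal{F}_2$ and the identification of its tangent space with $\mathfrak{hol}_0(H)$).
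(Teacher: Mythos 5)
Your first half (the dimension squeeze) is broadly consistent with the paper: the restriction map $\mathfrak{hol}_0(M')\to\mathfrak{hol}_0(H)$ is injective, so under the hypothesis $\dim_{\mathbb R}\mathfrak{hol}_0(H)=\dim_{\mathbb R}\mathfrak{hol}_0(M')$ every infinitesimal deformation comes from $\mathfrak{hol}_0(M')$, and the bound $\dim_H Z\le\dim_{\mathbb R}\mathfrak{hol}_0(H)$ is exactly Lemma \ref{dim10}. Note, however, that you leave your own ``key step'' (injectivity) unproved: the heuristic that $H(\C^2)$ ``curves into the normal direction'' is not an argument, since a nonzero holomorphic vector field can perfectly well vanish on the complex hypersurface $H(\C^2)$ without vanishing identically (it is then divisible by a defining function of $H(\C^2)$). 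The paper establishes the corresponding fact as \emph{freeness} of the action of $\{\id\}\times{\rm Aut}_0(\mathbb H^3_\varepsilon)$ on the $4$-jet of a map of the form $(z,w)\mapsto(z,F(z,w),w)$ (Lemma \ref{free}), an explicit jet computation that uses transversality, the normalization $E_\varepsilon$, and $2$-nondegeneracy through $\Lambda_2^{2,0}\neq0$; some such computation is unavoidable.

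The fatal gap is your final step. From $\dim(G_0\cdot H)=\dim_H Z$ you conclude that $Z$ is smooth at $H$ and, ``by invariance of domain'', that the orbit is open in $\mathcal F_2$ near $H$. Neither implication is valid: a real-analytic set of dimension $d$ at $H$ can be a union of several $d$-dimensional manifolds through $H$ (think of $\{xy=0\}\subset\R^2$, with the orbit being one of the two lines), and invariance of domain applies to manifolds, not to analytic sets. An orbit of top dimension need not be a neighborhood of $H$ in $Z$, and the points on such extra branches are exactly the nearby maps that would destroy local rigidity, so this cannot be waved away. This is precisely the difficulty the paper's proof of Theorem \ref{suffcon2} is built to overcome: it first proves that the action of $G$ on the jet space is \emph{proper} (Lemmas \ref{proact} and \ref{proact2} --- a genuine issue, since ${\rm Aut}_0(\mathbb H^3_\pm)$ is non-compact, and without properness orbits need not even be locally closed) and \emph{free} (Lemma \ref{free}), and then applies the real-analytic local slice theorem to obtain an equivariant product chart $\mathcal V\cong B^m\times G'_{\id}$. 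Since $A=j_0^4\mathcal F_2$ is $G$-invariant (Lemma \ref{Ginv}), the product structure forces any portion of $A$ near $\Lambda_0$ beyond the orbit to be a union of orbits, i.e.\ it forces $A\cap S$ to be positive-dimensional, which would produce a submanifold of $A$ of dimension $\ell+1$ and contradict Lemma \ref{dim10}. In other words, it is the slice structure --- not equality of dimensions --- that excludes extra branches of the same dimension; your argument needs this ingredient (or a substitute for it) to go through.
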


We note that under reasonable assumptions on $M'$ (see \cite{JL}), which are
always satisfied in the cases we study,
 ${\rm Aut}_0(M')$ is a finite dimensional Lie group whose Lie algebra 
 is given by $\mathfrak {hol}_0 (M')$, which therefore is also finite dimensional. 
 The elements of $\mathfrak {hol}_0 (M')$ trivially restrict to elements of $\mathfrak {hol}_0 (H)$. With
 the assumptions of Theorem~\ref{suffcon2intro}, we have that this restriction map is injective, and hence the 
 inequality  $\dim \mathfrak {hol}_0 (H)\geq \dim\mathfrak {hol}_0 (M')$ always holds. 

Let us note that the concept of infinitesimal deformations has  already been considered by Cho and Han in \cite{CH}, where finite determination results are obtained by the method of complete systems when $M\subset \mathbb C^N$ and $M'\subset \mathbb C^{N'}$ are Levi-nondegenerate hypersurfaces and $H$ is a finitely nondegenerate embedding. We give a natural generalization of their result 
in Corollary~\autoref{cor:chohangeneral}.

We would like to point out to the reader 
 that the relationship between the space of infinitesimal deformations of $H$ and the 
 set of actual maps close to $H$ is not straightforward: among other things, it might 
 happen that the set of immersions is not a smooth manifold (see the structural results in \cite{Re3}). 
 The proof of Theorem \ref{suffcon2intro} is based on the analysis of the properties of the action of $G_0$ 
 on the space of jets of maps $M\to M'$ 
 (which is an analytic set by known parametrization results, see Theorem \ref{jetparam}), as well as a jet-parametrization result for the space $\mathfrak {hol}_0 (H)$. 
 We obtain the latter in Proposition \ref{linjetparam}: we remark that, for our purposes, 
 we need to take in account the dependence of the vector fields in
  $\mathfrak {hol}_0 (H)$ on their $4$-jet at $0$, as well as on the map $H$ and the source manifold $M$, thus we 
  need a result stronger than jet determination.

Due to the linearity of the sufficient conditions in Theorem \ref{suffconintro} and Theorem \ref{suffcon2intro}, 
their application to concrete situations is computationally easier than the study of the actual mapping problem. 
We use this to prove our  
Theorem \ref{genericity}, which  shows that a generic (in a sense specified in the statement of the result) embedding of a generic  hypersurface germ $M\subset \mathbb C^2$ into $\mathbb H^3_\pm$ is locally rigid. In order to prove this, we first compute the space of infinitesimal deformations of a special embedding $H_0$ of a certain hypersurface $M_0$ into $\mathbb H^3_\pm$ (cf. Example \ref{modelexample}), showing that its elements reduce to the restrictions of the elements of $\mathfrak {hol}_0 (\mathbb H^3_\pm)$ to $H_0(M_0)$. Then, we show that the method employed (following the same steps as in the proof of Proposition \ref{linjetparam}) allows to draw the same conclusion for a generic embedding, thus obtaining local rigidity. 

In Example \ref{itworks}, we apply the same method to an embedding of hypersurfaces which are both non-spherical (in such a situation, the computations involving the actual mapping equation seem to be even more complicated). On the other hand, the application of the same procedure in Example \ref{itdoesntwork} produces a space of infinitesimal deformations whose dimension is strictly larger than $\dim \mathfrak{hol}_0 (M')$. In general, 
 the dimension of $\mathfrak {hol}_0 (H)$ does not correspond to the actual dimension of the space of embeddings in a neighborhood of $H$: indeed,   an element $V\in \mathfrak {hol}_0 (H)$ which does not come from $\mathfrak{hol}_0 (M')$,  cannot be integrated.  In section \ref{spherecase}, we check that $\dim \mathfrak {hol}_0 (H)$ can be in fact much larger by computing the infinitesimal deformation space of an embedding of spheres which is known to be locally rigid by the work in \cite{Re2} and \cite{Re3}. In Example \ref{nonsphericalfamily} we construct a strictly pseudoconvex, non-spherical hypersurface which admits embeddings, which are not locally rigid.

The paper is organized as follows. In section \ref{sec:notation}, we introduce some notation and recall some well-known definitions. Furthermore, we give formal definition of local rigidity and summarize some known results related to the parametrization of nondegenerate embeddings (see Theorem \ref{jetparam}). In section \ref{sec:action}, we analyze the action of the isotropy group on the space of jets of (transversal, nondegenerate) maps $\mathbb C^2\to \mathbb C^3$, more specifically, with respect to properness and freeness. In section \ref{sec:linear} we define the notion of infinitesimal deformations $\mathfrak {hol}_0 (H)$ and prove Theorems \ref{suffcon1} and \ref{suffcon2},
which are 
reformulations of Theorems \ref{suffconintro} and \ref{suffcon2intro}, using a jet parametrization result for $\mathfrak {hol}_0 (H)$ obtained in section \ref{linearizedproblem}. In section \ref{sec:examples}, we apply the same methods to compute the space of infinitesimal deformations in several concrete examples of embeddings using Mathematica 9.0.1.0 \cite{wolfram}. In section \ref{sec:genericity}, we prove Theorem \ref{genericity} by a detailed examination of the parametrization procedure, using the computations performed in the model cases. In section \ref{spherecase}, we show that the sufficient condition provided by Theorem \ref{suffcon2} is not necessary, by computing the infinitesimal deformations of a certain locally rigid mapping between spheres.

\section{Notation, definitions}\label{sec:notation}

\subsection{Spaces of maps}

\

We will be interested in locally defined holomorphic maps from $\mathbb C^2$ to $\mathbb C^3$. From now on, we fix coordinates $Z = (z,w)$ for $\mathbb C^2$ and $Z'=(z_1', z_2', w')$ for $\mathbb C^3$.

We will identify the space of formal maps $H:\mathbb C^2 \to \mathbb C^3$ with $(\mathbb C[[z,w]])^3$, where $\mathbb C[[z,w]]$ is the ring of formal power series with complex coefficients in the indeterminates $(z,w)$. For any $k\in \mathbb N$ and $H\in (\mathbb C[[z,w]])^3$, we denote by $\varrho_k(H)$ the maximum of the moduli of the coefficients of the truncation of $H$ to the $k$-th order. Each $\varrho_k$ is a seminorm on $(\mathbb C[[z,w]])^3$, and it is easy to check that the collection of these seminorms induces on it a Frech\'et space topology.
 
 The tangent space to $(\mathbb C[[z,w]])^3$ at any of its points is of course again isomorphic to $(\mathbb C[[z,w]])^3$; nevertheless we will prefer to write an element $V$ of the tangent space as a  \lq\lq formal vector field of $\mathbb C^3$ defined along $\mathbb C^2$\rq\rq\ as follows:
\[V=\alpha(z,w)\frac{\partial}{\partial z_1'} + \beta(z,w)\frac{\partial}{\partial z_2'} + \gamma(z,w)\frac{\partial}{\partial w'}\]
where $\alpha,\beta,\gamma\in \mathbb C[[z,w]]$.

For any $R>0$, we denote by $B_R(0)$ the ball of $\mathbb C^2$ of radius $R$ centered at $0$. Furthermore we denote by ${\rm Hol}(\overline{B_R(0)}, \mathbb C^3)$ the space of holomorphic maps $B_R(0)\to \mathbb C^3$ which are continuous up to $\overline{B_R(0)}$. This is a Banach space with the sup norm on $\overline {B_R(0)}$.

We identify the space of germs at $0$ of holomorphic maps $\mathbb C^2\to \mathbb C^3$ with $(\mathbb C\{z,w\})^3$, where $\mathbb C\{z,w\}$ is the ring of convergent power series in the indeterminates $(z,w)$. We have that $(\mathbb C\{z,w\})^3=\cup_{R>0} {\rm Hol}(\overline{B_R(0)}, \mathbb C^3)$. Since for any $R'<R$ the restriction map ${\rm Hol}(\overline{B_R(0)}, \mathbb C^3)\to {\rm Hol}(\overline{B_{R'}(0)}, \mathbb C^3)$ is a compact operator, the direct limit topology gives to $(\mathbb C\{z,w\})^3$ the structure of a (DFS) space. In what follows we will refer to this topology for the germs of holomorphic maps $\mathbb C^2 \to \mathbb C^3$, as well as in general for all the rings of convergent power series. 

It is clear that the inclusions of ${\rm Hol}(\overline{B_R(0)}, \mathbb C^3)$ and $(\mathbb C\{z,w\})^3$ into $(\mathbb C[[z,w]])^3$ are continuous. We will identify the tangent spaces of ${\rm Hol}(\overline{B_R(0)}, \mathbb C^3)$ and $(\mathbb C\{z,w\})^3$ with holomorphic vector fields of $\mathbb C^3$ defined on a neighborhood of $0$ in $\mathbb C^2$, in a similar fashion as in the formal case.

\begin{remark}\label{implfunct}
With 
the direct limit topology on the rings of convergent power series, we have for instance that the maps $\mathfrak c_j:\mathbb C\{z,w\}\to \mathbb C\{z\}$ given by $\mathfrak c_j(f(z,w)=\sum_\ell f_\ell(z)w^\ell) = f_j(z)$ are continuous, uniformly for $j\in \mathbb N$. Moreover, denoting by $\Omega\subset(\mathbb C\{z,w\})^2$ the open subset given by germs with non-vanishing Jacobian at $0$, the inverse mapping theorem induces a continuous map $\Omega\to \Omega$. 
This implies that the series obtained by an application of the implicit function theorem also depend continuously on the initial data: we will use these observations in Proposition \ref{linjetparam}.
\end{remark}

\subsection{Embeddings into hypersurfaces}

We will fix a real-analytic hypersurface $M'=\{\rho=0\}\subset \mathbb C^3$, and denote by $\rho_{Z'}$ the complex gradient of $\rho$, i.e.\ 
\[\rho_{Z'} = \left (\frac{\partial \rho}{\partial z_1'}, \frac{\partial \rho}{\partial z_2'}, \frac{\partial \rho}{\partial w'}\right).\]
Up to an affine coordinate change, we can (and will) always assume that $0\in M'$ and $T_0(M')=\{{\rm Im}w' = 0\}$. We will be particularly interested in the spherical case, that is $M'=\mathbb H^3 = \{ \rho = {\rm Im} w' - |z_1'|^2 - |z_2'|^2 = 0\}\subset \mathbb C^3$. In this case we have
\[\rho_{Z'}= (-\overline z_1', - \overline z_2', - i/2).\]
We let $M\subset \mathbb C^2$ be a germ at $0$ of a strongly pseudoconvex hypersurface. We will be interested in studying the set of holomorphic embeddings of $M$ into $M'$ which satisfy certain generic conditions. The first one is the following:

\begin{definition}
We say that a map $H:\mathbb C^2 \to \mathbb C^3$ such that $H(0)=0$ is \emph{transversal} 
(to $ \{w = 0 \} $) at $0$ if $dH_0(\mathbb C^2) \not \subset \langle \frac{\partial}{\partial z_1'}, \frac{\partial}{\partial z_2'}\rangle$. If we write the components of $H$ as $(H^1,H^2,H^3)$, this is equivalent to $(\frac{\partial H^3}{\partial z}(0), \frac{\partial H^3}{\partial w}(0)) \neq (0,0)$. 
Its geometric meaning is that the differential of $H$ does not map 
the tangent space of $M$  into the complex tangent space of $M'$.
\end{definition}
\begin{remark}
If the map $H$ is an embedding of $M$ into $\mathbb H^3$ (or any strongly pseudoconvex hypersurface $M'$ of $\mathbb C^3$), the transversality condition is actually automatically satisfied. Indeed, let $N=H(M)\subset M'$; should $H$ not be transversal at $0$, we would have $\mathbb C T_0(N) = \langle \frac{\partial}{\partial z_1'}, \frac{\partial}{\partial z_2'}\rangle_{\mathbb C} = \mathbb C T_0^c(M')$. But then the commutator $[V,W]$ of any pair of vector fields $V\in T^{1,0}(N)$, $W\in T^{0,1}(N)$ would satisfy $[V,W](0)\in \mathbb C T_0(N) = \mathbb C T_0^c(M')$, contradicting the strong pseudoconvexity of $M'$.   
\end{remark}

\begin{lemma}\label{coord}
Let $M$ and $M'$ be as before and let $H:\mathbb C^2 \to \mathbb C^3$ be a local embedding of $M$ into $M'$ such that $H(0)=0$. Suppose that $H$ is transversal. Then, after a suitable holomorphic change of coordinates around $0$ in $\mathbb C^2$ (and, possibly, after composing with the map $\mathbb C^3\ni (z_1',z_2',w')\to (z_2',z_1',w')\in \mathbb C^3$), $H$ can be locally expressed  as $H(z,w)=(z, F(z,w), w)$ for a certain germ of holomorphic function $F: \mathbb C^2\to \mathbb C$ such that $F(0) = 0$.
\end{lemma}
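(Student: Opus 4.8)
The plan is to extract the linear-algebraic content of the hypotheses at the origin, use it to decide which two of the three components of $H$ should become the new source coordinates, and then invoke the inverse function theorem. Throughout, recall that an embedding is in particular an immersion, so $dH_0$ has rank $2$.

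First I would record the situation at $0$. Writing $H=(H^1,H^2,H^3)$ with $H^i(0)=0$, the differential $dH_0$ is the $3\times 2$ matrix whose $i$-th row is the gradient $R_i=(\frac{\partial H^i}{\partial z}(0),\frac{\partial H^i}{\partial w}(0))$; the immersion condition says that $R_1,R_2,R_3$ span $\mathbb{C}^2$, while transversality says precisely that $R_3\neq 0$. The key observation is then that $R_1$ and $R_2$ cannot both be scalar multiples of $R_3$: otherwise all three rows would lie in the line $\mathbb{C}R_3$ and $dH_0$ would have rank $\leq 1$. Hence at least one of $R_1,R_2$ is linearly independent from $R_3$, and since $R_3\neq 0$ in the two-dimensional space $\mathbb{C}^2$, that row together with $R_3$ is a basis. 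After possibly applying the target swap $(z_1',z_2',w')\mapsto(z_2',z_1',w')$ — which merely exchanges $H^1$ and $H^2$ and preserves the normalization $T_0(M')=\{{\rm Im}\,w'=0\}$ — I may assume that $\{R_1,R_3\}$ is a basis of $\mathbb{C}^2$.

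Next I would set $\Phi=(H^1,H^3)\colon(\mathbb{C}^2,0)\to(\mathbb{C}^2,0)$. By construction its Jacobian at $0$ is the matrix with rows $R_1,R_3$, which is now invertible, so by the inverse function theorem $\Phi$ is a germ of biholomorphism fixing $0$. I then take $\Phi^{-1}$ as the holomorphic change of source coordinates and replace $H$ by $H\circ\Phi^{-1}$. By the very definition of $\Phi$ we have $\Phi\circ\Phi^{-1}=\id$, so the first and third components of $H\circ\Phi^{-1}$ are the two coordinate functions themselves: $H^1\circ\Phi^{-1}(z,w)=z$ and $H^3\circ\Phi^{-1}(z,w)=w$. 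The remaining component $F:=H^2\circ\Phi^{-1}$ is then a germ of holomorphic function with $F(0)=H^2(0)=0$, so in the new coordinates $H(z,w)=(z,F(z,w),w)$, as claimed.

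The argument is essentially routine, and the only point requiring (minimal) care is the selection step — guaranteeing that after at most one target swap the first component's differential is independent of the third's. This is exactly where both the rank-$2$ hypothesis and the transversality hypothesis $R_3\neq 0$ enter, and it is the reason the coordinate swap in the target must be permitted in the statement; the inverse function theorem then does the rest with no further obstruction.
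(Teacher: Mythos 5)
Your proof is correct and follows essentially the same route as the paper: identify that transversality and the rank-$2$ condition force the pair $(dH^1_0,dH^3_0)$ (after possibly swapping $z_1'$ and $z_2'$) to be invertible, then apply the inverse function theorem to $(H^1,H^3)$ and compose. The only difference is that you spell out the linear-algebra selection step which the paper leaves implicit, which is a welcome clarification but not a new argument.
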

\begin{proof}
The image $\mathcal H = H(\mathbb C^2)$ of $\mathbb C^2$ is a complex hypersurface of $\mathbb C^3$ in a neighborhood of $0$, whose tangent space at $0$ is not contained in $\langle \frac{\partial}{\partial z_1'}, \frac{\partial}{\partial z_2'}\rangle$ by the transversality assumption. Writing $H=(H_1,H_2,H_3)$, the previous remark and the fact that $H$ is of rank $2$ imply (up to exchanging $z_1'$ and $z_2'$) that
\[ {\rm det}\left( \begin{array}{cc}
\frac{\partial H_1}{\partial z}(0) & \frac{\partial H_3}{\partial z}(0) \\
\frac{\partial H_1}{\partial w}(0) & \frac{\partial H_3}{\partial w}(0)
\end{array} \right) \neq 0.
\]

It follows that the map $(z,w)\to (H_1(z,w), H_3(z,w))$ admits a local inverse $\Phi$ such that $\Phi(0)=0$. The conclusion follows by composing $H$ with $\Phi$.
\end{proof}
\begin{remark}\label{compose}
In the case $M'=\mathbb H^3_{\varepsilon}$, performing the change of coordinates of the lemma above implies that $M$ is expressed as
\begin{equation} \label{oftheform}
M = \{ {\rm Im}w - |z|^2 - \varepsilon |F(z,w)|^2 = 0\}
\end{equation}
for a certain germ of holomorphic function $F: \mathbb C^2\to \mathbb C$. 

\end{remark}

 \subsection{Finitely nondegenerate maps and their jet parametrization}
 
 Denote by $L$ a CR vector field tangent to $M$ around $0$. For instance, if $M$ is written as in (\ref{oftheform}) and $F(z,w)=F(z)$ does not depend on $w$, we can choose
\[L = \frac{\partial}{\partial \overline z} - 2i(z + \varepsilon F(z)\overline{F_z(z)})\frac{\partial}{\partial \overline w}.\]
 
Let $H:\mathbb C^2\to \mathbb C^3$ be a map. 
We recall a definition first introduced in \cite{La}:

\begin{definition}\label{def:nondeg}
Given $k_0\in \mathbb N$, the map $H$ is said to be \emph{$k_0$-nondegenerate} at $0$ if, defining
\[E_k(p) = {\rm span} \left\{L^{j}\rho_{Z'}(H(Z), \overline{H(Z)})\biggr|_{Z=p} : 0\leq j \leq k \right\}, \]
we have $E_{k_0}(0) = \mathbb C^3$ and 
$E_{k_0-1}(0) \neq \mathbb C^3$. We remark that, when $M$ and $M'$ are given, the notion of $k_0$-nondegeneracy at $0$ only depends on the $k_0$-jet of $H$ at $0$. Let us remark 
that even though the definition of a $k_0$-nondegenerate map is given for an 
arbitrary map $H$, it is only invariant under biholomorphic changes of coordinates if 
$H(M)\subset M'$.
\end{definition}

\begin{remark}
It is worth remarking that if we are dealing with nondegenerate {\em embeddings} $H$,
we can also take the view of $H$ giving rise to a submanifold $H(M)\subset M'$, 
which furthermore, if $H$ is transversal, is of the form $H(M) = M' \cap V $
for some complex submanifold $V\subset \CNp$. The above definition then gives 
rise to a notion of {\em nondegenerate CR submanifold} of $M'$. We will, however,
mostly prefer to start with an embedding (because some of our results will
not be restricted to transversal embeddings).
\end{remark}


We define  
\[J_0^k = \faktor{\mathfrak{m} \mathbb C \{z,w\}^3 }{\mathfrak{m}^{k+1}}, \]
where $\mathfrak{m}=(z,w)$ is the maximal ideal, the space of the $k$-jets $\Lambda$ 
of holomorphic maps $\mathbb C^2\to \mathbb C^3$ such that $\Lambda(0)=0$, with the 
natural projection $j_0^k$.
 For a given $k$, $\Lambda$ both denotes the coordinates in $J_0^k$ and the associated polynomial map. More precisely we will write
$\Lambda = (\Lambda_1,\Lambda_2,\Lambda_3)$ with $\Lambda_j=(\Lambda_j^{m,\ell})$, $0\leq m+\ell \leq k$, where
\[ \Lambda = j_0^k H \text{ if  and only if } \Lambda_j^{m,\ell} = \frac{1}{m!\ell!} 
\frac{\partial^{m+\ell} H_j}{\partial z^m \partial w^\ell} (0),  \]
and 
\[ \Lambda (z,w) =(\Lambda_1 (z,w),\Lambda_2 (z,w),\Lambda_3 (z,w))  \]
with
\[ \Lambda_j (z,w) = \sum_{m,\ell} \Lambda_j^{m,\ell} z^m w^\ell.\]
We also define the open subset 
$\widetilde J_0^k \subset J_0^k$ 
by 
\[\widetilde J_0^k = \left \{ \Lambda\in J_0^k \colon \begin{vmatrix}
	\Lambda_1^{1,0} & \Lambda_2^{1,0} \\ \Lambda_1^{2,0} & \Lambda_2^{2,0}
\end{vmatrix}  \neq 0 ,\, \Lambda_3^{0,1} \neq 0 \right\} \]
and think of it as the  
$k$-jets of transversal, $2$-nondegenerate maps (whose jets are actually all contained 
in $\widetilde J_0^k$ provided that $M'$ is Levi-nondegenerate). 

We will denote by $\mathcal F$ the space of transversal germs of holomorphic maps $H:\mathbb C^2\to \mathbb C^3$ such that $H(0) = 0$ and $H(M)\subset M'$ and its open subset of $2$-nondegenerate 
maps by $\mathcal F_2$. 
We equip $\mathcal F$ and $\mathcal F_2$ with their natural 
 topologies as  subsets of $(\mathbb C\{z,w\})^3$.

The following result is a consequence of \cite[Proposition 25, Corollaries 26, 27]{La} combined with 
 \cite[Theorem 5]{JL}; see also \cite[Lemmas 5.7, 5.8]{Re}.

\begin{theorem}
\label{jetparam}  Let $M\subset \mathbb C^2$ be the germ of a strongly pseudoconvex, real-analytic hypersurface, $0\in M$, and let $M'\subset \mathbb C^3$ be a real-analytic hypersurface germ. 
There exists a polynomial
$q(\Lambda, \bar \Lambda)$ on $J_0^4$,
an open set $\mathcal U \subset \mathbb C^2 \times \{q\neq 0\}$ containing $(0,j_0^4 H)$
for every $H\in\mathcal F_2$, and a holomorphic map
$\Phi \colon \mathcal U \to \C^3 $ satisfying $\Phi(0,\Lambda) = 0$, which 
can be written as 
\[ \Phi (Z,\Lambda) = \sum_{\alpha\in \N^2} \frac{p_\alpha (\Lambda, \bar \Lambda)}{q(\Lambda, \bar \Lambda)^{d_\alpha}} Z^\alpha, \quad  p_\alpha, q \in \C[\Lambda, \bar \Lambda], \quad d_\alpha\in\N \] such that
\[ H(Z) = \Phi (Z,j_0^4 H), \quad H\in \mathcal F_2.\]

In particular, there exist (real) polynomials $c_j$, $j\in\N$ on $J_0^4$ such that
\[ j_0^4 \mathcal F_2 = \{ \Lambda\in J_0^4 \colon q(\Lambda, \bar \Lambda)\neq 0, \, c_j (\Lambda, \bar \Lambda) =0 \}.\]
\end{theorem}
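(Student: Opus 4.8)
The plan is to establish a jet-parametrization for the family $\mathcal F_2$, i.e.\ to show that every transversal, $2$-nondegenerate embedding $H\in\mathcal F_2$ is completely determined by its $4$-jet $j_0^4 H$ through a universal formula, and then to read off the description of $j_0^4\mathcal F_2$ as a real-algebraic (away from $\{q=0\}$) subset of $J_0^4$. First I would reduce to normal form using Lemma~\ref{coord}: after a holomorphic change of source coordinates (and possibly swapping $z_1'$ and $z_2'$), write $H(z,w)=(z,F(z,w),w)$, so that the image $\mathcal H=H(\mathbb C^2)$ is $\{z_1'=z,\ z_2'=F(z,w),\ w'=w\}$ and the whole embedding is encoded by the single function $F$. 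The embedding condition $H(M)\subset M'$ then becomes the scalar identity $\rho(z,F(z,w),w,\bar z,\overline{F(z,w)},\bar w)=0$ on $M$.

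Next I would differentiate this reflection identity along the tangent CR vector field $L$. Applying $L^j$ to $\rho(H(Z),\overline{H(Z)})=0$ and restricting to $M$ yields, for each $j$, an equation of the form $L^j\rho_{Z'}(H,\bar H)\cdot(\text{derivatives of }H)=(\text{lower order data})$; collecting these for $0\le j\le k_0$ and using $2$-nondegeneracy ($E_2(0)=\mathbb C^3$, $E_1(0)\neq\mathbb C^3$) gives a system whose coefficient matrix is invertible near $0$, precisely because the vectors $L^j\rho_{Z'}$ span $\mathbb C^3$. This is exactly the mechanism packaged in the cited results \cite{La,JL,Re}: solving this invertible linear system expresses the needed jets of $H$ in terms of $H$, $\bar H$ and their lower-order derivatives, and an induction on the order of vanishing—combined with the implicit function theorem on the reflection relation—produces a convergent expansion determining $H$ from finitely many initial derivatives. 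The nondegeneracy order being $2$ forces the number of initial derivatives needed to be controlled by the $4$-jet (two differentiations of a first-order spanning condition), which is why the parametrization is governed by $J_0^4$; the denominator $q(\Lambda,\bar\Lambda)$ is the determinant of the relevant coefficient matrix, and $\widetilde J_0^4\subset\{q\neq 0\}$ guarantees solvability.

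Having obtained the universal map $\Phi(Z,\Lambda)=\sum_\alpha \frac{p_\alpha(\Lambda,\bar\Lambda)}{q(\Lambda,\bar\Lambda)^{d_\alpha}}Z^\alpha$ with $H(Z)=\Phi(Z,j_0^4H)$ for all $H\in\mathcal F_2$, I would then extract the description of $j_0^4\mathcal F_2$. The point is that $\Phi(Z,\Lambda)$ is defined for \emph{every} $\Lambda$ in the open set $\{q\neq 0\}$, not only for genuine jets of maps in $\mathcal F_2$; a given $\Lambda$ lies in $j_0^4\mathcal F_2$ exactly when the formal series $\Phi(\cdot,\Lambda)$ really does send $M$ into $M'$. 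Substituting $\Phi(Z,\Lambda)$ into $\rho$ and expanding on $M$ produces a (real-analytic in $(\Lambda,\bar\Lambda)$) power series in $(Z,\bar Z)$ whose vanishing is the embedding condition; since $\Phi$ is rational in $(\Lambda,\bar\Lambda)$ with denominator a power of $q$, after clearing denominators each coefficient of this expansion is a real polynomial in $(\Lambda,\bar\Lambda)$. Setting these coefficients to be the polynomials $c_j$, we get $j_0^4\mathcal F_2=\{\Lambda:q(\Lambda,\bar\Lambda)\neq 0,\ c_j(\Lambda,\bar\Lambda)=0\ \forall j\}$, together with the open jet-condition $\Lambda\in\widetilde J_0^4$ encoding transversality and $2$-nondegeneracy (which is itself cut out by the non-vanishing of $q$ and of the explicit minors defining $\widetilde J_0^4$).

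The main obstacle I expect is the rationality and the uniform convergence of the parametrization: one must verify that iterating the solution of the linear system and the implicit function theorem yields coefficients that are genuinely rational in $(\Lambda,\bar\Lambda)$ with a \emph{common} controlled denominator $q^{d_\alpha}$, and that the resulting series converges on a fixed neighborhood $\mathcal U$ of $(0,j_0^4H)$ uniformly for $H\in\mathcal F_2$. This is the heart of the cited parametrization theorems, so I would invoke \cite[Proposition~25, Corollaries~26,~27]{La} together with \cite[Theorem~5]{JL} (and \cite[Lemmas~5.7,~5.8]{Re}) to supply precisely this convergent, rational jet-parametrization, and then the algebraic description of $j_0^4\mathcal F_2$ follows formally as above. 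A secondary technical point is checking that the normalizing change of coordinates in Lemma~\ref{coord} can be performed compatibly with the jet-parametrization formalism so that the statement holds for $H$ in the original coordinates; this is handled by noting that the coordinate change is itself determined algebraically by $j_0^4 H$.
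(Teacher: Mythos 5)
Your overall architecture---reflection identity, invertibility of the resulting linear system coming from $2$-nondegeneracy, iteration (two steps, because strong pseudoconvexity gives minimality) leading to determination by the $4$-jet, rationality with denominator the nondegeneracy determinant, and finally delegating the uniform convergent parametrization to \cite[Proposition 25, Corollaries 26, 27]{La} and \cite[Theorem 5]{JL}---is exactly the route the paper takes; the paper gives no independent proof of Theorem~\ref{jetparam} but cites precisely these results. However, your opening move, the reduction to $H(z,w)=(z,F(z,w),w)$ via Lemma~\ref{coord}, cannot be the first step of a \emph{universal} parametrization, and the repair you propose is incorrect: the source coordinate change is the inverse of $(z,w)\mapsto(H_1(z,w),H_3(z,w))$, which is determined by the full components $H_1,H_3$ and not by $j_0^4H$ (recovering it from the $4$-jet is equivalent to the theorem being proved, hence circular). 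Moreover this change of variables replaces the fixed germ $M$ by the $H$-dependent hypersurface $(H_1,H_3)(M)$, so the data $p_\alpha,q$ produced afterwards would depend on the unknown map rather than only on $(\Lambda,\bar\Lambda)$. The cited parametrization results apply to a general nondegenerate $H$ in the fixed coordinates, so this reduction should simply be dropped.

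There is a second gap, in the ``in particular'' part. The conditions $c_j=0$ you extract are the coefficients of the expansion of $\rho\bigl(\Phi(Z,\Lambda),\overline{\Phi(Z,\Lambda)}\bigr)$ along $M$, i.e.\ they express only that $\Phi(\cdot,\Lambda)$ maps $M$ into $M'$. This does not suffice for the inclusion $\{q\neq 0,\ c_j=0\}\subset j_0^4\mathcal F_2$: the candidate map $H=\Phi(\cdot,\Lambda)$ could be a genuine element of $\mathcal F_2$ whose $4$-jet is some $\Lambda'\neq\Lambda$, and then $\Lambda\notin j_0^4\mathcal F_2$ (any $\widetilde H\in\mathcal F_2$ with $j_0^4\widetilde H=\Lambda$ would have to equal $\Phi(\cdot,\Lambda)$ by the parametrization identity, contradicting $j_0^4\Phi(\cdot,\Lambda)\neq\Lambda$). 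One must therefore also include among the $c_j$ the conditions---rational in $(\Lambda,\bar\Lambda)$, polynomial after clearing powers of $q$---expressing the compatibility $j_0^4\Phi(\cdot,\Lambda)=\Lambda$; once these hold, transversality and $2$-nondegeneracy of $\Phi(\cdot,\Lambda)$ do follow from the open conditions on the jet $\Lambda$ itself. This is precisely the role of condition (ii) in the paper's linearized analogue, Proposition~\ref{linjetparam}, and of the corresponding compatibility equations in \cite{La}. Both gaps are repairable, but as written your route is circular at the first step and your description of $j_0^4\mathcal F_2$ is incomplete.
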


\subsection{Isotropy group} \label{isogro}

We will denote by ${\rm Aut}_0 (M)$ the group of the germs $\sigma$ of biholomorphic maps $\mathbb C^2\to \mathbb C^2$, defined around $0$, such that $\sigma(0) = 0$ and $\sigma(M)\subset M$. We define the group ${\rm Aut}_0(M')$ in an analogous way. Set $G=  {\rm Aut}_0 (M) \times {\rm Aut}_0 (M') $; we will refer to $G$ as the \emph{isotropy group}. The group $G$ acts on $\mathcal F$ in the following way: given $(\sigma, \sigma')\in G$ we define a map $\mathcal F \to \mathcal F$ by
\[\mathcal F \ni H \to \sigma' \circ H \circ \sigma^{-1} \in \mathcal F.\]
Note that the notion of $k$-nondegeneracy is invariant under holomorphic changes of coordinates (see 
\cite[Lemma 14]{La}), hence for any $H\in \mathcal F_2$ we have that $\sigma' \circ H \circ \sigma^{-1}$ is  an element of
 $\mathcal F_2$, and the action of $G$ therefore restricts onto $\mathcal F_2$.

 Our general aim is to study the structure of the quotient $\mathcal N = \faktor{\mathcal F_2}{G}$ 
under this group action, which we endow with its natural topology as a quotient space. 
More precisely, we want to study the following property:
\begin{definition}\label{locrig}
Let $M$ and $M'$ be germs of hypersurfaces in $\mathbb C^2$ (resp. $\mathbb C^3$) around $0$, 
and let $H\in \mathcal F$ be a transversal embedding of $M$ into $M'$. 
We say that $H$ is \emph{locally rigid} if $H$ projects to an isolated point in the 
quotient $\faktor{\mathcal F}{G}$.
\end{definition}

\begin{remark}\label{rem:equcon}
 $H:M\to M'$ is locally rigid according to the definition above if and only if  there exists a neighborhood $U$ of $H$ in $(\mathbb C\{z,w\})^3$ such that for every $\hat H\in \mathcal F\cap U$ there is $g\in G$ such that $\hat H=g H$. In other words, $H$ is locally rigid if and 
 only if all the maps in $\mathcal F$ which are close enough to $H$ are equivalent to $H$. 

Indeed, by the definition of the quotient topology, $H$ projecting to an isolated point in the quotient amounts to the existence of a $G$-invariant neighborhood $\mathcal U$ of $H$ in $(\mathbb C\{z,w\})^3$ such that $\mathcal F\cap \mathcal U = G\cdot H$ (where we denote by $G\cdot H$ the orbit of $G$ through $H$), and thus local rigidity is in principle a stronger property. On the other hand, let $U$ be any neighborhood of $H$ such that $\mathcal F \cap U = (G\cdot H)\cap U$: since $G$ acts on $(\mathbb C\{z,w\})^3$ by homeomorphisms, the saturation $\mathcal U=\bigcup_{g\in G} gU$ of $U$ is a ($G$-invariant) open set. Given any $\hat H\in \mathcal F\cap \mathcal U$, we have $\hat H\in \hat g U$ for some $\hat g\in G$, which implies that $\hat g^{-1} \hat H\in \mathcal F\cap U$, hence by assumption $\hat g^{-1}\hat H\in G\cdot H$. It follows that in turn $\hat H\in G\cdot H$, which proves the equivalence of the two definitions.

\end{remark}

Because of the aforementioned parametrization results, we will also be interested in the action of $G$ on the jet-space $J_0^4$. 
 Recall that we are
treating an element $\Lambda$ of $J_0^4$ as a polynomial map $\mathbb C^2\to \mathbb C^3$; we  define the action of $(\sigma,\sigma')\in G$ on  $J_0^4$ by
\[J_0^4 \ni \Lambda \to j_0^4 (\sigma' \circ \Lambda \circ \sigma^{-1}) \in J_0^4. \]
If $M$ is strictly pseudoconvex and  $M'$ is Levi-nondegenerate, one can check that this action
 preserves the space $\widetilde J_0^4$.

It is a classical fact (see \cite{CM}) that ${\rm Aut_0}(M)$ and ${\rm Aut_0}(M')$ are finite dimensional Lie groups as long as $M,M'$ are Levi-nondegenerate. Furthermore, if $M,M'$ are strongly pseudoconvex then ${\rm Aut_0(M)}$ (resp. ${\rm Aut_0}(M')$) is always compact unless $M \cong \mathbb H^2 = \{{\rm Im} w - |z|^2 = 0\}$ (resp. $M'\cong \mathbb H^3_+ = \{{\rm Im} w' - |z_1'|^2 - |z_2'|^2 = 0\}$), see \cite{BV}. If $M'$ is Levi-nondegenerate but the eigenvalues of its Levi form are of opposite signs, ${\rm Aut_0}(M')$ is isomorphic to a subgroup of $U(1,1)$ unless $M'\cong \mathbb H^3_- = \{{\rm Im} w' - |z_1'|^2 + |z_2'|^2 = 0\}$, see \cite{Ez}.
Since we are especially interested in the spherical case, we take a closer look at the groups ${\rm Aut_0}(\mathbb H^2),{\rm Aut_0}(\mathbb H^3_\pm)$, which are well-known: the following explicit description is borrowed from \cite{Re}.

Consider $\Gamma = \mathbb R^+ \times \mathbb R \times \mathbb S^1 \times \mathbb C$ as a parameter space. The map
\begin{equation}\label{paramh2}
\Gamma \ni \gamma =  (\lambda, r, u, c) \to \sigma_{\gamma}(z,w) = \frac{(\lambda u (z + c w), \lambda^2 w) }{1-2i \overline c z+ (r - i |c|^2 )w} \in {\rm Aut_0}(\mathbb H^2)
\end{equation}
is a diffeomorphism between $\Gamma$ and ${\rm Aut_0}(\mathbb H^2)$, where $u\in \mathbb S^1=\{e^{it}:t\in \mathbb R\}\subset\mathbb C$.

Similarly, consider $\Gamma'_\varepsilon = \mathbb R^+ \times \mathbb R  \times U_\varepsilon \times \mathbb C^2$ as a parameter space, where $U_\varepsilon = U(2)$ if $\varepsilon=+1$ and $U_\varepsilon=U(1,1)$ if $\varepsilon=-1$. Then the map
\begin{equation}\label{param}
\Gamma'_\varepsilon \ni \gamma' =  (\lambda', r', U', c') \to \sigma'_{\gamma'}(z',w') = \frac{(\lambda' U' \ {}^t(z' + c' w'), \lambda'^2 w') }{1-2i \langle \overline c', z' \rangle_\varepsilon + (r' - i \|c'\|^2_\varepsilon )w'} \in {\rm Aut_0}(\mathbb H^3_\varepsilon)
\end{equation}
is a diffeomorphism between $\Gamma'_\varepsilon$ and ${\rm Aut_0}(\mathbb H^3_\varepsilon)$: here we denote by $\langle \cdot, \cdot \rangle_\varepsilon$ the product on $\mathbb C^2$ given by $\langle z', \widetilde z' \rangle_\varepsilon= z_1'\widetilde z_1' + \varepsilon z_2'\widetilde z_2'$ and we write $\| z' \|_\varepsilon^2 = \langle \overline z', z' \rangle_\varepsilon$.

The Lie algebra $\mathfrak{hol}_0(\mathbb H^3_\pm)$ of ${\rm Aut}_0(\mathbb H^3_\pm)$ is given by the infinitesimal automorphisms of $\mathbb H^3_\pm$ which vanish at $0$, i.e.\ by the holomorphic vector fields $Z$, defined in a neighborhood of $0$ in $\mathbb C^3$, such that $Z(0)=0$ and ${\rm Re}Z$ is tangent to $\mathbb H^3_\pm$. For instance, a parametrization of $\mathfrak{hol}_0(\mathbb H^3)$ is given by
\[ \left ( (t+ih_{11})  z_1' + h_{12}z_2'  + i\frac{\overline b_1}{2}w' +  b_1 {z_1'}^2 + b_2 z_1' z_2'  + s z_1' w'\right )\frac{\partial}{\partial z_1'} + \]
\[ + \left( (t+ih_{22}) z_2' - \overline h_{12}z_1' +  i\frac{\overline b_2}{2}w' + b_1 z_1' z_2' + b_2 {z_2'}^2 + s z_2' w'\right ) \frac{\partial}{\partial z_2'} + \]
\[ +  (2tw' + b_1 z_1' w' + b_2 z_2' w'  + s {w'}^2)\frac{\partial}{\partial w'},\]
where $(t,h_{11},h_{22},s,h_{12},b_1,b_2)$ belong to the parameter space $\mathbb R^4\times \mathbb C^3$.

\section{The action of the isotropy group on the jet space}\label{sec:action}

In this section we study the properties (properness and freeness) of the action of the isotropy group $G = {\rm Aut}_0(M) \times {\rm Aut}_0(M')$ on the jet space $\widetilde J^4_0$, where we assume both $M$ and $M'$ to be Levi-nondegenerate. This allows to recover the corresponding properties for the action of the isotropy group on the space of maps $\mathcal F_2$, since the two actions are conjugated (cf. Lemma \ref{Ginv}). This study has been carried out in \cite{Re} for the case $M = \mathbb H^2$, $M' = \mathbb H_\pm^3$ (which is the most difficult one), hence we will suppose that not both $M$ and $M'$ are biholomorphic to a hyperquadric. 

For technical reasons we need to restrict the action of $G$ to a particular subset of $\widetilde J_0^4$. For $\varepsilon\in \{-1,1\}$, let $E_\varepsilon$ be the subset of $\widetilde J_0^4$ defined by  
\[E_\varepsilon=\{\Lambda_3^{1,0}=\Lambda_3^{2,0} = 0,\  \Lambda_3^{0,1} = |\Lambda_1^{1,0}|^2 + \varepsilon |\Lambda_2^{1,0}|^2  \}.\]
One can easily see that $E_\varepsilon$ is a (real algebraic) submanifold of $\widetilde J_0^4$.

\begin{remark}\label{furthermore}
If coordinates are chosen such that $M=\{{\rm Im}\, w = |z|^2 +O(2)\}$, $M'=\{{\rm Im}\, w' = |z_1'|^2 + \varepsilon |z_2'|^2 + O(2)\}$, a straightforward computation shows that $E_\varepsilon$ contains the $4$-th jet of any map of $\mathcal F_2$.
\end{remark}

\begin{lemma}
The submanifold $E_\varepsilon$ is invariant under the action of $G_\varepsilon= {\rm Aut}_0(\mathbb H^2) \times {\rm Aut}_0(\mathbb H^3_\varepsilon)$.
\end{lemma}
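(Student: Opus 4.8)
The plan is to give an invariant reformulation of the three equations cutting out $E_\varepsilon$, from which $G_\varepsilon$-invariance becomes transparent. Writing $\rho' = {\rm Im}\, w' - |z_1'|^2 - \varepsilon|z_2'|^2$ for the defining function of $\mathbb{H}^3_\varepsilon$ and treating $\Lambda \in \widetilde J_0^4$ as a polynomial map $\mathbb{C}^2 \to \mathbb{C}^3$, I would consider the residual $r_\Lambda(Z) = \rho'(\Lambda(Z), \overline{\Lambda(Z)})$ restricted to $\mathbb{H}^2 = \{{\rm Im}\, w = |z|^2\}$, parametrized by $(z, \bar z, u)$ via $w = u + i|z|^2$. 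Assigning the weights ${\rm wt}(z) = {\rm wt}(\bar z) = 1$ and ${\rm wt}(u) = 2$, a direct expansion (exactly the one behind Remark \ref{furthermore}) shows that the weighted-order-$\le 2$ part of $r_\Lambda|_{\mathbb{H}^2}$ vanishes precisely when $\Lambda_3^{1,0} = 0$ (from weight $1$) and $\Lambda_3^{2,0} = 0$ together with $\Lambda_3^{0,1} = |\Lambda_1^{1,0}|^2 + \varepsilon|\Lambda_2^{1,0}|^2$ (from weight $2$, the reality of $\Lambda_3^{0,1}$ being subsumed); that is, exactly when $\Lambda \in E_\varepsilon$. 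Thus membership in $E_\varepsilon$ is equivalent to the vanishing of the low-weight part of the mapping residual.

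Next I would record the transformation law of this residual under $G_\varepsilon$. Fix $(\sigma, \sigma') \in G_\varepsilon$ and set $\tilde H = \sigma' \circ \Lambda \circ \sigma^{-1}$, so that $\tilde\Lambda = j_0^4 \tilde H$. Since $\sigma'$ preserves $\mathbb{H}^3_\varepsilon = \{\rho' = 0\}$, there is a real-analytic germ $a$ with $a(0) \ne 0$ such that $\rho' \circ \sigma' = a\, \rho'$; substituting $\Lambda(\sigma^{-1}Z)$ gives $r_{\tilde H}(Z) = a(\Lambda(\sigma^{-1}Z), \overline{\Lambda(\sigma^{-1}Z)})\, r_\Lambda(\sigma^{-1}Z)$. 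Because $\sigma \in {\rm Aut}_0(\mathbb{H}^2)$ maps $\mathbb{H}^2$ to itself and fixes $0$, restricting this identity to $\mathbb{H}^2$ yields $r_{\tilde H}|_{\mathbb{H}^2} = a \cdot (r_\Lambda|_{\mathbb{H}^2}) \circ \sigma^{-1}$, where $a$ is evaluated along $Z\mapsto \Lambda(\sigma^{-1}Z)$ and hence is a nonvanishing germ at $0$.

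Finally I would run the weight bookkeeping. From the explicit parametrizations \eqref{paramh2} and \eqref{param} one sees that $\sigma$ and $\sigma'$ respect the weighted filtration and act on the lowest-weight part by an invertible weighted dilation and rotation; hence $\sigma^{-1}$ neither lowers weighted orders nor alters the weighted-order-$\le 2$ part beyond an invertible linear substitution, and multiplication by the unit $a$ likewise cannot lower weighted order. Therefore, if $\Lambda \in E_\varepsilon$, so that $r_\Lambda|_{\mathbb{H}^2}$ has vanishing weighted-order-$\le 2$ part, the same holds for $r_{\tilde H}|_{\mathbb{H}^2}$; and since the weighted-order-$\le 2$ part of the residual depends only on the $4$-jet, it holds for $r_{\tilde\Lambda}|_{\mathbb{H}^2}$ as well. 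By the equivalence of the first paragraph this gives $\tilde\Lambda \in E_\varepsilon$ (note $\tilde\Lambda \in \widetilde J_0^4$, since this set is already known to be $G_\varepsilon$-invariant), proving the claim. I expect the only delicate point to be the weight bookkeeping of this last step — in particular checking that the automorphisms preserve the weighted filtration so that composition with $\sigma^{-1}$ cannot decrease weighted order; a fully computational alternative is to substitute \eqref{paramh2}–\eqref{param} directly and verify the three defining relations of $E_\varepsilon$ by hand, which is elementary but longer.
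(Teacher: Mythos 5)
Your proof is correct, but it follows a genuinely different route from the paper's. The paper argues by direct computation: using the parametrizations \eqref{paramh2} and \eqref{param}, it derives the explicit transformation laws $\widetilde\Lambda_3^{1,0} = u\lambda{\lambda'}^2\Lambda_3^{1,0}$, $\widetilde\Lambda_3^{2,0} = {\lambda'}^2(\lambda^2u^2\Lambda_3^{2,0} - 4i\lambda u\overline{c}\,\Lambda_3^{1,0})$, $\widetilde\Lambda_3^{0,1} = {\lambda'}^2\lambda^2\Lambda_3^{0,1}$, and ${}^t(\widetilde\Lambda_1^{1,0},\widetilde\Lambda_2^{1,0}) = \lambda\lambda' u U'\,{}^t(\Lambda_1^{1,0},\Lambda_2^{1,0})$, and verifies the three defining equations of $E_\varepsilon$ from these, the last one because $uU'$ preserves $\|\cdot\|_\varepsilon$; this is exactly the ``fully computational alternative'' you mention at the end. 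Your argument instead recasts $E_\varepsilon$ invariantly---as the set of jets whose residual $\rho'(\Lambda,\overline\Lambda)$ restricted to $\mathbb H^2$ vanishes to weighted order $>2$---and deduces invariance from the transformation law $\rho'\circ\sigma' = a\,\rho'$ together with the fact that isotropic automorphisms of $\mathbb H^2$ preserve the Chern--Moser weighted filtration. All steps check out: your equivalence in the first paragraph is right (the weight-$1$ part gives $\Lambda_3^{1,0}=0$; the weight-$2$ part decomposes over the independent monomials $u$, $z\bar z$, $z^2$, $\bar z^2$ and gives reality of $\Lambda_3^{0,1}$, the identity $\Lambda_3^{0,1}=|\Lambda_1^{1,0}|^2+\varepsilon|\Lambda_2^{1,0}|^2$, and $\Lambda_3^{2,0}=0$), and the filtration point you flag as delicate does hold, since both components of any $\sigma_\gamma$ in \eqref{paramh2} are units times $\lambda u(z+cw)$ and $\lambda^2 w$ respectively, so neither substitution nor multiplication by the unit $a$ can lower weighted order on $\mathbb H^2$. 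As for what each approach buys: the paper's computation is shorter here and, more importantly, the displayed transformation formulas are reused essentially verbatim in the properness arguments of Lemmas \ref{proact} and \ref{proact2}, so the paper gets this lemma as a byproduct of work it needs anyway; your argument explains conceptually why $E_\varepsilon$ is distinguished (it is the jet incarnation of a biholomorphically meaningful vanishing condition, in the spirit of Remark \ref{furthermore}) and would extend with no change to higher dimensions or to analogous higher-weight conditions, where the explicit coefficient computation becomes unwieldy.
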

\begin{proof}
Indeed, a computation shows that for any $g=(\sigma_{(\lambda, r, u, c)}, \sigma'_{(\lambda', r', U', c')})\in G_\varepsilon$, and any $\Lambda\in E_{\varepsilon}$ we have, putting $\widetilde \Lambda= g \Lambda$,
\begin{align*}
\widetilde \Lambda_3^{1,0} & = u\lambda{\lambda'}^2 \Lambda_3^{1,0}=0,\\
\widetilde \Lambda_3^{2,0} & = {\lambda'}^2(\lambda^2 u^2\Lambda_3^{2,0}-4i\lambda u \overline c \Lambda_3^{1,0})=0,\\
\widetilde \Lambda_3^{0,1} & = {\lambda'}^2\lambda^2 \Lambda_3^{0,1}, \ \ {}^t(\widetilde \Lambda_1^{1,0}, \widetilde \Lambda_2^{1,0}) = \lambda\lambda' u U' \ {}^t(\Lambda_1^{1,0}, \Lambda_2^{1,0}) \Rightarrow \\
\Rightarrow \widetilde \Lambda_3^{0,1} - \| {}^t(\widetilde \Lambda_1^{1,0}, \widetilde \Lambda_2^{1,0})\|_\varepsilon^2 & = {\lambda'}^2\lambda^2 (\Lambda_3^{0,1} - \|u U' \ {}^t(\Lambda_1^{1,0}, \Lambda_2^{1,0})\|_\varepsilon^2) \\
& = {\lambda'}^2\lambda^2 (\Lambda_3^{0,1} - \|\ {}^t(\Lambda_1^{1,0}, \Lambda_2^{1,0})\|_\varepsilon^2)=0,
\end{align*}
hence $\widetilde \Lambda\in E_\varepsilon$.
\end{proof}

We recall that the action of a topological group $\mathcal G$ on a space $X$ is called \emph{proper} if the map $\mathcal G\times X \ni (g,x) \to (x,gx)\in X\times X$ is proper.

\begin{lemma}\label{proact}
Suppose that $M\not \cong \mathbb H^2$ is strongly pseudoconvex and $M'=\mathbb H^3_\varepsilon$. Then the action of $G =  {\rm Aut}_0 (M) \times{\rm Aut}_0 (\mathbb H^3_\varepsilon)$ on $E_\varepsilon$ is proper.
\end{lemma}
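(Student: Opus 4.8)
The plan is to verify properness via the standard sequential criterion: it suffices to show that for any compact subsets $C,C'\subset E_\varepsilon$, every sequence $(g_n,\Lambda_n)$ with $\Lambda_n\in C$ and $g_n\Lambda_n\in C'$ admits a subsequence along which $g_n$ converges in $G$. Write $g_n=(\sigma_n,\sigma'_n)$ with $\sigma_n\in{\rm Aut}_0(M)$ and $\sigma'_n=\sigma'_{(\lambda'_n,r'_n,U'_n,c'_n)}\in{\rm Aut}_0(\mathbb{H}^3_\varepsilon)$. Since $M$ is strongly pseudoconvex and $M\not\cong\mathbb{H}^2$, the group ${\rm Aut}_0(M)$ is compact by \cite{BV}, so after passing to a subsequence we may assume $\sigma_n\to\sigma$ and $\Lambda_n\to\Lambda^*\in C$. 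Setting $\mu_n:=(\sigma_n,{\rm id})\Lambda_n=j_0^4(\Lambda_n\circ\sigma_n^{-1})\in E_\varepsilon$, continuity of the action gives $\mu_n\to\mu^*=(\sigma,{\rm id})\Lambda^*\in E_\varepsilon\subset\widetilde J_0^4$; in particular $(\mu^*)_3^{0,1}\neq 0$ and $(\mu^*)^{1,0}_1,(\mu^*)^{1,0}_2$ are not both zero. Writing $y_n:=g_n\Lambda_n=\sigma'_n(\mu_n)\to y\in E_\varepsilon$, the problem is reduced to showing that the target parameters $(\lambda'_n,r'_n,U'_n,c'_n)$ stay in a compact subset of $\Gamma'_\varepsilon$.

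To extract the parameters I would read off successive jet coefficients from the explicit postcomposition formula \eqref{param}, exactly as in the computation proving the invariance of $E_\varepsilon$. From $y_{n,3}^{0,1}={\lambda'_n}^2\mu_{n,3}^{0,1}$, with both sides converging to nonzero limits, the numbers ${\lambda'_n}^2$ tend to a finite positive limit, so $\lambda'_n$ is bounded and bounded away from $0$. Next, the $1$-jet of the first two components transforms by ${}^t(y_{n,1}^{1,0},y_{n,2}^{1,0})=\lambda'_n U'_n\,v_n$, where $v_n:={}^t(\mu_{n,1}^{1,0},\mu_{n,2}^{1,0})\to v^*$; hence $U'_n v_n$ stays bounded. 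When $\varepsilon=+1$ this already confines $U'_n$ to the compact group $U(2)$. The delicate case is $\varepsilon=-1$, where $U'_n\in U(1,1)$ is noncompact; here the defining relation of $E_\varepsilon$ is essential, since it gives $\|v^*\|_\varepsilon^2=(\mu^*)_3^{0,1}\neq 0$, so that $v^*$ is not a null vector of the form $\langle\,\cdot\,,\cdot\,\rangle_\varepsilon$.

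Granting this, I would argue via the Cartan ($KAK$) decomposition $U'_n=k_n\exp(t_nH)k'_n$ with $k_n,k'_n$ in a compact set and $\exp(t_nH)$ a boost: if $U'_n$ were unbounded then $|t_n|\to\infty$ along a subsequence, and in lightcone coordinates $\exp(t_nH)$ expands one null direction while contracting the other. As $\langle\,\cdot\,,\cdot\,\rangle_\varepsilon$ is $U(1,1)$-invariant, $k'_n v_n$ subconverges to a non-null vector, which therefore has a nonzero component along the expanding null line; this forces $\|U'_n v_n\|\to\infty$, contradicting boundedness. Hence $U'_n$ is bounded, and along a further subsequence $U'_n\to U'$ with $U'$ invertible, so $(U'_n)^{-1}$ is bounded. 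With $\lambda'_n,U'_n$ controlled, I would recover $c'_n$ from the coefficient of $w$ in the first two components, $y_{n,j}^{0,1}=\lambda'_n U'_n\big({}^t(\mu_{n,1}^{0,1},\mu_{n,2}^{0,1})+c'_n\mu_{n,3}^{0,1}\big)$ for $j=1,2$: solving for $c'_n$ using the bounded inverse of $\lambda'_n U'_n$ and $\mu_{n,3}^{0,1}\to\,\neq 0$ shows $c'_n$ is bounded. Finally $r'_n$ enters linearly in the coefficient $y_{n,3}^{0,2}$ of $w^2$ in the third component, with factor $\pm{\lambda'_n}^2(\mu_{n,3}^{0,1})^2$ bounded away from $0$ and all remaining terms bounded once $c'_n$ is; hence $r'_n$ is bounded.

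Combining these bounds, $(\lambda'_n,r'_n,U'_n,c'_n)$ lies in a compact subset of $\Gamma'_\varepsilon$, so $\sigma'_n$ subconverges in ${\rm Aut}_0(\mathbb{H}^3_\varepsilon)$; together with $\sigma_n\to\sigma$ this yields a convergent subsequence of $g_n$, proving properness. I expect the main obstacle to be precisely the $\varepsilon=-1$ step controlling $U'_n\in U(1,1)$: this is where both hypotheses of the lemma are genuinely used, namely the compactness of ${\rm Aut}_0(M)$ (which fails exactly in the excluded case $M\cong\mathbb{H}^2$) to dispose of the source automorphisms, and the defining equation of $E_\varepsilon$, which guarantees that the relevant $1$-jet vector is non-null so that its $U(1,1)$-orbit meets bounded sets in compact sets.
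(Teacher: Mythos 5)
Your proof is correct, and its overall skeleton (sequential properness criterion, disposing of the source factor via compactness of ${\rm Aut}_0(M)$ for $M\not\cong\mathbb H^2$, then reading off $\lambda'_n$, $U'_n$, $c'_n$, $r'_n$ in that order from successive jet coefficients of \eqref{param}) coincides with the paper's; the bounds you derive for $\lambda'_n$, $c'_n$ and $r'_n$ are exactly the paper's equations. Where you genuinely diverge is at the crux, the control of $U'_n$ when $\varepsilon=-1$. The paper stays entirely inside elementary linear algebra: it writes $U'_n\in U_\varepsilon$ explicitly in the form
$\left(\begin{smallmatrix} u'_n a_{1,n} & -\varepsilon u'_n a_{2,n}\\ \overline a_{2,n} & \overline a_{1,n}\end{smallmatrix}\right)$ with $|u'_n|=1$, $|a_{1,n}|^2+\varepsilon|a_{2,n}|^2=1$, conjugates the second row of \eqref{firsttwofirst2}, and thereby converts the equation into a linear system \emph{in the unknown matrix entries} $(a_{1,n},a_{2,n})$ whose coefficient matrix has determinant $u'_n\bigl(|(\Lambda_n)_1^{1,0}|^2+\varepsilon|(\Lambda_n)_2^{1,0}|^2\bigr)=u'_n(\Lambda_n)_3^{0,1}$, bounded away from zero precisely by the defining relation of $E_\varepsilon$; Cramer's rule then bounds $(a_{1,n},a_{2,n})$, and the cases $\varepsilon=\pm1$ are handled by one and the same computation. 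You instead invoke the Cartan decomposition $U'_n=k_n\exp(t_nH)k'_n$ of $U(1,1)$ and argue that a boost applied to a vector that is non-null for $\langle\cdot,\cdot\rangle_\varepsilon$ (non-nullness again coming from the $E_\varepsilon$ relation, since a non-null vector has nonzero components along both null eigendirections of the boost) must blow up in Euclidean norm, contradicting boundedness of $U'_nv_n$. Both arguments pivot on the identical geometric fact — the relevant $1$-jet vector has $\varepsilon$-norm equal to $\Lambda_3^{0,1}\neq 0$ — so they are close in spirit; but yours is the more conceptual, Lie-theoretic route and would transfer essentially verbatim to targets with $U(n,m)$ isotropy in higher dimensions, where an explicit entrywise parametrization becomes unwieldy, while the paper's trick is more self-contained, quantitative, and uniform in $\varepsilon$. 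One shared (and equally implicit) point in both arguments: replacing $\Lambda_n$ by its precomposition with $\sigma_n^{\pm1}$ is assumed to stay in $E_\varepsilon$, i.e.\ invariance of $E_\varepsilon$ under the ${\rm Aut}_0(M)$ factor, which the paper's invariance lemma states only for $M=\mathbb H^2$; this is true in normalized coordinates but is not spelled out in either proof.
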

\begin{proof}
It is sufficient to show the following:  let $N>1$, and let $\{(\widehat\Lambda_n,\widetilde\Lambda_n)\}_{n\in\mathbb N} \subset E_\varepsilon \times E_\varepsilon$, $\{g_n = (\sigma_n,\sigma'_n)\}_{n\in \mathbb N} \subset G$ be sequences such that $|\widehat\Lambda_n|, |\widetilde \Lambda_n|\leq N$, $|(\widehat\Lambda_n)_3^{0,1}|, |(\widetilde \Lambda_n)_3^{0,1}|\geq 1/N$  and $\widetilde\Lambda_n = g_n \widehat\Lambda_n$ for all $n\in \mathbb N$. Then $\{g_n\}$ admits a convergent subsequence. 

As already noted, the assumption that $M\not\cong \mathbb H^2$ implies that ${\rm Aut}_0(M)$ is a compact Lie group. It follows that, defining $\Lambda_n = \widehat\Lambda_n \circ \sigma_n$, we still have $|\Lambda_n|\leq N$ and $|(\Lambda_n)_3^{0,1}|\geq 1/N$ for all $n\in \mathbb N$ (where we choose a possibly larger $N>0$) and $\widetilde \Lambda_n = \sigma'_n \circ \Lambda_n$. Thus, what we need to prove is that the sequence $\{\sigma'_n\}$ is relatively compact in ${\rm Aut}_0(\mathbb H^3_\varepsilon)$. Using the parametrization (\ref{param}), it is enough to show that the preimage $\{\gamma'_n=(\lambda'_n, r'_n, U'_n, c'_n)\}$ of $\{\sigma'_n\}$ in the parameter space $\Gamma'_\varepsilon$ is relatively compact.

We first look at the third component of the first jet, obtaining that
\[{\lambda'_n}^2(\Lambda_n)_3^{0,1} = (\widetilde \Lambda_n)_3^{0,1},\]
and thus $\lambda'_n$ is bounded.
Looking at the first two components of the first jet we get the following equation
\begin{equation}\label{firsttwofirst2}
\lambda_n' U_n' \ {}^t((\Lambda_n)_1^{1,0}, (\Lambda_n)_2^{1,0}) = {}^t((\widetilde \Lambda_n)_1^{1,0}, (\widetilde \Lambda_n)_2^{1,0}).
\end{equation}
If we write $U'_n\in U_\varepsilon$ as 
\[U'_n = \left( \begin{array}{cc} u'_n a_{1,n} &  - \varepsilon u'_n a_{2,n} \\  \overline a_{2,n} & \overline a_{1,n} \end{array} \right) \ {\rm with}\  |u'_n|=1, \ |a_{1,n}|^2\ + \varepsilon |a_{2,n}|^2=1,\]
conjugating the second component of \eqref{firsttwofirst2} we can rewrite it as 
\[\lambda_n'  \left( \begin{array}{cc} u'_n  (\Lambda_n)_1^{1,0} & - \varepsilon   u'_n  (\Lambda_n)_2^{1,0} \\ ( \overline \Lambda_n)_2^{1,0} & (\overline \Lambda_n)_1^{1,0} \end{array} \right) \left(\begin{array}{c} a_{1,n}  \\  a_{2,n} \end{array} \right) =   \left(\begin{array}{c}  (\widetilde \Lambda_n)_1^{1,0} \\  (\overline{\widetilde \Lambda}_n)_2^{1,0}\end{array} \right).\]
Since $\Lambda_n \in E_\varepsilon$ the determinant of the matrix on the left-hand side is equal to
\[u_n' (|(\Lambda_n)_1^{1,0}|^2 +\varepsilon |(\Lambda_n)_2^{1,0}|^2) = u_n' (\Lambda_n)_3^{0,1},\]
and since $|(\Lambda_n)_3^{0,1}|\geq1/N$ we conclude that the sequence $(a_{1,n},a_{2,n})$ is bounded in $\mathbb C^2$.

Using the information gained up to now, we see from the first two components of the first jet
\[\lambda_n' U_n' \ {}^t((\Lambda_n)_1^{0,1}, (\Lambda_n)_2^{0,1}) +(\Lambda_n)_3^{0,1}\ {}^tc_n' = {}^t((\widetilde \Lambda_n)_1^{0,1}, (\widetilde \Lambda_n)_2^{0,1})\]
that $c_n'$ is bounded in $\mathbb C^2$.  Finally, we consider the third component of the second jet, which gives us the equation
\[(\widetilde \Lambda_n)_3^{0,2} = -{\lambda'}_n^4 (\Lambda_n)_3^{0,1} r'_n + R_n,\]
where $R_n$ is a polynomial expression in the second jet of $\Lambda_n$, in $\lambda'_n$ and in the coefficients of $c'_n$ and $U'_n$ (but which does not depend on $r'_n$). This shows that the sequence $r'_n$ is bounded in $\mathbb R$, and concludes the proof.
\end{proof}

Next, we consider the case $M=\mathbb H^2$. Here the proof can be carried out along broadly  analogous lines, but the computations do not actually reduce to the ones performed in the previous lemma.

\begin{lemma}\label{proact2}
The action of ${\rm Aut}_0 (\mathbb H^2) \times U_\varepsilon$ on $E_\varepsilon$ is proper.
\end{lemma}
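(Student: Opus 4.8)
The plan is to follow the same scheme as in Lemma~\ref{proact}, using the properness criterion recalled there: it suffices to show that for every $N>1$ and all sequences $\{(\widehat\Lambda_n,\widetilde\Lambda_n)\}\subset E_\varepsilon\times E_\varepsilon$ with $|\widehat\Lambda_n|,|\widetilde\Lambda_n|\leq N$ and $|(\widehat\Lambda_n)_3^{0,1}|,|(\widetilde\Lambda_n)_3^{0,1}|\geq 1/N$, together with $g_n=(\sigma_n,U_n')\in {\rm Aut}_0(\mathbb H^2)\times U_\varepsilon$ satisfying $\widetilde\Lambda_n=g_n\widehat\Lambda_n$, the sequence $\{g_n\}$ is relatively compact. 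The crucial new difficulty, compared with Lemma~\ref{proact}, is that ${\rm Aut}_0(\mathbb H^2)$ is no longer compact, so I cannot absorb $\sigma_n$ into $\widehat\Lambda_n$; instead I would extract the boundedness of its parameters directly from the jet relations. Writing $\tau_n=\sigma_n^{-1}$ and parametrizing it by \eqref{paramh2} as $(\lambda_n,r_n,u_n,c_n)$ (relative compactness of $\{\sigma_n\}$ being equivalent to that of $\{\tau_n\}$), and noting that $u_n\in\mathbb S^1$ is automatically bounded, the task reduces to bounding $\lambda_n$ (above and away from $0$), $c_n$, $r_n$, and the entries of $U_n'$. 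Here $U_\varepsilon$ is regarded inside ${\rm Aut}_0(\mathbb H^3_\varepsilon)$ via \eqref{param} with $(\lambda',r',c')=(1,0,0)$, so that $\sigma_n'$ acts linearly and fixes the coordinate $w'$.

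I would then expand $\widetilde\Lambda_n=j_0^4(\sigma_n'\circ\widehat\Lambda_n\circ\tau_n)$ by the chain rule and read off low-order coefficients in order, each step feeding the next; the Jacobian of $\tau_n$ at $0$ is $\left(\begin{smallmatrix}\lambda_n u_n & \lambda_n u_n c_n\\ 0 & \lambda_n^2\end{smallmatrix}\right)$. First, since $\sigma_n'$ fixes $w'$ and $(\widehat\Lambda_n)_3^{1,0}=0$ on $E_\varepsilon$, the third component of the first jet gives $(\widetilde\Lambda_n)_3^{0,1}=\lambda_n^2(\widehat\Lambda_n)_3^{0,1}$, whence $\lambda_n^2\in[N^{-2},N^2]$ and $\lambda_n$ is bounded above and away from $0$. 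Second, the first two components of the first jet give ${}^t((\widetilde\Lambda_n)_1^{1,0},(\widetilde\Lambda_n)_2^{1,0})=\lambda_n u_n U_n'\,{}^t((\widehat\Lambda_n)_1^{1,0},(\widehat\Lambda_n)_2^{1,0})$, the exact analogue of \eqref{firsttwofirst2} with $\lambda_n'$ replaced by $\lambda_n u_n$; writing $U_n'$ as in Lemma~\ref{proact}, conjugating the second component and using the $E_\varepsilon$-identity $(\widehat\Lambda_n)_3^{0,1}=\|{}^t((\widehat\Lambda_n)_1^{1,0},(\widehat\Lambda_n)_2^{1,0})\|_\varepsilon^2$ produces a $2\times2$ linear system for $(a_{1,n},a_{2,n})$ whose determinant equals $u_n'\lambda_n^2(\widehat\Lambda_n)_3^{0,1}$ (the factor $|u_n|^2=1$ cancelling), hence bounded away from $0$; this bounds $(a_{1,n},a_{2,n})$ and thus $U_n'$ (automatic when $\varepsilon=+1$, since $U(2)$ is compact, and the substantive point when $\varepsilon=-1$). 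Third, the $(0,1)$-part of the first two components, after substituting the relation just found, reduces to $c_n\,{}^t((\widetilde\Lambda_n)_1^{1,0},(\widetilde\Lambda_n)_2^{1,0})=(\text{bounded})$; since $\|{}^t((\widetilde\Lambda_n)_1^{1,0},(\widetilde\Lambda_n)_2^{1,0})\|_\varepsilon^2=(\widetilde\Lambda_n)_3^{0,1}\geq 1/N$ keeps a component of this vector bounded away from $0$, I conclude $c_n$ is bounded. Finally, the $(0,2)$-coefficient of the third component depends on $r_n$ linearly, of the form $(\widetilde\Lambda_n)_3^{0,2}=-\lambda_n^2(\widehat\Lambda_n)_3^{0,1}r_n+R_n$ with $R_n$ a bounded remainder independent of $r_n$, and the coefficient of $r_n$ is bounded away from $0$, so $r_n$ is bounded.

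With all parameters confined to compact sets --- $\lambda_n$ in a compact subset of $\mathbb R^+$, $r_n$ in a bounded interval, $u_n\in\mathbb S^1$, $c_n$ bounded, and $U_n'$ in a compact subset of $U_\varepsilon$ --- I would extract a subsequence converging to an element of ${\rm Aut}_0(\mathbb H^2)\times U_\varepsilon$, proving properness. The hard part is precisely the simultaneous non-compactness of ${\rm Aut}_0(\mathbb H^2)$ and, for $\varepsilon=-1$, of $U(1,1)$; the device that overcomes it is the restriction to $E_\varepsilon$, where the relation $(\widehat\Lambda)_3^{0,1}=\|{}^t((\widehat\Lambda)_1^{1,0},(\widehat\Lambda)_2^{1,0})\|_\varepsilon^2$ together with the lower bound on $|(\widehat\Lambda)_3^{0,1}|$ keeps every relevant determinant bounded away from zero, and hence all the inversions above under control.
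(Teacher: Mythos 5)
Your proposal is correct and follows essentially the same route as the paper's proof: bound $\lambda_n$ from the third component of the first jet, bound $U_n'$ via the $2\times 2$ system for $(a_{1,n},a_{2,n})$ whose determinant is kept away from zero by the $E_\varepsilon$-condition, then bound $c_n$ from the $(0,1)$-part of the first two components, and finally bound $r_n$ from the third component of the second jet. The only (immaterial) deviation is in the $c_n$-step, where you substitute the $(1,0)$-relation and use the already-established boundedness of $U_n'$ to get $c_n\,{}^t\bigl((\widetilde\Lambda_n)_1^{1,0},(\widetilde\Lambda_n)_2^{1,0}\bigr)$ bounded and then argue componentwise, whereas the paper inverts $U_n'$ and estimates with $\|\cdot\|_\varepsilon$ directly; your variant even sidesteps the slight awkwardness of a triangle-type inequality for the indefinite form when $\varepsilon=-1$.
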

\begin{proof} 
As in the previous lemma we show the following: let $N>1$, and let $\{(\Lambda_n,\widetilde\Lambda_n)\}_{n\in\mathbb N} \subset E_\varepsilon \times E_\varepsilon$, $\{g_n = (\sigma_n,\sigma'_n)\}_{n\in \mathbb N} \subset G$ be sequences such that $|\Lambda_n|, |\widetilde \Lambda_n|\leq N$, $|(\Lambda_n)_3^{0,1}|, |(\widetilde \Lambda_n)_3^{0,1}|\geq 1/N$
 and $\widetilde\Lambda_n = g_n \Lambda_n$ for all $n\in \mathbb N$. Then $\{g_n\}$ admits a convergent subsequence. 

We first look at the third component of the first jet, obtaining that
\[\lambda_n^2(\Lambda_n)_3^{0,1} = (\widetilde \Lambda_n)_3^{0,1},\]
which implies that the sequence $\lambda_n$ is bounded above and below.
Looking at the first two components of the first jet we get the following equation
\begin{equation}\label{firsttwofirst}
\lambda_n u_n U_n' \ {}^t((\Lambda_n)_1^{1,0}, (\Lambda_n)_2^{1,0}) = {}^t((\widetilde \Lambda_n)_1^{1,0}, (\widetilde \Lambda_n)_2^{1,0}).
\end{equation}
If we write $U'_n\in U_\varepsilon$ as 
\[U'_n = \left( \begin{array}{cc} u'_n a_{1,n} & -\varepsilon u'_n a_{2,n} \\ \overline a_{2,n} & \overline a_{1,n} \end{array} \right) \ {\rm with}\  |u'_n|=1, \ |a_{1,n}|^2\ + \varepsilon|a_{2,n}|^2=1,\]
conjugating the second component of \eqref{firsttwofirst} we can rewrite it as 
\[\lambda_n  \left( \begin{array}{cc} u'_n  (\Lambda_n)_1^{1,0} & -\varepsilon u'_n  (\Lambda_n)_2^{1,0} \\ ( \overline \Lambda_n)_1^{1,0} &  (\overline \Lambda_n)_2^{1,0} \end{array} \right) \left(\begin{array}{c} a_{1,n}  \\  a_{2,n} \end{array} \right) =   \left(\begin{array}{c} \overline u_n (\widetilde \Lambda_n)_1^{1,0} \\  u_n (\overline{\widetilde \Lambda}_n)_2^{1,0}\end{array} \right).\]
Since $\Lambda_n \in E_\varepsilon$ the determinant of the matrix on the left-hand side is equal to
\[u_n' (|(\Lambda_n)_1^{1,0}|^2 +\varepsilon |(\Lambda_n)_2^{1,0}|^2) = u_n' (\Lambda_n)_3^{0,1},\]
and since $|(\Lambda_n)_3^{0,1}|\geq1/N$ we conclude that the sequence $(a_{1,n},a_{2,n})$ is bounded in $\mathbb C^2$ (of course the previous computation is superfluous if $\varepsilon=+1$).

The remaining equations coming from the first two components of the first jet can be written as follows
\[\lambda_n u_n U_n'\left (\lambda_n\left(  \begin{array}{c} (\Lambda_n)_1^{0,1} \\   (\Lambda_n)_2^{0,1} \end{array} \right) +  u_n c_n\left(\begin{array}{c}  (\Lambda_n)_1^{1,0}\\ (\Lambda_n)_2^{1,0} \end{array}\right) \right)= \left(\begin{array}{c}  (\widetilde \Lambda_n)_1^{0,1} \\   (\widetilde \Lambda_n)_2^{0,1}  \end{array} \right), \]
therefore
\[ \frac{1}{\lambda_n  u_n} {U_n'}^{-1}\left(  \begin{array}{c} (\widetilde\Lambda_n)_1^{0,1} \\   (\widetilde\Lambda_n)_2^{0,1} \end{array} \right) - \lambda_n \left(\begin{array}{c} (\Lambda_n)_1^{0,1}\\ (\Lambda_n)_2^{0,1} \end{array}\right)=  u_n c_n \left(\begin{array}{c}  (\Lambda_n)_1^{1,0} \\   (\Lambda_n)_2^{1,0}  \end{array} \right) \]
which implies
\[ \frac{|c_n| }{N}\leq |c_n|  \left \| \left(\begin{array}{c}  (\Lambda_n)_1^{1,0} \\   (\Lambda_n)_2^{1,0}  \end{array} \right) \right \|_{\varepsilon} \leq \lambda_n \left \|\left(\begin{array}{c}  (\Lambda_n)_1^{0,1}\\  (\Lambda_n)_2^{0,1} \end{array}\right) \right \|_{\varepsilon} +\frac{1}{\lambda_n } \left \|\left( \begin{array}{c} (\widetilde\Lambda_n)_1^{0,1} \\   (\widetilde\Lambda_n)_2^{0,1} \end{array} \right) \right \|_{\varepsilon}\]
so that $c_n$ is bounded in $\mathbb C$. Finally, we consider the third component of the second jet, which gives us the equation
\[(\widetilde \Lambda_n)_3^{0,2} = -\lambda_n^2 (\Lambda_n)_3^{0,1} r_n + R_n,\]
where $R_n$ is a polynomial expression in the second jet of $\Lambda_n$ and in $\lambda_n$, $c_n$, $u_n$ (which does not depend on $r_n$). This shows that the sequence $r_n$ is bounded in $\mathbb R$, and concludes the proof.
\end{proof}

Let now $G'$ be the subgroup of $G$ consisting of elements which only act on the target space, i.e.\ $G' = \{ \id \} \times {\rm Aut}_0(M')$. Clearly, the action of $G$ restricts to an action of $G'$ on $\widetilde J_0^4$, which is still proper. The action of $G'$, however, is in addition free. In order to verify this (local) statement, it is convenient to perform a suitable change of coordinates: 

\begin{lemma} \label{coord2}
Let $M, M'$ and $H$ be as before, with $M'$ Levi-nondegenerate. Then there are local  changes of coordinates $\Phi,\Phi'$ of $\mathbb C^2$ and $\mathbb C^3$ such that 
\begin{itemize}
\item$\Phi'\circ H\circ \Phi(z,w)=(z,F(z,w),w)$ for a certain germ of holomorphic function $F: \mathbb C^2\to \mathbb C$ such that $F(0) = 0$;
\item the automorphism group of $M'$ at $0$ is a subgroup of ${\rm Aut}_0 (\mathbb H^3)$ (if $M'$ is strongly pseudoconvex) or ${\rm Aut}_0 (\mathbb H^3_-)$ (if the signature of $M'$ is $(1,1)$). 
\end{itemize}
\end{lemma}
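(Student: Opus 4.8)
The plan is to realize the two assertions by composing two essentially independent normalizations: a \emph{target} change of coordinates $\Psi'$ bringing $M'$ into Chern--Moser normal form, and the \emph{source} change furnished by Lemma~\ref{coord}, with a linear adjustment inserted between them to make the two compatible.

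First I would invoke the Chern--Moser theory \cite{CM}. Since $M'$ is Levi-nondegenerate at $0$ with $T_0 M'=\{{\rm Im}\,w'=0\}$, its Levi form has signature $(2,0)$ or $(1,1)$, and after a biholomorphic change $\Psi'$ of $(\mathbb C^3,0)$ fixing $0$ we may assume $M'$ is in normal form with osculating hyperquadric $\mathbb H^3_\varepsilon$, where $\varepsilon=+1$ in the strongly pseudoconvex case and $\varepsilon=-1$ in the signature $(1,1)$ case. The key input is the standard fact (a consequence of the Chern--Moser construction, consistent with the descriptions in \cite{BV} and \cite{Ez}) that in such normal coordinates one has the inclusion of germ groups ${\rm Aut}_0(\Psi'(M'))\subseteq {\rm Aut}_0(\mathbb H^3_\varepsilon)$; equivalently, on the level of Lie algebras $\mathfrak{hol}_0(\Psi'(M'))\subseteq \mathfrak{hol}_0(\mathbb H^3_\varepsilon)$, after which the inclusion of identity components follows by integrating the flows (each of which simultaneously preserves $\Psi'(M')$ and $\mathbb H^3_\varepsilon$, as its generator lies in both algebras), the remaining finitely many components being controlled by $2$-jet determination.

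Next I would feed the transversal embedding $\widetilde H=\Psi'\circ H$ (transversality is preserved because $d\Psi'_0$ fixes the complex tangent $\langle \frac{\partial}{\partial z_1'},\frac{\partial}{\partial z_2'}\rangle$) into the proof of Lemma~\ref{coord}. That proof yields the form $(z,F(z,w),w)$ by a pure source change, except that it may first require the target swap $\kappa\colon(z_1',z_2',w')\mapsto(z_2',z_1',w')$ to render the Jacobian of $(\widetilde H_1,\widetilde H_3)$ invertible at $0$. For $\varepsilon=+1$ the swap already lies in ${\rm Aut}_0(\mathbb H^3_+)$ and is harmless, but for $\varepsilon=-1$ it would replace $\mathbb H^3_-$ by $\kappa(\mathbb H^3_-)$ and spoil the second assertion. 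The remedy — and the point I expect to require the most care — is to replace $\kappa$ by a \emph{unitary} rotation $\tau\in U_\varepsilon\subseteq {\rm Aut}_0(\mathbb H^3_\varepsilon)$: writing the gradients at $0$ of $\widetilde H_1,\widetilde H_2,\widetilde H_3$ as the columns of a rank-$2$ matrix whose last column is nonzero (transversality), the pairs $(a,b)$ for which $a\widetilde H_1+b\widetilde H_2$ pairs with $\widetilde H_3$ to an invertible Jacobian form the complement of a single complex line, and this complement meets the timelike cone $\{|a|^2-|b|^2>0\}$ of admissible first rows of $U(1,1)$; hence a suitable $\tau$ exists (for $\varepsilon=+1$ a generic rotation in $U(2)$ works).

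Finally I would set $\Phi'=\tau\circ\Psi'$ and take $\Phi$ to be the local inverse of $(\Phi'\circ H)_{1,3}$ supplied by Lemma~\ref{coord}, so that the first assertion holds by construction. For the second, the crucial observation is that conjugation by $\tau$ preserves the model group: since $\tau\in{\rm Aut}_0(\mathbb H^3_\varepsilon)$,
\[
{\rm Aut}_0(\Phi'(M'))=\tau\,{\rm Aut}_0(\Psi'(M'))\,\tau^{-1}\subseteq \tau\,{\rm Aut}_0(\mathbb H^3_\varepsilon)\,\tau^{-1}={\rm Aut}_0(\mathbb H^3_\varepsilon),
\]
so replacing the swap by $\tau$ costs nothing. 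The main obstacle is therefore neither the source normalization (Lemma~\ref{coord}) nor the existence of $\Psi'$ (Chern--Moser), but their simultaneous realization: the insight that the realignment needed to avoid the parity-changing swap in the signature $(1,1)$ case can always be performed inside ${\rm Aut}_0(\mathbb H^3_\varepsilon)$ itself, so that the inclusion ${\rm Aut}_0(\Phi'(M'))\subseteq{\rm Aut}_0(\mathbb H^3_\varepsilon)$ is left undisturbed.
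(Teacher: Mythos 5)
The gap is in your first step, and it is precisely the crux of the lemma. The ``standard fact'' you invoke --- that once $M'$ is put in Chern--Moser normal form one has the inclusion of germ groups ${\rm Aut}_0(\Psi'(M'))\subseteq {\rm Aut}_0(\mathbb H^3_\varepsilon)$ --- is not a consequence of the Chern--Moser construction. What \cite{CM} gives is weaker: every $h\in{\rm Aut}_0(\Psi'(M'))$ is uniquely determined by its initial data (essentially a $2$-jet), and this data ranges in the parameter set of ${\rm Aut}_0(\mathbb H^3_\varepsilon)$. But $h$ itself is the composition of the hyperquadric automorphism carrying that data with the renormalization map that brings the image hypersurface back to normal form; unless the data corresponds to a linear map, this renormalization is a nontrivial power series, and $h$ is in general \emph{not} the germ of a hyperquadric automorphism. (The same objection defeats your Lie-algebra variant: an element of $\mathfrak{hol}_0(\Psi'(M'))$ has $2$-jet matching that of an element of $\mathfrak{hol}_0(\mathbb H^3_\varepsilon)$, but the vector field itself need not lie in $\mathfrak{hol}_0(\mathbb H^3_\varepsilon)$, so there is no generator ``lying in both algebras'' to integrate.) Upgrading ``determined by hyperquadric data'' to ``equal, in suitable coordinates, to hyperquadric automorphisms'' is exactly the content of the linearization theorems the paper quotes: Kruzhilin--Loboda \cite{KL} for strongly pseudoconvex $M'$ (yielding a compact \emph{linear} subgroup of ${\rm Aut}_0(\mathbb H^3)$ when $M'\not\cong\mathbb H^3$) and Ezhov \cite{Ez} for signature $(1,1)$. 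These are substantive theorems, not formal corollaries of the normal form; your proof needs them (or an equivalent argument) in place of the appeal to \cite{CM}.

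With that substitution, the rest of your argument is correct, and your second step is in fact more careful than the paper's own proof. The paper applies Lemma \ref{coord} and asserts that only the source change matters, passing over the fact that Lemma \ref{coord} may require the target swap $\kappa(z_1',z_2',w')=(z_2',z_1',w')$, which for $\varepsilon=-1$ is not an automorphism of $\mathbb H^3_-$ and would conjugate ${\rm Aut}_0(\mathbb H^3_-)$ to the automorphism group of a different hyperquadric. Your replacement of $\kappa$ by a rotation $\tau\in U_\varepsilon\subseteq{\rm Aut}_0(\mathbb H^3_\varepsilon)$ is sound: the Jacobian determinant at $0$ of the pair $\left(a\widetilde H_1+b\widetilde H_2,\widetilde H_3\right)$ is a linear form $aA+bB$ with $(A,B)\neq(0,0)$ (by transversality and ${\rm rank}\, d\widetilde H_0=2$), its kernel is a complex line which cannot contain the open cone $\{|a|^2-|b|^2>0\}$, and every $(a,b)$ with $|a|^2-|b|^2=1$ occurs as the first row of a matrix in $U(1,1)$; finally, conjugation by $\tau$ leaves ${\rm Aut}_0(\mathbb H^3_\varepsilon)$ invariant, as you say.
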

\begin{proof}
Suppose that $M'$ is strongly pseudoconvex: by  \cite{KL}, with a suitable change of coordinates $\Phi'$ in $\mathbb C^3$ we can assume that ${\rm Aut}_0(M')$ is a subgroup of ${\rm Aut}_0(\mathbb H^3)$ (actually, a compact linear subgroup if $M'\neq \mathbb H^3$). If the signature of $M'$ is $(1,1)$, the analogous conclusion follows by invoking \cite{Ez}. We can then change coordinates in $\mathbb C^2$ via $\Phi$ as given by Lemma \ref{coord}  to achieve the first point: clearly the automorphism group of $M'$ is not affected by this.
\end{proof}

\begin{lemma}\label{free}
Let $\Lambda\in E_\varepsilon$ be the $4$-jet of a map of the form $(z,w)\to(z,F(z,w),w)$. Then the stabilizer of $\Lambda$ under the action of $G'_\varepsilon = \{\id\} \times {\rm Aut}_0(\mathbb H^3_\varepsilon)$ is trivial.
\end{lemma}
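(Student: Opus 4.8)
The plan is to work directly with the explicit parametrization \eqref{param} of ${\rm Aut}_0(\mathbb H^3_\varepsilon)$ and to match jets coefficient by coefficient. Write $H(z,w)=(z,F(z,w),w)$, so that $\Lambda=j_0^4 H$ has first component $z$, third component $w$, and second component equal to the $4$-jet of $F$. Two nondegeneracy facts about $F$ will drive everything. First, the defining relation of $E_\varepsilon$, namely $\Lambda_3^{0,1}=|\Lambda_1^{1,0}|^2+\varepsilon|\Lambda_2^{1,0}|^2$, reads $1=1+\varepsilon|\Lambda_2^{1,0}|^2$ for our jet, forcing $\Lambda_2^{1,0}=F_z(0)=0$; hence $F=aw+bz^2+O(3)$ with $a=\Lambda_2^{0,1}$ and $b=\Lambda_2^{2,0}$. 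Second, since $E_\varepsilon\subset\widetilde J_0^4$, the determinant condition defining $\widetilde J_0^4$ reduces (because $\Lambda_1^{1,0}=1$, $\Lambda_1^{2,0}=0$, $\Lambda_2^{1,0}=0$) to $b=\Lambda_2^{2,0}\neq 0$. It is exactly this single inequality that will later remove the remaining rotational freedom.

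Next I would take $\sigma'=\sigma'_{\gamma'}$ with $\gamma'=(\lambda',r',U',c')$ in the stabilizer, so that $j_0^4(\sigma'_{\gamma'}\circ H)=\Lambda$, and expand $\sigma'_{\gamma'}\circ H$ as a power series in $(z,w)$, writing $U'$ in the form used in the proof of Lemma~\ref{proact2} with parameters $u',a_1,a_2$ (so $|u'|=1$, $|a_1|^2+\varepsilon|a_2|^2=1$) and $c'=(c_1',c_2')$. Since the denominator of \eqref{param} has constant term $1$, the linear part of each component of $\sigma'_{\gamma'}\circ H$ equals the linear part of the corresponding numerator. Comparing with $(z,F,w)$ gives, from the third component, $\lambda'^2=1$, hence $\lambda'=1$; from the $z$-coefficient of the second component, $\lambda'\overline{a_2}=0$, hence $a_2=0$ and $|a_1|=1$; from the first component, $u'a_1=1$ (so $u'=\overline{a_1}$) and $c_1'=0$; and from the $w$-coefficient of the second component, $\overline{a_1}(a+c_2')=a$, i.e.\ $c_2'=a(a_1-1)$. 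At this stage $U'=\mathrm{diag}(1,\overline{a_1})$ and $c'=(0,a(a_1-1))$ with $|a_1|=1$, while the phase $a_1$ and the real parameter $r'$ remain undetermined.

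The decisive step is to pass to second order. With the parameters just found, the second-component numerator becomes $\overline{a_1}(F+c_2'w)=aw+\overline{a_1}b\,z^2+O(3)$, and the $z^2$-coefficient of the quotient is exactly $\overline{a_1}b$: the nonconstant part of the denominator contributes nothing here, because the only numerator monomial of order $\le 2$ that could produce a pure $z^2$ is $\overline{a_1}b\,z^2$ itself (the order-one monomial $aw$ can only generate monomials divisible by $w$). Matching with the $z^2$-coefficient $b$ of $F$ and invoking $b\neq 0$ forces $\overline{a_1}=1$, i.e.\ $a_1=1$; hence $u'=1$, $U'=I$ and $c'=0$. Finally, with $\lambda'=1$, $U'=I$, $c'=0$ the automorphism reduces to $\sigma'(z',w')=(z',w')/(1+r'w')$, so the third component of $\sigma'\circ H$ is $w/(1+r'w)=w-r'w^2+O(3)$; matching the $w^2$-coefficient with $0$ gives $r'=0$. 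Therefore $\gamma'=(1,0,I,0)$ and $\sigma'=\id$, proving that the stabilizer is trivial.

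I expect the main obstacle to be purely organizational: carefully tracking the contribution of the nonconstant denominator of \eqref{param} to each relevant low-order jet coefficient, so as to be certain that the clean identities used above (above all, that the $z^2$-coefficient of the second component is precisely $\overline{a_1}b$) genuinely hold. Once the bookkeeping is set up, the conceptual skeleton is short: the linear terms pin down everything except a rotation $a_1$ and a real parameter $r'$; the lone nondegeneracy $\Lambda_2^{2,0}\neq 0$ kills the rotation at second order; and the remaining $w^2$-coefficient kills $r'$. Note also that only jets up to order two are actually needed, even though the stabilizer is defined via the $4$-jet action.
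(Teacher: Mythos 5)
Your proof is correct and follows essentially the same route as the paper's: both use the two nondegeneracy facts ($\Lambda_2^{1,0}=0$ from $E_\varepsilon$ and $\Lambda_2^{2,0}\neq 0$ from $\widetilde J_0^4$), pin down $\lambda'$, $U'$ (up to the residual phase) and $c'$ from the first jet, then kill the phase via the $z^2$-coefficient of the second component and $r'$ via the $w^2$-coefficient of the third. Your only cosmetic deviation is parametrizing $U'$ with the $(u',a_1,a_2)$ form from Lemma~\ref{proact2} instead of deducing directly that $U'=\mathrm{diag}(1,e^{i\theta})$, which amounts to the same computation.
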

\begin{proof}
By assumption, we can write the projection of $\Lambda$ to its $2$-jet as
\[j_0^2(\Lambda):(z,w) \to (z, \Lambda_2^{1,0}z + \Lambda_2^{0,1}w + \Lambda_2^{2,0}z^2 + \Lambda_2^{1,1}zw + \Lambda_2^{0,2}w^2,w).\]
Since $\Lambda\in \widetilde J_0^4$ it follows that $\Lambda_2^{2,0}\neq 0$: indeed, we must have
\[0\neq \Lambda_1^{1,0}\Lambda_2^{2,0}- \Lambda_2^{1,0}\Lambda_1^{2,0} = 1\cdot \Lambda_2^{2,0} - \Lambda_2^{1,0}\cdot 0 = \Lambda_2^{2,0}.\]
Furthermore, using that $\Lambda\in E_\varepsilon$ we deduce $\Lambda_2^{1,0}=0$ since
\[|\Lambda_2^{1,0}|^2 = \varepsilon(\Lambda_3^{0,1}-|\Lambda_1^{1,0}|^2)=0.\]
 
Let $\sigma'\in {\rm Aut}_0(\mathbb H^3_\varepsilon)$, corresponding to $\gamma' = (\lambda',r',U',c')\in \Gamma'_\varepsilon$, be such that $j_0^4(\sigma' \circ \Lambda) = \Lambda$. We will follow the same computations as in Lemma \ref{proact}. Looking at the third component of the first jet, by (\ref{param}) we must have $(\lambda')^2 = 1$, hence $\lambda' = 1$ since $\lambda'\in \mathbb R^+$.

Looking now at the first two components of the first jet, we get
\begin{align*}
U' \left ( \begin{array}{c} 1  \\ 0 \end{array}\right) & = \left ( \begin{array}{c} 1  \\ 0 \end{array}\right), \ {i.e.}\  U' = \left( \begin{array}{cc} 1 & 0 \\ 0 & e^{i\theta} \end{array} \right),\\
U' \left( \begin{array}{c} 0  \\ \Lambda_2^{0,1} \end{array} \right) +  c' & = \left( \begin{array}{c} 0  \\ \Lambda_2^{0,1} \end{array} \right),\  {i.e.} \ c_1' = 0, c_2' = \Lambda_2^{0,1}(e^{i\theta}-1).
\end{align*}
Using the equations above, the ones for the second jet become as follows:
\begin{align*}
e^{i\theta}\Lambda_2^{2,0} & =\Lambda_2^{2,0}, \  {i.e.}\ \theta = 0, c_2' = 0, \\
-r' & = \Lambda_3^{0,2} = 0,
\end{align*}
which shows that $\sigma'=\id$. 
\end{proof}

\section{A linear criterion for local rigidity}\label{sec:linear}

We are now going to refer to the notation of Theorem \ref{jetparam}. Let $A\subset  J_0^4$ be the real-analytic set defined as
\begin{equation}\label{defas}
A = \{\Lambda\in  J_0^4 \colon q(\Lambda, \bar \Lambda)\neq 0,\, c_j(\Lambda,\overline \Lambda) = 0, j\in \mathbb N\}.
\end{equation}
By Theorem \ref{jetparam}, $A$ coincides with the set of the $4$-jets of the transversal, $2$-nondegenerate local embeddings of $M$ into $M'$, i.e.\ $A = j_0^4(\mathcal F_2)$, which implies that $A \subset E_\varepsilon$ if $M'$ is Levi-nondegenerate by Remark \ref{furthermore}.

The restriction of $\Phi$ to $\mathcal U \cap (\mathbb C^2 \times A)$ gives rise to a map 
\[A \ni \Lambda \to \Phi(\Lambda) \in (\mathbb C\{z,w\})^3, \ \Phi(\Lambda) (z,w) = \Phi(z,w,\Lambda)\]
from $A$ to the space $(\mathbb C\{z,w\})^3$. By Theorem \ref{jetparam}, the image of $A$ under $\Phi$ is actually $\mathcal F_2$. We will need the following remark:
\begin{lemma}\label{Ginv}
The analytic set $A$ is invariant under the action of $G$. Moreover, the map $\Phi:A\to\mathcal F_2$ is an equivariant homeomorphism. 
\end{lemma}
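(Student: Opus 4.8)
The plan is to exhibit $j_0^4$ and $\Phi$ as mutually inverse, continuous, $G$-equivariant bijections between $A$ and $\mathcal F_2$, with the invariance of $A$ emerging as a byproduct. First I would record that the $4$-jet map intertwines the two $G$-actions. Let $g = (\sigma,\sigma') \in G$. By the chain rule, the $4$-jet at $0$ of a composition $\sigma' \circ H \circ \sigma^{-1}$ (all three maps fixing $0$) is a universal polynomial expression in the $4$-jets of $\sigma'$, $H$, $\sigma^{-1}$; in particular it depends on $H$ only through $j_0^4 H$. Hence, for every $H \in \mathcal F_2$,
\[ j_0^4(\sigma'\circ H\circ\sigma^{-1}) = j_0^4\bigl(\sigma'\circ (j_0^4 H)\circ\sigma^{-1}\bigr) = g\cdot (j_0^4 H), \]
which is exactly the assertion that $j_0^4 \colon \mathcal F_2 \to J_0^4$ is $G$-equivariant for the action on $J_0^4$ introduced in Section~\ref{isogro}. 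Since $\mathcal F_2$ is $G$-invariant and $A = j_0^4(\mathcal F_2)$ by Theorem~\ref{jetparam}, it follows at once that $A$ is $G$-invariant: writing $\Lambda = j_0^4 H$ with $H \in \mathcal F_2$, we get $g\cdot\Lambda = j_0^4(g\cdot H) \in j_0^4(\mathcal F_2) = A$.

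Next I would check that $\Phi$ and $j_0^4$ are mutually inverse between $A$ and $\mathcal F_2$. By Theorem~\ref{jetparam} we have $\Phi(j_0^4 H) = H$ for all $H\in\mathcal F_2$, so $\Phi\circ j_0^4 = \mathrm{id}_{\mathcal F_2}$; conversely, given $\Lambda\in A$ choose $H\in\mathcal F_2$ with $\Lambda = j_0^4 H$, so that $\Phi(\Lambda)=\Phi(j_0^4 H)=H$ and hence $j_0^4(\Phi(\Lambda)) = j_0^4 H = \Lambda$, i.e.\ $j_0^4\circ\Phi = \mathrm{id}_A$. For continuity, $j_0^4$ is the truncation of a power series to order $4$, hence continuous from $(\mathbb C\{z,w\})^3$ (with its direct-limit topology) to the finite-dimensional space $J_0^4$. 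For $\Phi$, I would use that it is holomorphic on the open set $\mathcal U \subset \mathbb C^2\times\{q\neq 0\}$: fixing $\Lambda_0\in A$ and a polydisc neighborhood on which $\Phi$ is bounded and holomorphic yields uniform convergence of the defining series on a fixed ball $B_R(0)$, so $\Lambda\mapsto \Phi(\,\cdot\,,\Lambda)$ is continuous into ${\rm Hol}(\overline{B_R(0)},\mathbb C^3)$ near $\Lambda_0$; composing with the continuous inclusion ${\rm Hol}(\overline{B_R(0)},\mathbb C^3)\hookrightarrow(\mathbb C\{z,w\})^3$ (cf.\ Remark~\ref{implfunct}) gives continuity of $\Phi\colon A\to\mathcal F_2$. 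Thus $\Phi$ is a homeomorphism.

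Finally, equivariance of $\Phi$ follows formally from the first two steps. For $\Lambda\in A$ and $g\in G$, write $\Lambda = j_0^4 H$ with $H=\Phi(\Lambda)\in\mathcal F_2$; then $g\cdot\Lambda = j_0^4(g\cdot H)$ by the intertwining identity, and since $g\cdot H\in\mathcal F_2$ we conclude
\[ \Phi(g\cdot\Lambda) = \Phi\bigl(j_0^4(g\cdot H)\bigr) = g\cdot H = g\cdot\Phi(\Lambda). \]
The conceptual heart of the argument is this intertwining identity: although the action on $J_0^4$ is defined by truncating a composition, and is therefore not obviously compatible with the action on germs, the chain rule guarantees that the $4$-jet of $\sigma'\circ H\circ\sigma^{-1}$ depends only on $j_0^4 H$, which is precisely what makes $j_0^4$ equivariant and makes the jet action well defined. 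The only genuinely analytic point is the continuity of $\Phi$ in the (DFS) topology, and this is handled by the holomorphy of $\Phi$ on $\mathcal U$ together with the observations collected in Remark~\ref{implfunct}.
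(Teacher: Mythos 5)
Your proof is correct and follows essentially the same route as the paper: Theorem~\ref{jetparam} gives that $j_0^4$ and $\Phi$ are mutually inverse, holomorphy of $\Phi$ on $\mathcal U$ gives continuity, and the $G$-invariance of $\mathcal F_2$ together with $A = j_0^4(\mathcal F_2)$ gives invariance and equivariance. The only difference is presentational: you spell out the chain-rule fact that the $4$-jet of $\sigma'\circ H\circ\sigma^{-1}$ depends on $H$ only through $j_0^4 H$, which the paper uses implicitly when it replaces $\Lambda$ by $\Phi(\Lambda)$ inside the composition.
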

\begin{proof}
From Theorem \ref{jetparam} follows that $\Phi:A\to \mathcal F_2$ is one to one, since the projection $j_0^4$ is the inverse of $\Phi$. The continuity of the map $\Phi$ follows directly from the fact that $\Phi(z,w,\Lambda): \mathcal U\to \mathbb C^3$ is holomorphic (which is a much stronger statement). Since $j_0^4:(\mathbb C\{z,w\})^3\to J_0^4$ is also continuous, $\Phi$ admits a continuous inverse and thus it is a homeomorphism $A\to \mathcal F_2$.

Let now $\Lambda=j_0^4(\Phi(\Lambda) )\in A$ with $\Phi(\Lambda) \in \mathcal F_2$, and let $\Lambda' = j_0^4 (\sigma'\circ \Lambda \circ \sigma^{-1})$ (where $(\sigma,\sigma')\in G$). Then, as noticed in section \ref{isogro}, we also have $\sigma'\circ \Phi(\Lambda)  \circ \sigma^{-1} \in \mathcal F_2$ and thus $\Lambda' = j_0^4(\sigma'\circ \Phi (\Lambda) \circ \sigma^{-1}) \in j_0^4(\mathcal F_2) = A$. This shows that $A$ is $G$-invariant and that $\Phi$ is equivariant. 
\end{proof}

 Let $N\subset A$ be any regular (real-analytic) submanifold, and fix $\Lambda_0 \in N$. In what follows we focus on $\Phi|_N$. 
 By Theorem \ref{jetparam}, the image of $N$ through $\Phi$ is $(j_0^4)^{-1}(N)\cap \mathcal F_2$. 
 Note that if we restrict to a small enough neighborhood $N'$ of $\Lambda_0$ in $N$, 
 the maps $\Phi(\Lambda) $ for all $\Lambda\in N'$ all have a common radius of convergence $R$, 
 so that we can consider the restriction of $\Phi$ to $N'$ as a map 
 valued in  the Banach space ${\rm Hol}(\overline{B_R(0)}, \mathbb C^3)$.

We also remark that the map $\Phi: N \to (\mathbb C\{z,w\})^3$ is of class $C^\infty$. Its Fr\'echet derivative at $\Lambda_0$ is the map
\[D\Phi(\Lambda_0): T_{\Lambda_0}N \to T_{\Phi(\Lambda_0)}(\mathbb C\{z,w\})^3\cong (\mathbb C\{z,w\})^3,\ \]
\[ T_{\Lambda_0}N \ni  \Lambda' \to \frac{\partial \Phi}{\partial \Lambda '}(z,w,\Lambda_0)=
\frac{d}{dt}\biggr|_{t=0} \Phi(z,w,\Lambda_0 + t \Lambda')\in (\mathbb C\{z,w\})^3. \]
 
Write the components of  $\Phi(\Lambda_0)$ as $(\Phi_1, \Phi_2, \Phi_3)$. We need to identify a particular subspace of the tangent space of $(\mathbb C\{z,w\})^3$:
\begin{definition}\label{def:infdef} Assume that $M\subset\mathbb C^2$, $M'\subset\mathbb C^3$ are hypersurfaces through $0$, and 
that
$H\colon (\mathbb C^2,0) \to (\mathbb C^3,0)$ is a map with $H(M) \subset M'$. Then 
we say that a vector 
\[V =  \alpha(z,w)\frac{\partial}{\partial z_1'} + \beta(z,w)\frac{\partial}{\partial z_2'} + \gamma(z,w)\frac{\partial}{\partial w'} \in T_{H}(\mathbb C\{z,w\})^3\] 
is an \emph{infinitesimal deformation of $H$} if the real part of $V$ is tangent to $M'$ along $H(M)$, i.e.\ if 
for any defining function $\rho$ of $M'$
\begin{equation}\label{infdef}
{\rm Re}\left(\alpha(z,w)\frac{\partial \rho}{\partial z_1'}(H,\overline H) + \beta(z,w)\frac{\partial \rho}{\partial z_2'}(H,\overline H) + \gamma(z,w)\frac{\partial \rho}{\partial w'}(H,\overline H)\right)=0 \ \ {\rm for} \ (z,w)\in M. 
\end{equation}
We denote this subspace of $T_{H}(\mathbb C\{z,w\})^3$ by $\mathfrak {hol}_0 (H)$.

In the case when $M'=\mathbb H^3$, we can write the previous condition as
\begin{equation}\label{infdef2}
{\rm Re} \left(2 \alpha(z,w)\overline{H_1(z,w)} + 2 \beta(z,w)\overline{H_2(z,w)} - i\gamma(z,w) \right ) = 0 \ \ {\rm for} \ (z,w)\in M. 
\end{equation}

\end{definition}
The motivation for the definition above is the following observation:
\begin{lemma}\label{contain}
The image of $T_{\Lambda_0}N$ by $D\Phi(\Lambda_0)$ is contained in $\mathfrak {hol}_0 (\Phi(\Lambda_0))$.
\end{lemma}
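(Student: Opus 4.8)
The plan is to realize the vector $V = D\Phi(\Lambda_0)[\Lambda']$ as the velocity of a genuine curve of embeddings lying inside $\mathcal F_2$, and then to differentiate, in the deformation parameter, the defining equation $\rho\circ H_t=0$ expressing that $H_t(M)\subset M'$.

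First I would fix $\Lambda'\in T_{\Lambda_0}N$ and, since $N$ is a regular real-analytic submanifold of $A$, choose a smooth curve $c\colon(-\delta,\delta)\to N$ with $c(0)=\Lambda_0$ and $\dot c(0)=\Lambda'$. By the remark preceding the lemma, after shrinking $N$ to a neighbourhood of $\Lambda_0$ all the maps $\Phi(c(t))$ share a common radius of convergence $R$, so that $t\mapsto\Phi(c(t))$ is a smooth curve valued in the Banach space ${\rm Hol}(\overline{B_R(0)},\mathbb C^3)$. Because the Fr\'echet derivative $D\Phi(\Lambda_0)$ exists and depends linearly on the velocity, the chain rule yields
\[
V = D\Phi(\Lambda_0)[\Lambda'] = \frac{d}{dt}\Big|_{t=0}\Phi(c(t)).
\]
This is the crucial point: it lets me compute the derivative along a curve that remains inside $N$, rather than along the straight line $\Lambda_0+t\Lambda'$ appearing in the definition, whose image under $\Phi$ need not consist of embeddings into $M'$.

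Next, set $H_t=\Phi(c(t))$ and write its components as $(H_t^1,H_t^2,H_t^3)$. Since $c(t)\in N\subset A=j_0^4(\mathcal F_2)$ for every $t$, Theorem \ref{jetparam} gives $H_t\in\mathcal F_2$, so in particular $H_t(M)\subset M'$, i.e.
\[
\rho\bigl(H_t(Z),\overline{H_t(Z)}\bigr)=0,\qquad Z=(z,w)\in M,
\]
for all small $t$. Differentiating this identity in $t$ at $t=0$ (legitimate since $H_t$ depends smoothly on $t$ in ${\rm Hol}(\overline{B_R(0)},\mathbb C^3)$ and $\rho$ is real-analytic) and writing $(\alpha,\beta,\gamma)$ for the components of $V$ and $H=\Phi(\Lambda_0)$, I obtain on $M$
\[
\alpha\,\frac{\partial\rho}{\partial z_1'}(H,\overline H)+\beta\,\frac{\partial\rho}{\partial z_2'}(H,\overline H)+\gamma\,\frac{\partial\rho}{\partial w'}(H,\overline H)+(\text{c.c.})=0.
\]
Since $\rho$ is real-valued, the conjugate terms are exactly the complex conjugate of the holomorphic ones, so the identity reads $2\,{\rm Re}\left(\alpha\,\partial_{z_1'}\rho+\beta\,\partial_{z_2'}\rho+\gamma\,\partial_{w'}\rho\right)(H,\overline H)=0$ on $M$. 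This is precisely condition \eqref{infdef}, whence $V\in\mathfrak{hol}_0(\Phi(\Lambda_0))$.

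The only genuine subtlety is the replacement of the straight-line difference quotient by a curve inside $N$; this is exactly where the hypothesis that $\Phi|_N$ is $C^\infty$, with a well-defined linear derivative on $T_{\Lambda_0}N$, is used, so that evaluating it along any admissible curve produces the same vector field. The remaining steps are a routine differentiation of the reality-normalized defining equation, after which the claim follows immediately.
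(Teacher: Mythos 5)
Your proof is correct and follows essentially the same route as the paper's: both replace the straight-line difference quotient by a smooth curve inside $N$ through $\Lambda_0$ with velocity $\Lambda'$, use that $\Phi$ maps this curve into $\mathcal F_2$ so that $\rho\circ\Phi(\Lambda(t))=0$ holds identically on $M$, and differentiate in $t$ at $t=0$ to recover condition \eqref{infdef}. The subtlety you flag (that the line $\Lambda_0+t\Lambda'$ need not stay in $A$) is handled the same way, implicitly, in the paper's proof.
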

\begin{proof}
Given $\Lambda'\in T_{\Lambda_0}(N)$, let $\Lambda(t)$ ($t\in \mathbb R$) be any smooth curve contained in $N$ such that $\Lambda(0) = \Lambda_0 $ and $\frac{\partial \Lambda}{\partial t}(0) = \Lambda' $. For all $t\in \mathbb R$, write $(\hat \Phi_1(z,w,t),\hat  \Phi_2(z,w,t),\hat \Phi_3(z,w,t))$ for the components of $\Phi(\Lambda(t))$. Then by definition
\[D\Phi(\Lambda_0)[\Lambda'] = \frac{\partial\hat  \Phi_1}{\partial t}(z,w,0) \frac{\partial}{\partial z_1'} + \frac{\partial\hat  \Phi_2}{\partial t}(z,w,0) \frac{\partial}{\partial z_2'} + \frac{\partial\hat  \Phi_3}{\partial t}(z,w,0) \frac{\partial}{\partial w'}.   \]
Since $\Phi(\Lambda(t))$ maps $M$ into $M'$, we have 
\[\rho(\Phi(z,w,\Lambda(t)),\overline{\Phi(z,w,\Lambda(t))} )= 0\] 
for all $t\in \mathbb R$ and $(z,w)\in M$.
Differentiating with respect to $t$ and computing at $t=0$ we get
\[ {\rm Re}\left(\frac{\partial\hat  \Phi_1}{\partial t}(z,w,0)\frac{\partial \rho}{\partial z_1'}(\Phi,\overline \Phi) + \frac{\partial\hat  \Phi_2}{\partial t}(z,w,0)\frac{\partial \rho}{\partial z_2'}(\Phi,\overline \Phi) + \frac{\partial\hat  \Phi_3}{\partial t}(z,w,0)\frac{\partial \rho}{\partial w'}(\Phi,\overline \Phi)\right)=0\]
for $(z,w)\in M$, which is equivalent to (\ref{infdef}).
\end{proof}
\begin{remark}\label{trivsol} If $H\colon(\C^2,0 ) \to (\C^3,0)$ maps $M$ into $M'$, 
the restriction to  $H(\mathbb C^2)$ of 
any infinitesimal automorphism $Z\in \mathfrak{hol}_0(M')$ of $M'$ solves (\ref{infdef}), 
since in fact the real part of $Z$ is tangent to $M'$ everywhere and not only along $H(\mathbb C^2)$. To be
more exact, if $\mathfrak{hol}_0(M')|_{H(M)}$ denotes the space of infinitesimal automorphisms
of $M'$ restricted to $H(M)$, then 
$\mathfrak{hol}_0(M')|_{H(M)} \subset \mathfrak{hol}_0 (H)$. Thus, if we write
\[ \mathfrak{hol}_{H(M)} (M') =  \{X \in \mathfrak{hol}_0(M') \colon X|H(M) = 0\}\] it follows that
\[ \mathfrak{hol}_0(M')|_{H(M)} = \faktor{\mathfrak{hol}_0(M')}{ \mathfrak{hol}_{H(M)} (M') },   \]  
so that  
 $\dim \mathfrak {hol}_0 (H) \geq \dim \mathfrak {hol}_0 (M') - \dim \mathfrak{hol}_{H(M)} (M')$.

 Moreover, if   $G'=  \{\id\}\times {\rm{Aut}}_0 (M')$ acts freely on $H$, then $ \mathfrak{hol}_{H(M)} (M') = \{0\}$. 
Thus, if $M'=\mathbb H^3$, the dimension of $\mathfrak {hol}_0 (H)$ is always at least $10$. For instance, if $H = \widetilde H$ with $\widetilde H(z,w)=(z,F(z,w),w)$ as given in Lemma \ref{coord}, this \lq\lq trivial\rq\rq\ subspace of solutions of (\ref{infdef}) can be parametrized by
\[ \alpha(z,w)= (t+ih_{11})  z + h_{12}F(z,w) + i\frac{\overline b_1}{2}w +  b_1 z^2 + b_2 z F(z,w)  + s z w, \]
\[ \beta(z,w) = (t+ih_{22}) F(z,w) - \overline h_{12}z +  i\frac{\overline b_2}{2}w + b_1 z F(z,w) + b_2 {F(z,w)}^2 + s wF(z,w), \]
\[ \gamma(z,w) =   2tw + b_1 z w+ b_2 w F(z,w)  + s w^2,\]
where $(t,h_{11},h_{22},s,h_{12},b_1,b_2) \in \mathbb R^4\times \mathbb C^3$.
\end{remark}
In the following lemma, we use some structural results concerning the solutions of a linear equation of the kind  (\ref{infdef}); we defer their treatment to section \ref{linearizedproblem}.
\begin{lemma} \label{dim10}
 Let $\Lambda_0\in A$,  and suppose that
  $\dim_{\mathbb R} \mathfrak {hol}_0 (\Phi(\Lambda_0)) = \ell$.
 Then there exists a neighborhood $U$ of $\Lambda_0$ in $J_0^4$ such that, 
 if $N\subset A$ is a submanifold such that $N\cap U\neq \emptyset$, then the real dimension of $N$ is at most $\ell$.
\end{lemma}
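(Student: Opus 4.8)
The plan is to bound $\dim_{\mathbb R} N$ by $\dim_{\mathbb R}\mathfrak{hol}_0(\Phi(\Lambda_1))$ for a point $\Lambda_1\in N\cap U$, and then to use the upper semicontinuity of the function $\Lambda\mapsto\dim_{\mathbb R}\mathfrak{hol}_0(\Phi(\Lambda))$ to conclude that this latter quantity is at most $\ell=\dim_{\mathbb R}\mathfrak{hol}_0(\Phi(\Lambda_0))$, provided $U$ is chosen small enough.

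The first ingredient is the injectivity of the differential of $\Phi$ along $N$. By Theorem \ref{jetparam} we have $j_0^4\circ\Phi=\mathrm{id}$ on $A$, and since $N\subset A$, restricting gives $j_0^4\circ(\Phi|_N)=\iota_N$, the inclusion of $N$ into $J_0^4$. As $j_0^4\colon(\mathbb C\{z,w\})^3\to J_0^4$ is continuous and linear, differentiating at any $\Lambda_1\in N$ yields
\[ j_0^4\circ D\Phi(\Lambda_1)=\iota\colon T_{\Lambda_1}N\hookrightarrow J_0^4, \]
so that $D\Phi(\Lambda_1)$ is injective on $T_{\Lambda_1}N$. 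Combined with Lemma \ref{contain}, which guarantees $D\Phi(\Lambda_1)(T_{\Lambda_1}N)\subset\mathfrak{hol}_0(\Phi(\Lambda_1))$, this gives
\[ \dim_{\mathbb R}T_{\Lambda_1}N=\dim_{\mathbb R}D\Phi(\Lambda_1)(T_{\Lambda_1}N)\le\dim_{\mathbb R}\mathfrak{hol}_0(\Phi(\Lambda_1)) \]
for every $\Lambda_1\in N$. Here $U$ is shrunk so that the maps $\Phi(\Lambda)$ share a common radius of convergence and $\Phi$ takes values in a fixed Banach space $\mathrm{Hol}(\overline{B_R(0)},\mathbb C^3)$, making the Fréchet derivative available, exactly as set up in the discussion preceding the lemma.

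The second, and main, ingredient is the upper semicontinuity of $\Lambda\mapsto\dim_{\mathbb R}\mathfrak{hol}_0(\Phi(\Lambda))$ on $A$. I would invoke the jet-parametrization of the linearized problem (Proposition \ref{linjetparam}), which shows that every $V\in\mathfrak{hol}_0(\Phi(\Lambda))$ is determined by its $4$-jet and is reconstructed from it through coefficients depending continuously (indeed rationally) on $\Lambda$. This realizes $\mathfrak{hol}_0(\Phi(\Lambda))$, as a real vector space, isomorphically to the kernel of a finite-dimensional real-linear map $T(\Lambda)$ whose matrix entries are continuous functions of $(\Lambda,\overline\Lambda)$; the equations cutting out this kernel are obtained by expanding \eqref{infdef} in jets. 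Since the rank of a continuously varying matrix is lower semicontinuous, $\dim_{\mathbb R}\ker T(\Lambda)$ is upper semicontinuous, and hence there is a neighborhood $U$ of $\Lambda_0$ on which $\dim_{\mathbb R}\mathfrak{hol}_0(\Phi(\Lambda))\le\dim_{\mathbb R}\mathfrak{hol}_0(\Phi(\Lambda_0))=\ell$.

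Putting the two ingredients together, for any submanifold $N\subset A$ with $N\cap U\neq\emptyset$ and any $\Lambda_1\in N\cap U$ we obtain
\[ \dim_{\mathbb R}N=\dim_{\mathbb R}T_{\Lambda_1}N\le\dim_{\mathbb R}\mathfrak{hol}_0(\Phi(\Lambda_1))\le\ell, \]
which is the assertion. I expect the genuine content to lie entirely in the second ingredient: the upper semicontinuity rests on the jet-parametrization of $\mathfrak{hol}_0(H)$ together with its \emph{uniform} (continuous-in-$\Lambda$) character, which is precisely the structural result deferred to Section \ref{linearizedproblem}. Without that uniformity one could not rule out a dimension jump along a nearby submanifold $N$ that does not pass through $\Lambda_0$ — and it is exactly to accommodate such $N$ (rather than only submanifolds through $\Lambda_0$) that the neighborhood $U$ is needed in the statement.
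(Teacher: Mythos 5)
Your proposal is correct, and its overall skeleton coincides with the paper's: both rest on (a) the upper semicontinuity of $\Lambda\mapsto\dim_{\mathbb R}\mathfrak{hol}_0(\Phi(\Lambda))$ — which the paper quotes as Corollary \ref{semicont} combined with continuity of $\Phi$, and which you re-derive from Proposition \ref{linjetparam} in essentially the same way — together with (b) injectivity of $D\Phi(\Lambda_1)$ on $T_{\Lambda_1}N$, and (c) the containment $D\Phi(\Lambda_1)(T_{\Lambda_1}N)\subset\mathfrak{hol}_0(\Phi(\Lambda_1))$ of Lemma \ref{contain}. The genuine difference is in step (b). The paper argues by contradiction: from Theorem \ref{jetparam} the map $\Phi|_N$ itself is injective (since $j_0^4$ inverts it), and if $\ker D\Phi(\Lambda)$ were nontrivial at every point of $N$, the rank theorem for Banach manifolds (\cite[Theorem 6.3.34]{BER2}), applied at a point of minimal kernel dimension, would produce a positive-dimensional submanifold on which $\Phi$ is constant — a contradiction; this yields injectivity of $D\Phi$ only at some (in fact a dense set of) points, which suffices. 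You instead differentiate the identity $j_0^4\circ\Phi|_N=\iota_N$, using that $j_0^4$ is a continuous \emph{linear} map into the finite-dimensional space $J_0^4$, so the chain rule gives $j_0^4\circ D\Phi(\Lambda_1)=\mathrm{id}$ on $T_{\Lambda_1}N$ and hence injectivity of $D\Phi(\Lambda_1)$ at \emph{every} point. Both arguments exploit the same fact (that $j_0^4$ is a left inverse of $\Phi$), but you use it at the level of derivatives rather than at the level of the map; your version is more elementary (no rank theorem needed) and gives the stronger pointwise conclusion, whereas the paper's contradiction argument would survive in situations where an injective $\Phi$ admits no continuous linear left inverse. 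One small caveat: your parenthetical claim that the coefficients of the linearized jet parametrization depend ``rationally'' on $\Lambda$ is not justified — Proposition \ref{linjetparam} produces them through implicit-function-theorem constructions and only guarantees continuous dependence — but continuity is all your semicontinuity argument uses, so this does not affect the proof.
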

\begin{proof}
First, we note that $\dim_{\mathbb R}\mathfrak {hol}_0 (\Phi(\Lambda) ) \leq \ell$ for all $\Lambda$ in a neighborhood of $\Lambda_0$ in $A$: 
this is a consequence of Corollary \ref{semicont} and of the continuity of the map $\Phi:A \to (\mathbb C\{z,w\})^3$.

As noted above, if we shrink the neighborhood of $\Lambda_0$ we can assume that all the $\Phi(\Lambda) $ have a common radius of convergence $R$, so we can assume that $N$ is a submanifold of this smaller neighborhood and that $\Phi$ actually maps $N$ into the Banach space ${\rm Hol}(\overline{B_R(0)}, \mathbb C^3)$, and that 
$\dim_{\mathbb R}\mathfrak {hol}_0 (\Phi(\Lambda) ) \leq \ell$ for all $\Lambda\in N$. 
Also observe that by  Theorem \ref{jetparam} the $4$-th jet of $\Phi(\Lambda) $ coincides with $\Lambda$ for any $\Lambda\in N$: in particular, the map $\Phi :N \to {\rm Hol}(\overline{B_R(0)}, \mathbb C^3)$ is injective.


Suppose now that the minimum of the dimension of the kernel of $D\Phi(\Lambda) : T_{\Lambda}N \to T_{\Phi(\Lambda) }(\mathbb C\{z,w\})^3$ for $\Lambda\in N$ is at least $1$. Then by the rank theorem (see for example \cite[Theorem 6.3.34]{BER2} for a version valid in the setting of Banach manifolds) there exists a submanifold $N'$ of $N$, of positive dimension, such that $\Phi(\Lambda_1) = \Phi(\Lambda_2) $ for all $\Lambda_1,\Lambda_2\in N'$, which contradicts the injectivity of $\Phi$. 

It follows that the linear map $D\Phi(\Lambda) $ is injective for some $\Lambda\in N$ (indeed, on a dense set), which implies $\dim N = \dim D\Phi(\Lambda) (T_\Lambda(N))$. From Lemma \ref{contain} we have that $D\Phi(\Lambda) (T_\Lambda(N)) \subset \mathfrak {hol}_0 (\Phi(\Lambda) )$, hence $\dim N \leq \ell$.
\end{proof}
The main goal of this section are the following results, providing sufficient conditions for local rigidity which depend on the space of infinitesimal deformations defined above: this allows to treat the local rigidity problem in a linear way. We first state a result which is valid for a general target manifold $M'\subset\mathbb C^3$:

\begin{theorem}\label{suffcon1}
Let $M,M'$ be as in Theorem \ref{jetparam}, let $A$ be defined as in (\ref{defas}), and let $\Lambda_0\in A$ be such that $\dim_{\mathbb R} \mathfrak {hol}_0 (\Phi(\Lambda_0)) = 0$. Then $\Phi(\Lambda_0)$ is locally rigid. \end{theorem}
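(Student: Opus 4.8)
The plan is to reduce the statement, via the parametrization of Theorem~\ref{jetparam}, to the assertion that $\Lambda_0$ is an \emph{isolated} point of the analytic set $A = j_0^4(\mathcal F_2)$. Recall from Lemma~\ref{Ginv} that $\Phi\colon A \to \mathcal F_2$ is an equivariant homeomorphism; consequently, if I can show that $\{\Lambda_0\}$ is open in $A$, then $H := \Phi(\Lambda_0)$ is an isolated point of $\mathcal F_2$, and it only remains to promote this to local rigidity in $\mathcal F$ through Remark~\ref{rem:equcon}. Thus the whole argument is organized around proving isolatedness of $\Lambda_0$ in $A$.

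The key input is Lemma~\ref{dim10} applied with $\ell = \dim_{\mathbb R}\mathfrak{hol}_0(\Phi(\Lambda_0)) = 0$. This yields a neighborhood $U$ of $\Lambda_0$ in $J_0^4$ such that every regular real-analytic submanifold $N \subset A$ meeting $U$ has dimension $0$. To convert this into isolatedness I would exploit that $A$ is semialgebraic: by Theorem~\ref{jetparam} it is cut out inside the open set $\{q \neq 0\}$ by the vanishing of the real polynomials $c_j$, and by Noetherianity finitely many of them already define the same zero set, so $A$ is a basic semialgebraic set. Such a set admits, near $\Lambda_0$, a finite decomposition into smooth (Nash) submanifolds; every piece whose closure contains $\Lambda_0$ is a submanifold of $A$ meeting every neighborhood of $\Lambda_0$, in particular meeting $U$, hence is $0$-dimensional by the lemma. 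Therefore the local dimension of $A$ at $\Lambda_0$ is $0$, and a semialgebraic set of dimension $0$ is finite; shrinking $U$ to a neighborhood $U'$ avoiding the finitely many other points, I obtain $A \cap U' = \{\Lambda_0\}$.

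Finally, I would transport and extend this conclusion. Since $\Phi$ is a homeomorphism, $H$ is isolated in $\mathcal F_2$; and because $\mathcal F_2$ is open in $\mathcal F$, I can pick a neighborhood $W$ of $H$ in $(\mathbb C\{z,w\})^3$ with $\mathcal F \cap W \subset \mathcal F_2$ and, after shrinking, $\mathcal F_2 \cap W = \{H\}$, so that $\mathcal F \cap W = \{H\}$. By the characterization in Remark~\ref{rem:equcon}, every element of $\mathcal F$ close enough to $H$ (namely, only $H$ itself) is $G$-equivalent to $H$, which is precisely local rigidity.

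I expect the one non-formal step to be the middle paragraph --- the semialgebraic-geometry fact that a set all of whose submanifold strata near a point are $0$-dimensional must be discrete there; this is exactly where the hypothesis that $A$ is a genuine (semi)algebraic set, and not an arbitrary closed set, is essential. Everything else is bookkeeping with the homeomorphism $\Phi$ and the dimension estimate already packaged in Lemma~\ref{dim10}; the real content of the theorem has in effect been front-loaded into that lemma and into the jet-parametrization of $\mathfrak{hol}_0(H)$ used to prove it.
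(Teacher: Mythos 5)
Your proposal is correct and follows essentially the same route as the paper: apply Lemma~\ref{dim10} with $\ell=0$ to rule out positive-dimensional submanifolds of $A$ meeting a neighborhood of $\Lambda_0$, conclude that $\Lambda_0$ is isolated in $A$, and transfer this through the equivariant homeomorphism $\Phi$ of Lemma~\ref{Ginv} together with Remark~\ref{rem:equcon}. The only difference is that where the paper simply asserts that the real-analytic set $U\cap A$, containing no positive-dimensional manifold, must be discrete, you justify that step explicitly via Noetherianity and a finite Nash stratification of the semialgebraic set $A$ --- a correct filling-in of detail rather than a genuinely different argument.
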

\begin{proof}
By Lemma \ref{dim10}, there is a neighborhood $U$ of $\Lambda_0$ in $J_0^4$ such that $U\cap A$ does not contain any manifold of positive 
dimension: it follows that $U\cap A$ is a discrete set. By Remark~\ref{rem:equcon}, $\Phi(\Lambda_0)$ is locally rigid.
\end{proof}

In the next result we look more closely at the case when $M'$ is Levi-nondegenerate, and we relax the assumption $\dim_{\mathbb R} \mathfrak {hol}_0 (\Phi(\Lambda_0)) = 0$. We use the same scheme of the proof of Theorem \ref{suffcon1}, but the presence of a positive dimensional isotropy group ${\rm Aut}_0(M')$ (even non-compact if $M'\cong \mathbb H^3_{\varepsilon}$) means that some additional care is required.

\begin{theorem}\label{suffcon2}
Let $M,M'$ be as in Theorem \ref{jetparam} with $M'$ Levi-nondegenerate, let $A$ be defined as in (\ref{defas}), and let $\Lambda_0\in A$ be such that $\dim_{\mathbb R} \mathfrak {hol}_0 (\Phi(\Lambda_0)) = \dim_{\mathbb R}\mathfrak{hol}_0(M') = \ell$. Then $\Phi(\Lambda_0)$ is locally rigid.
\end{theorem}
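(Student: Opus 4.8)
The plan is to show that near $\Lambda_0$ the analytic set $A$ reduces to a single orbit of the target group $G'=\{\id\}\times{\rm Aut}_0(M')$, and then to transport this back to $\mathcal F$ using that $\Phi$ is an equivariant homeomorphism (Lemma~\ref{Ginv}) together with the reformulation of local rigidity in Remark~\ref{rem:equcon}. The heuristic is a dimension count: by Lemma~\ref{dim10} the hypothesis $\dim_{\mathbb R}\mathfrak{hol}_0(\Phi(\Lambda_0))=\ell$ bounds the dimension of $A$ near $\Lambda_0$ by $\ell$, while freeness of the $G'$-action makes the orbit $G'\cdot\Lambda_0$ already $\ell$-dimensional, leaving no room transverse to it. Writing $H:=\Phi(\Lambda_0)$, I would first normalize coordinates: local rigidity and both dimensions in the hypothesis are unaffected by biholomorphic changes of coordinates in source and target (which merely conjugate the $G$-action), so by Lemma~\ref{coord2} I may assume $H$ has the form $(z,w)\mapsto(z,F(z,w),w)$ and ${\rm Aut}_0(M')\subseteq{\rm Aut}_0(\mathbb H^3_\varepsilon)$, so that $\Lambda_0\in A\subseteq E_\varepsilon$ (Remark~\ref{furthermore}). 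Then Lemma~\ref{free} shows the stabilizer of $\Lambda_0$ is already trivial in $\{\id\}\times{\rm Aut}_0(\mathbb H^3_\varepsilon)$, so $G'$ acts freely, and $G'$ acts properly: by Lemma~\ref{proact} (restricted to the closed subgroup $G'$) if $M\not\cong\mathbb H^2$, by Lemma~\ref{proact2} if $M\cong\mathbb H^2$ and $M'\not\cong\mathbb H^3_\varepsilon$ (so that ${\rm Aut}_0(M')\subseteq U_\varepsilon$), the remaining case of two hyperquadrics being covered in \cite{Re}. By freeness and properness the orbit $\mathcal O=G'\cdot\Lambda_0$ is a closed embedded submanifold of dimension $\dim G'=\dim_{\mathbb R}\mathfrak{hol}_0(M')=\ell$.

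Next I would set up a local product structure by hand, in order to stay in the analytic category. Choosing an affine—hence real-analytic—transversal $S\subset J_0^4$ through $\Lambda_0$ of dimension complementary to $T_{\Lambda_0}\mathcal O$, freeness makes the orbit map an immersion at $\Lambda_0$, so $\psi(g,s)=g\cdot s$ has invertible differential at $(\id,\Lambda_0)$ and, by the inverse function theorem, restricts to a diffeomorphism from a neighborhood of $(\id,\Lambda_0)$ in $G'\times S$ onto a neighborhood $V$ of $\Lambda_0$. Since $A$ is $G'$-invariant (Lemma~\ref{Ginv}), within this chart we have $g\cdot s\in A\iff s\in A$, so $\psi$ identifies $A\cap V$ with $G'_{\mathrm{loc}}\times(A\cap S)$ and, in particular, exhibits $A\cap S$ as a real-analytic subset of $S$.

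The heart of the argument, and the step I expect to be the main obstacle, is to prove that after shrinking $S$ one has $A\cap S=\{\Lambda_0\}$. I would argue by contradiction using the neighborhood $U$ of $\Lambda_0$ furnished by Lemma~\ref{dim10}, in which every submanifold of $A$ has dimension at most $\ell$. If $\Lambda_0$ were not isolated in the real-analytic set $A\cap S$, then arbitrarily close to it this set would contain a submanifold $N_0$ with $\dim N_0\geq 1$, which I may take inside $U$; sweeping $N_0$ by the free action, $\psi(G'_{\mathrm{loc}}\times N_0)$ is a submanifold of $A$ of dimension $\ell+\dim N_0>\ell$ which meets $U$ (it contains $N_0$), contradicting Lemma~\ref{dim10}. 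The delicate points are precisely that $S$ be genuinely real-analytic, so that ``$A\cap S$ not discrete'' legitimately produces a positive-dimensional submanifold, and that the non-compactness of ${\rm Aut}_0(M')$ when $M'\cong\mathbb H^3_\varepsilon$ is rendered harmless by having localized everything near $\Lambda_0$.

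Granting $A\cap S=\{\Lambda_0\}$, I obtain $A\cap V=G'_{\mathrm{loc}}\cdot\Lambda_0\subseteq G\cdot\Lambda_0$. Pulling back by the continuous jet map $j_0^4$ gives a neighborhood $\widetilde U$ of $H$ in $(\mathbb C\{z,w\})^3$, which I may shrink so that $\mathcal F\cap\widetilde U=\mathcal F_2\cap\widetilde U$ (since $2$-nondegeneracy is open and holds at $\Lambda_0$). Any $\hat H\in\mathcal F\cap\widetilde U$ then has $j_0^4\hat H\in A\cap V$, say $j_0^4\hat H=g\cdot\Lambda_0$ with $g\in G$, whence $\hat H=\Phi(j_0^4\hat H)=g\cdot H$ by equivariance of $\Phi$. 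Thus every map in $\mathcal F$ close enough to $H$ is $G$-equivalent to $H$, and Remark~\ref{rem:equcon} yields that $H=\Phi(\Lambda_0)$ is locally rigid.
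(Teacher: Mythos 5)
Your proof is correct, and its skeleton coincides with the paper's: normalize via Lemma~\ref{coord2}, obtain freeness from Lemma~\ref{free}, decompose a neighborhood of $\Lambda_0$ into orbit directions times a transversal, use Lemma~\ref{dim10} together with the stratification of real-analytic sets to show the transversal meets $A$ only in $\Lambda_0$, and transport the resulting inclusion $A\cap V\subseteq G\cdot\Lambda_0$ back to $\mathcal F$ through the equivariant homeomorphism $\Phi$ (Lemma~\ref{Ginv}) and Remark~\ref{rem:equcon}. The genuine difference is the device producing the product structure. The paper applies the real-analytic local slice theorem for free and proper actions of \cite{DK} to $G'_{\id}$ acting on $E_\varepsilon$; this requires properness and freeness on an invariant neighborhood, and yields an invariant tube with $A\cap\mathcal V=G'_{\id}\cdot\Lambda_0$, followed by a comparison of the $G'_{\id}$- and $G_{\id}$-orbits. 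You instead apply the inverse function theorem to $(g,s)\mapsto g\cdot s$ on $G'\times S$ for an affine transversal $S$, which needs only a trivial stabilizer at the single point $\Lambda_0$ (Lemma~\ref{free}) to make the orbit map an immersion. Your route is therefore leaner: properness enters your write-up only to make the orbit closed and embedded, a fact you never actually use, so Lemmas~\ref{proact} and \ref{proact2} could be dropped from your argument altogether, whereas they are indispensable for the paper's slice theorem (both for the slice theorem itself and for promoting freeness at $\Lambda_0$ to freeness near $\Lambda_0$). What you give up is the $G'$-invariance of the chart, so you only get $A\cap V=G'_{\mathrm{loc}}\cdot\Lambda_0$ rather than a full orbit; but this suffices, since the reverse inclusion $(G\cdot\Lambda_0)\cap V\subseteq A\cap V$ is automatic from the $G$-invariance of $A$ — which also shows that the paper's extra step identifying $G'_{\id}\cdot\Lambda_0$ with $G_{\id}\cdot\Lambda_0$ is not logically needed. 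Finally, you are slightly more careful than the paper on two small points: the explicit case split guaranteeing properness when $M\cong\mathbb H^2$ or both manifolds are hyperquadrics, and the shrinking of $\widetilde U$ so that $\mathcal F\cap\widetilde U=\mathcal F_2\cap\widetilde U$ before invoking Remark~\ref{rem:equcon}.
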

\begin{proof}
By Lemma \ref{coord2} we can choose coordinates in $\mathbb C^2$ and $\mathbb C^3$ 
such that $\Phi(\Lambda_0)(z,w) = \widetilde H(z,w) = (z,F(z,w),w)$ and ${\rm Aut}_0(M')$ 
is a subgroup of ${\rm Aut}_0(\mathbb H^3_\varepsilon)$. Let $G'=\{\id\}\times {\rm Aut}_0(M')$, and denote by $G'_{\id}$ the connected component of the identity in $G'$. By Lemma \ref{free}, the action 
of $G'_{\id}$ is free on a neighborhood of $\Lambda_0$ 
in $E_\varepsilon$, hence also on a $G'_{\id}$-invariant neighborhood. By Lemmas \ref{proact} 
and \ref{proact2} this action is also proper. 

We can thus apply the real-analytic version of the local slice theorem for free and proper actions (see \cite{DK}): put $m = \dim_{\mathbb R} E_\varepsilon - \ell$. There exist
\begin{itemize}
\item a (germ of) $m$-dimensional real-analytic submanifold $S$ ($\Lambda_0\in S$) of $E_\varepsilon$, transversal to the orbits of $G'_{\id}$, called the \emph{local slice},
\item a $G'_{\id}$-invariant neighborhood $\mathcal V$ of $\Lambda_0$ in $E_\varepsilon$, containing $S$,
\item a real-analytic $G'_{\id}$-equivariant diffeomorphism $\varphi : B^m\times G'_{\id} \to \mathcal V$, where $B^m\subset \mathbb R^m$ is the real $m$-dimensional ball,
\end{itemize}
such that $\varphi(0,\id) = \Lambda_0$ and $\varphi|_{B^m\times \{\id\}}$ induces a diffeomorphism between $B^m$ and $S$.

Let $A'=A\cap S$: then $A'$ is  a real-analytic subset of $\mathcal V$. Moreover $A'$ is $0$-dimensional. 
Indeed, otherwise it must contain a real $1$-dimensional curve $\gamma$, 
embedded in (an open subdomain of) $\mathcal V$. Then $\gamma'=\varphi^{-1}(\gamma)$ 
is a $1$-dimensional curve contained in $B^m\times \{\id\}$, 
and $N'=\{(x,g')\in B^m\times G'_{\id}: x\in \gamma' \}$ is an $(\ell+1)$-dimensional 
submanifold of $B^m\times G'_{\id}$. Defining $N= \varphi(N')$, we 
would have that $N$ is $(\ell+1)$-dimensional and is contained in
 $A$ (because of the $G'_{\id}$-invariance of $A$, see Lemma \ref{Ginv}), contradicting Lemma \ref{dim10}.

Since $A'$ is a real-analytic set of dimension $0$, it is a discrete subset of $\mathcal V$, which we can assume to reduce to only $\Lambda_0$ up to shrinking $\mathcal V$ and $S$.  It follows from the slice theorem that $A\cap \mathcal V$ consists of the orbit $G'_{\id}\cdot \Lambda_0$ of $G'_{\id}$ through $\Lambda_0$. Let $G_{\id}$ be the connected component of the identity in $G$: then the orbit $G'_{\id}\cdot \Lambda_0$ must also coincide with the orbit $G_{\id}\cdot \Lambda_0$ of $G_{\id}$ through $\Lambda_0$. Indeed, the latter is also a (connected) submanifold by the properness of the $G$-action, since when the action is proper the orbit is diffeomorphic to $G_{\id}/{\rm stab}(\Lambda_0)$, where ${\rm stab}(\Lambda_0)$ is the compact stabilizer of $\Lambda_0$, and moreover it contains $G'_{\id}\cdot \Lambda_0$ but by Lemma \ref{dim10} it cannot be of higher dimension.

In summary, the arguments above show that there exists a neighborhood $V$ of $\Lambda_0$ such that $A\cap V = (G\cdot \Lambda_0)\cap V$: since the map $\Lambda \mapsto \Phi(\Lambda) $ is an equivariant homeomorphism between $A$ and $\mathcal F_2$ (see Lemma \ref{Ginv}), there is also a neighborhood $U$ of $\Phi (\Lambda_0)$ such that $\mathcal F_2\cap U = (G\cdot \Phi (\Lambda_0))\cap U$. By Remark~\ref{rem:equcon}, $\Phi (\Lambda_0)$ is locally rigid.
\end{proof}

\section{Solving the linearized problem}\label{linearizedproblem}
In this section our aim is to understand the properties of the space of solutions of a linear equation of the kind 
(\ref{infdef}). The arguments work in a more general setting than the one considered in the previous sections, thus 
we will consider a generic minimal real-analytic  CR submanifold $M\subset \mathbb C^N$ of 
CR dimension $n$ and real codimension $d$ (so that $N=n+d$), $0\in M$, 
in normal coordinates around $0$ (not necessarily the ones in 
which $M$ is written as in (\ref{oftheform})). Choosing coordinates
 $(z,w)\in \mathbb C^n_z\times \mathbb C^d_w=\mathbb C^N$, this means 
 (cf. \cite{BER2}) that the complexification $\mathcal M\subset \mathbb C^{2N}_{z,\chi,w,\tau}$ of $M$ is 
 given by the equation
\[w = Q(z,\chi, \tau) \ \ {\rm (or\ equivalently,} \ \tau = \overline Q(\chi,z, w) {\rm )}\]
for a suitable germ of holomorphic map $Q:\mathbb C^{2n+d}\to\mathbb C^{d}$ satisfying the properties
\[Q(z,0,\tau) \equiv Q(0,\chi,\tau) \equiv \tau, \ \ \ \ Q(z,\chi, \overline Q (\chi, z, w)) \equiv w.\]
In these coordinates, the (respectively CR and anti-CR) vector fields tangent to $\mathcal M$ are given by
\[L_j = \frac{\partial}{\partial \chi_j} + \sum_{k=1}^d\overline Q^k_{\chi_j}(\chi,z,w) \frac{\partial}{\partial \tau_k }, \ \ \overline L_j = \frac{\partial}{\partial z_j} + \sum_{k=1}^d Q^k_{z_j}(z,\chi,\tau) \frac{\partial}{\partial w_k }, \]
 for $1\leq j \leq n$. It will also be convenient to consider the following vector fields, which are neither CR nor anti-CR but are nevertheless tangent to $\mathcal M$:
\[T_\ell = \frac{\partial}{\partial w_\ell} + \sum_{k=1}^d \overline Q^k_{w_\ell}(\chi,z,w) \frac{\partial}{\partial \tau_k}, \ \ \ \ S_j = \frac{\partial}{\partial z_j} + \sum_{k=1}^d\overline Q^k_{z_j}(\chi,z,w) \frac{\partial}{\partial \tau_k} \]
where $1\leq \ell \leq d$, $1\leq j\leq n$. 

Let $R\subset  \mathbb C\{Z',\zeta'\}^{N'}$ be a real subspace. We will say that a holomorphic map
 $H=(H_1,H_2,\ldots,H_{N'})\in (\mathbb C\{z,w\})^{N'}$
  is {\em $\kappa$-nondegenerate} with respect 
  to $R$ if  the following condition is 
satisfied: $\kappa$ is the smallest integer such that there exists a sequence
 $(\iota_1,\ldots,\iota_{N'})$ of 
 multiindices $\iota_\ell\in\mathbb N^n$ such that $0\leq|\iota_\ell|\leq \kappa$ for  $ \ell = 1, \dots,  N'$, and $r^1 , \dots, r^{N'} \in R$ such that, for 
 $r^j = (r^j_1,\dots , r^j_{N'} )$ 
\begin{equation} \label{folcon}
s = \det \left(\begin{array}{ccc} 
 L^{\iota_1}r^1_1(H(z,w),\overline H(\chi,\tau)) & \cdots & L^{\iota_1}r^1_{N'}(H(z,w),\overline H(\chi,\tau)) \\  \vdots & \ddots & \vdots \\
 L^{\iota_{N'}} r^{N'}_1(H(z,w),\overline H(\chi,\tau)) & \cdots & L^{\iota_{N'}} r_{N'}^{N'}(H(z,w),\overline H(\chi,\tau))\end{array}\right),
\end{equation}
we have $s(0)\neq 0$. If $N=2$, $N'=3$, $\kappa=2$, and $R$ is
generated by  $(\rho_{z'_1},\rho_{z'_2}, \rho_{w'})$ for a certain real defining function $\rho\in \mathbb C\{z,w,\chi,\tau\}$ 
 and $H$ is an embedding of $M$ into $\{\rho=0\}$, the condition above  amounts to $H$ being $2$-nondegenerate. However, in this section we will not need to make these assumption.

We are going to consider a generalization of (\ref{infdef}). More precisely, for
 fixed $R$, we want to study the space of the
  $(\alpha_1(z,w), \ldots, \alpha_{N'}(z,w))\in (\mathbb C\{z,w\})^{N'}$,
   $\alpha_j(0,0)=0$, which solve the linear equations
\begin{multline}
\label{lineq}
\sum_{j=1}^{N'} r_j(H(z,w),\overline H(\chi,\tau))\alpha_j(z,w) + \overline r_j(\overline H(\chi,\tau), H(z,w))\overline \alpha_j(\chi,\tau) = 0 \\ {\rm for}\  w = Q(z,\chi,\tau), \quad r=(r_1,\dots,r_{N'}) \in R,
\end{multline}
and how this space depends on the data $(H, Q)\in (\mathbb C\{z,w\})^{N'}\times (\mathbb C\{z,\chi,\tau\})^d$. We will approach this problem with the techniques of reflection which are normally used to obtain jet parametrization results for holomorphic maps.

\subsection{Reflection identity}
The first step is to differentiate (\ref{lineq}) along $L$ enough times, in order to obtain the linear system

\[ 
\sum_{j=1}^{N'} L^{\iota_k}r^k_j(H,\overline H)\alpha_j(z,w) = - L^{\iota_k}\sum_{j=1}^{N'}\overline r^k_j(\overline H, H)\overline \alpha_j(\chi,\tau),\quad k = 1,\dots,N' 
 \]
for $w = Q(z,\chi,\tau)$, and where $r^1,\dots,r^{N'}$ are chosen 
such that \eqref{folcon} is satisfied. We solve this linear system for $\alpha_1(z,w),\ldots, \alpha_{N'}(z,w)$ as a rational function of $L^\iota \overline r^j(\overline H, H)$, $L^\iota r^j(H,\overline H)$, $L^\iota \overline \alpha_\ell(\chi,\tau)$  which is in fact linear in the last entries, and non-singular at $0$. This is possible because of (\ref{folcon}). More precisely, we get the following (here and in the subsequent lemmas $j_1\in\mathbb N^n$ and $j_2\in\mathbb N^d$ are multiindices):

\begin{lemma}\label{univpol}
There are universal polynomial maps $p^{h,j_1,j_2}_\ell$ (where $0\leq |j_1|+|j_2|\leq \kappa$, $1\leq \ell,h \leq N'$) satisfying the following property. For any $\mathcal M= \{w = Q(z,\chi,\tau)\}$, $R$, $H$ as above (with $H$ fulfilling (\ref{folcon})), any solution $(\alpha_1, \ldots, \alpha_{N'})$ of the equation (\ref{lineq})  satisfies the identity 
\begin{equation}\label{reflid}
\alpha_\ell(z,w) = \frac{1}{s} \sum_{\substack{0\leq |j_1| + |j_2| \leq \kappa\\ 1\leq h \leq N'}} p_\ell^{h,j_1,j_2}(\partial^\kappa r^k, \partial^\kappa \overline r^k, \partial^\kappa \overline Q)  \frac {\partial^{|j_1|+|j_2|} \overline \alpha_h}{\partial \chi^{j_1}\partial \tau^{j_2} }(\chi,\tau), \  w = Q(z,\chi,\tau),
\end{equation}
for all $1\leq \ell \leq N'$, where the symbols $\partial^{\kappa} r^k, \partial^{\kappa} \overline r^k, \partial^\kappa \overline Q$ ($1\leq k \leq N'$) represent the set of all derivatives  of order at most $\kappa$ of $r^k(H(z,w), \overline H(\chi,\tau)), \overline r^k(\overline H(\chi,\tau), H(z,w))$, $\overline Q(\chi, z, w)$, and $s$ is given by (\ref{folcon}).
\end{lemma}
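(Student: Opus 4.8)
The plan is to carry out exactly the differentiation-and-solve procedure sketched immediately before the statement, and then to extract the universality of the resulting coefficients from the purely algebraic structure of the computation. For the given data $(\mathcal M, R, H)$ I would first fix multiindices $\iota_1,\dots,\iota_{N'}$ with $|\iota_k|\leq\kappa$ and elements $r^1,\dots,r^{N'}\in R$ realizing (\ref{folcon}), so that the determinant $s$ is nonzero at $0$. The crucial point is that each $L_j=\partial_{\chi_j}+\sum_k \overline Q^k_{\chi_j}\partial_{\tau_k}$ differentiates only in the $(\chi,\tau)$ variables and therefore annihilates every function holomorphic in $(z,w)$; in particular $L^{\iota}\alpha_j(z,w)=0$ as soon as $|\iota|\geq 1$. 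Applying $L^{\iota_k}$ to (\ref{lineq}) and using the Leibniz rule, every derivative landing on the factor $\alpha_j(z,w)$ of the first sum thus vanishes except the zeroth-order one, and we obtain, for $w=Q(z,\chi,\tau)$,
\[ \sum_{j=1}^{N'}\left(L^{\iota_k}r^k_j(H,\overline H)\right)\alpha_j(z,w) = -L^{\iota_k}\sum_{j=1}^{N'}\overline r^k_j(\overline H,H)\,\overline\alpha_j(\chi,\tau),\qquad k=1,\dots,N'. \]
This is a linear system in the unknowns $\alpha_1(z,w),\dots,\alpha_{N'}(z,w)$ whose coefficient matrix is precisely the one in (\ref{folcon}), with determinant $s$.

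Since $s$ is invertible as a germ at $0$, the system determines the $\alpha_\ell(z,w)$ uniquely, and I would solve it by Cramer's rule, writing each $\alpha_\ell$ as $1/s$ times a sum of cofactors of the coefficient matrix multiplied by the right-hand sides $-L^{\iota_k}\sum_j \overline r^k_j(\overline H,H)\overline\alpha_j(\chi,\tau)$. It then remains to expand each $L^{\iota_k}[\overline r^k_j(\overline H,H)\overline\alpha_j(\chi,\tau)]$ by Leibniz. As the $L_j$ are first-order operators whose coefficients are built from derivatives of $\overline Q$, iterating them distributes at most $\kappa$ derivatives in $\chi,\tau$ onto the factor $\overline\alpha_j$; collecting terms according to the surviving derivative $\partial^{|j_1|+|j_2|}\overline\alpha_h/\partial\chi^{j_1}\partial\tau^{j_2}$ (necessarily with $0\leq|j_1|+|j_2|\leq\kappa$) produces exactly the shape of (\ref{reflid}). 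The coefficient multiplying each such derivative is a polynomial in the cofactor entries (which are derivatives up to order $\kappa$ of $r^k,\overline r^k$ along $H,\overline H$) and in the derivatives of $\overline Q$ generated by expanding the $L^{\iota_k}$, i.e.\ precisely a polynomial in $\partial^\kappa r^k,\partial^\kappa\overline r^k,\partial^\kappa\overline Q$; this is the asserted $p^{h,j_1,j_2}_\ell$.

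The one genuinely delicate point, and what I would argue most carefully, is the universality claim: the $p^{h,j_1,j_2}_\ell$ must be the same polynomials for all $(\mathcal M,R,H)$. This holds because both operations generating them --- the cofactor expansion in Cramer's rule and the iterated Leibniz/chain-rule expansion of $L^{\iota_k}$ --- are formal combinatorial procedures depending only on $N,N',d,\kappa$ and the fixed multiindices $\iota_k$, and not on the particular series $Q,H$ or the subspace $R$; the data enter only through the arguments $\partial^\kappa r^k,\partial^\kappa\overline r^k,\partial^\kappa\overline Q$ substituted into these universal polynomials. The remaining things to verify are bookkeeping: that no derivative of $\overline\alpha_h$ of order exceeding $\kappa$ appears (guaranteed by $|\iota_k|\leq\kappa$), and that the single factor $1/s$ can be pulled out so that the $p^{h,j_1,j_2}_\ell$ are genuinely polynomial, as displayed in (\ref{reflid}).
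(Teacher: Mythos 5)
Your proof is correct and follows essentially the same route as the paper: the paper's own justification (given in the discussion immediately preceding the lemma) is precisely to apply $L^{\iota_k}$ to \eqref{lineq}, use that the $L_j$ annihilate functions of $(z,w)$ so that the resulting linear system has coefficient matrix \eqref{folcon} with determinant $s$ nonvanishing at $0$, and solve for the $\alpha_\ell(z,w)$ as expressions linear in the derivatives of the $\overline\alpha_h$. Your explicit Cramer's rule and Leibniz/chain-rule bookkeeping (including the observation that the resulting coefficients are universal polynomials depending only on the combinatorics of the expansion and the fixed multiindices $\iota_k$) simply fills in the details the paper leaves implicit.
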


By repeatedly differentiating (\ref{reflid}) along the fields $T$, $S$, and noting that $S^{j_1}T^{j_2}\alpha_\ell(z,w) = \alpha_{\ell, z^{j_1}w^{j_2}}(z,w)$ for all multiindices $j_1\in \mathbb N^n$, $j_2\in \mathbb N^d$, we immediately obtain the following:

\begin{cor}
Let $m\in \mathbb N$.
There are universal polynomial maps $p^{h,j_1,j_2}_{\ell,n_1,n_2}$,
where $0\leq |j_1|+|j_2|\leq m + \kappa$, $0\leq |n_1| + |n_2| \leq m$, $1\leq \ell,h \leq N'$, satisfying the following property. For any $\mathcal M= \{w = Q(z,\chi,\tau)\}$, $R$, $H$ as above (with $H$ fulfilling (\ref{folcon})), any solution $(\alpha_1, \ldots, \alpha_{N'})$ of the equation (\ref{lineq})  satisfies the identity 
\begin{equation}\label{diffreflid}
\frac{\partial^{|n_1| + |n_2|} \alpha_\ell}{\partial z^{n_1}\partial w^{n_2}}(z,w) = \frac{1}{s^{m+1}} \sum_{\substack{0\leq |j_1| + |j_2| \leq m + \kappa\\ 1\leq h \leq N'}} p_{\ell,n_1,n_2}^{h,j_1,j_2}(\partial^{m+\kappa} r^k, \partial^{m+\kappa} \overline r^k, \partial^{m+\kappa} \overline Q)  \frac {\partial^{|j_1|+|j_2|} \overline \alpha_h}{\partial \chi^{j_1}\partial \tau^{j_2} }(\chi,\tau), 
\end{equation}
$w = Q(z,\chi,\tau)$, for all $1\leq \ell \leq N'$, where the symbols $\partial^{m+\kappa} r^k, \partial^{m+\kappa}\overline r^k, \partial^{m+\kappa} \overline Q$ ($1\leq k \leq N'$) represent the set of all derivatives of $r^k(H(z,w), \overline H(\chi,\tau)), \overline r^k(\overline H(\chi,\tau), H(z,w)), \overline Q(\chi,z,w)$ of order at most $m+\kappa$, and $s$ is as in (\ref{folcon}).
\end{cor}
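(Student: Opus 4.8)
The plan is to prove the identity \eqref{diffreflid} by induction on $m=|n_1|+|n_2|$, applying the tangential fields $S_j$ and $T_\ell$ to the reflection identity \eqref{reflid} of Lemma~\ref{univpol} one at a time. The base case $m=0$ is exactly \eqref{reflid}. The essential structural point that makes the differentiation legitimate is that $S_j$ and $T_\ell$ are tangent to $\mathcal M=\{w=Q(z,\chi,\tau)\}$: an identity valid on $\mathcal M$ stays valid on $\mathcal M$ after applying either field, which is precisely why one uses $S_j,T_\ell$ rather than the ambient $\partial/\partial z_j,\partial/\partial w_\ell$.

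First I would record how each side transforms under a single differentiation. On the left, $\alpha_\ell$ and all of its $(z,w)$-derivatives are independent of $(\chi,\tau)$, so the $\partial/\partial\tau_k$-components of $S_j$ and $T_\ell$ annihilate them; thus $S_j$ acts as $\partial/\partial z_j$ and $T_\ell$ as $\partial/\partial w_\ell$, giving $S^{n_1}T^{n_2}\alpha_\ell=\partial^{|n_1|+|n_2|}\alpha_\ell/\partial z^{n_1}\partial w^{n_2}$ regardless of the order of application. On the right I would apply the product and chain rules to a typical summand $s^{-p}\,p^{h,j_1,j_2}_{\ell}(\ldots)\,\frac{\partial^{|j_1|+|j_2|}\overline\alpha_h}{\partial\chi^{j_1}\partial\tau^{j_2}}$ and record three effects: (a) differentiating $s^{-p}$ yields $s^{-(p+1)}$ times $S_j(s)$ or $T_\ell(s)$, raising the power of $s$ by one; (b) differentiating $s$ and the polynomial $p^{h,j_1,j_2}_\ell$ produces, through the universal expressions for $S_j,T_\ell$, new universal polynomials in the derivatives of $r^k,\overline r^k,\overline Q$ of order one higher; (c) since $\overline\alpha_h(\chi,\tau)$ is independent of $(z,w)$, only the $\partial/\partial\tau_k$-components of $S_j,T_\ell$ reach it, adding a single $\tau$-derivative and a factor $\overline Q^k_{z_j}$ or $\overline Q^k_{w_\ell}$ that is absorbed into the $\overline Q$-data.

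With this bookkeeping the inductive step is immediate: after $m$ differentiations the power of $s$ in the denominator is at most $m+1$, the derivatives of $r^k,\overline r^k,\overline Q$ that appear are of order at most $m+\kappa$, and the $(\chi,\tau)$-derivatives of $\overline\alpha_h$ are of total order at most $m+\kappa$ (the $\chi$-order never exceeds $\kappa$, while the $\tau$-order grows by at most one per step). To place every term over the common denominator $s^{m+1}$ I would multiply the numerators of the lower-order terms by the appropriate power of $s$; since $s$ is itself the universal polynomial \eqref{folcon} in the order-$\kappa$ derivatives of $r^k,\overline r^k,\overline Q$, this keeps each coefficient a universal polynomial $p^{h,j_1,j_2}_{\ell,n_1,n_2}$ in those derivatives up to order $m+\kappa$.

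The only point requiring genuine care is the \emph{universality} of the resulting polynomials, i.e.\ their independence of the particular $(M,H,R)$. This holds because the starting polynomials $p^{h,j_1,j_2}_\ell$ of Lemma~\ref{univpol} are universal and the fields $S_j,T_\ell$ enter only through their fixed coefficients $\overline Q^k_{z_j},\overline Q^k_{w_\ell}$, so that each differentiation sends universal data to universal data; everything else is the routine order- and power-tracking described above, and no idea beyond Lemma~\ref{univpol} is needed.
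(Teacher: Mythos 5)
Your proof is correct and takes essentially the same approach as the paper: the paper's own (one-sentence) proof obtains \eqref{diffreflid} precisely by repeatedly differentiating the reflection identity \eqref{reflid} along the tangential fields $S$ and $T$, using the observation $S^{n_1}T^{n_2}\alpha_\ell(z,w) = \partial^{|n_1|+|n_2|}\alpha_\ell/\partial z^{n_1}\partial w^{n_2}(z,w)$. Your inductive bookkeeping---the power of $s$ rising to at most $m+1$, the derivative orders of $r^k,\overline r^k,\overline Q$ rising to at most $m+\kappa$, the fact that only the $\partial/\partial\tau_k$-components of $S_j,T_\ell$ reach $\overline\alpha_h$, and the clearing of denominators using that $s$ is itself universal---just makes explicit the details the paper leaves to the reader.
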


 \subsection{Iteration along the Segre sets} The next step is to evaluate (\ref{reflid}) along certain subvarieties of $\mathbb C^N$, called \emph{Segre sets}. In order to do so we need to introduce some notation. For any $j\in \mathbb N$ let $(x_1,\ldots, x_j)$ ($x_\ell\in \mathbb C^n$) be coordinates for $\mathbb C^{nj}$. The \emph{Segre map} of order $q\in \mathbb N$ is the map $S^q_0:\mathbb C^{nq}\to \mathbb C^N$ inductively defined as follows:
 \[S^1_0(x_1) = (x_1,0), \ \ S^q_0(x_1,\ldots,x_q) = \left (x_1, Q\left(x_1,\overline S^{q-1}_0(x_2,\ldots,x_q) \right)\right)\]
where we denote by $\overline S^{q-1}_0$ the power series whose coefficients are conjugate to the ones of $S^{q-1}_0$. The $q$-th Segre set $\mathcal S^q_0\subset \mathbb C^N$ is then the image of the map $S^q_0$: we have for instance
 \[\mathcal S^1_0 = \{(z,0)\in \mathbb C^N: z\in \mathbb C^n\},\]
 \[\mathcal S^2_0 = \{(z, Q(z,\chi,0))\in \mathbb C^N: z,\chi\in \mathbb C^n \}. \]
In what follows we will use the notation $x^{[j;k]} = (x_j,\ldots,x_k)$. Fixed $q\in \mathbb N$, we begin by putting $z=x_1$, $\chi=x_2$, $\tau=\overline Q(x_2,  S^{q-2}_0(x^{[3;q]}))$ -- and hence $w = Q(x_1,\overline S^{q-1}_0(x^{[2;q]}))$ -- in the identity (\ref{reflid}), in order to obtain
\begin{equation}\label{firsteval}
\alpha_\ell(S^q_0(x^{[1;q]})) = \frac{1}{s} \sum_{\substack{0\leq |j_1| + |j_2| \leq \kappa\\ 1\leq h \leq N'}} p_\ell^{h,j_1,j_2}(\partial^\kappa r^k, \partial^\kappa \overline r^k, \partial^\kappa \overline Q)  \frac {\partial^{j_1+j_2} \overline \alpha_h}{\partial \chi^{j_1}\partial \tau^{j_2} }(\overline S^{q-1}_0(x^{[2;q]}))
\end{equation}
where all the functions $\partial^\kappa r^k(H,\overline H), \partial^\kappa \overline r^k(\overline H,H), \partial^\kappa \overline Q$ are evaluated at $z=x_1$, $\chi=x_2$, $\tau=\overline Q(x_2,  S^{q-2}_0(x^{[3;q]}))$, $w = Q(x_1,\overline S^{q-1}_0(x^{[2;q]}))$. This equation means that one can determine the value of any solution of (\ref{lineq}), at least along $\mathcal S^q_0$, by knowing the values of its derivatives along $\mathcal S^{q-1}_0$. To determine the latter, we put $\chi=x_2$, $z=x_3$, $w=Q(x_3,  \overline S^{q-3}_0(x^{[4;q]}))$ -- and hence $\tau = \overline Q(x_2, S^{q-2}_0(x^{[3;q]}))$ -- in the conjugate of (\ref{diffreflid}):
\begin{equation}\label{secondeval}
\frac{\partial^{|n_1| + |n_2|} \overline \alpha_\ell}{\partial \chi^{n_1}\partial \tau^{n_2}}(\overline S^{q-1}_0(x^{[2;q]})) =\end{equation}
\[= \frac{1}{\overline s^{m+1}} \sum_{\substack{0\leq |j_1| + |j_2| \leq m + \kappa\\ 1\leq h \leq N'}} \overline p_{\ell,n_1,n_2}^{h,j_1,j_2}(\partial^{m+\kappa} r^k, \partial^{m+\kappa} \overline r^k, \partial^{m+\kappa} \overline Q)  \frac {\partial^{|j_1|+|j_2|} \alpha_h}{\partial z^{j_1}\partial w^{j_2} }(S^{q-2}_0(x^{[3;q]})), \]
where the functions $\partial^{m+\kappa} r^k, \partial^{m+\kappa} \overline r^k, \partial^{m+\kappa} \overline Q$ are evaluated at $\chi=x_2$, $z=x_3$, $w=Q(x_3,  \overline S^{q-3}_0(x^{[4;q]}))$, $\tau = \overline Q(x_2, S^{q-2}_0(x^{[3;q]}))$. 
By substituting (\ref{secondeval}) for $m=\kappa$ into (\ref{firsteval}), we get that the values of the $\alpha_\ell$ along $\mathcal S^q_0$ are determined by the values of their $2\kappa$-th order jet along $\mathcal S^{q-2}_0$. Iterating this argument $q$ times (note that in the last step it is enough to put $\tau=0$, $z=x_q$, $\chi=w=0$ in (\ref{diffreflid}) to express the derivatives of $\alpha_\ell$ along $\mathcal S^1_0$ in terms of the derivatives at $(0,0)$) we prove the following:

\begin{lemma}\label{secseg}
Fix $q\in \mathbb N$. There are universal polynomial maps $q^{h,j_1,j_2}_\ell$, where $0\leq |j_1|+|j_2|\leq q\kappa$, $1\leq \ell,h \leq N'$, satisfying the following property. For any $\mathcal M= \{w = Q(z,\chi,\tau)\}$, $R$, $H$ as above (with $H$ fulfilling (\ref{folcon})), any solution $(\alpha_1, \ldots, \alpha_{N'})$ of the equation (\ref{lineq})  satisfies the identity
\begin{equation}\label{secondsegre}
\alpha_\ell(S^q_0(x^{[1;q]})) = 
\begin{cases}\displaystyle
 \frac{1}{S}  \sum_{\substack{0\leq |j_1| + |j_2| \leq q\kappa\\ 1\leq h \leq N'}} q_\ell^{h,j_1,j_2}(\partial^{q\kappa} r^k, \partial^{q\kappa} \overline r^k, \partial^{q\kappa} \overline Q, \partial^{q\kappa} Q)  \frac {\partial^{|j_1|+|j_2|} \alpha_h}{\partial z^{j_1}\partial w^{j_2} }(0,0) &q  \text{ even}, \\
 \displaystyle
 \frac{1}{S} \sum_{\substack{0\leq |j_1| + |j_2| \leq q\kappa\\ 1\leq h \leq N'}} q_\ell^{h,j_1,j_2}(\partial^{q\kappa} r^k, \partial^{q\kappa} \overline r^k, \partial^{q\kappa} \overline Q, \partial^{q\kappa} Q)  \frac {\partial^{|j_1|+|j_2|} \bar \alpha_h}{\partial \chi^{j_1}\partial \tau^{j_2} }(0,0) &q  \text{ odd}. 
 \end{cases}
\end{equation}
In the expression (\ref{secondsegre}), $S$ is the product of $q ( 1 + \frac{\kappa(q-1)}{2})$ factors, each one of which is equal to either $s$ or $\overline s$ (with $s$ as in (\ref{folcon})), evaluated either at $(0,0)$ or along $(z,w,\chi,\tau) = (S^{q-j}(x^{[j+1;q]}), \overline S^{q-j-1}(x^{[j+2;q]}))$ for some $0\leq j\leq q-1$. Furthermore, the symbols $\partial^{q\kappa} r^k, \partial^{q\kappa} \overline r^k$, $\partial^{q\kappa} \overline Q, \partial^{q\kappa} Q$ represent all the partial derivatives of $r,\overline r,Q,\overline Q$ of order less or equal to $q\kappa$, evaluated along $(H\circ S^{q-j}(x^{[j+1;q]}),\overline H \circ  \overline S^{q-j-1}(x^{[j+2;q]}) )$
for some $0\leq j\leq q-1$.
\end{lemma}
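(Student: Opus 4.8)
The plan is to prove \eqref{secondsegre} by iterating the two evaluated reflection identities \eqref{firsteval} and \eqref{secondeval}, descending one Segre set at a time from $\mathcal S^q_0$ down to $\mathcal S^0_0=\{0\}$, while simultaneously tracking three quantities at each intermediate level: the order of the derivatives of $\alpha$ (resp.\ $\overline\alpha$) that occur, the parity of the number of conjugations performed so far, and the power of $s$ (resp.\ $\overline s$) that has accumulated in the denominator. The first rung is \eqref{firsteval}, which is \eqref{reflid} read along the Segre map (the case $m=0$): it writes $\alpha_\ell(S^q_0(x^{[1;q]}))$ through the derivatives of $\overline\alpha_h$ of order at most $\kappa$ along $\mathcal S^{q-1}_0$, over a single factor $s$ evaluated at the level-$q$ point. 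The descent rung is \eqref{secondeval}, the conjugate of \eqref{diffreflid} with parameter $m$: it expresses each derivative of $\overline\alpha_\ell$ of order $\le m$ along $\mathcal S^{q-1}_0$ through derivatives of $\alpha_h$ of order $\le m+\kappa$ along $\mathcal S^{q-2}_0$, over a factor $\overline s^{\,m+1}$ evaluated at the level-$(q-1)$ point. The point that makes the scheme work is that these evaluation loci depend only on the Segre level, not on the summation indices, so all the terms created in a single substitution share one common denominator.

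First I would run the induction. At the $j$-th descent step ($j=1,\dots,q$) I apply \eqref{diffreflid} (or its conjugate) with $m=(j-1)\kappa$: this lowers the Segre level by one, raises the maximal derivative order from $(j-1)\kappa$ to $j\kappa$, toggles the conjugation, and multiplies the denominator by a factor carrying exponent $(j-1)\kappa+1$, evaluated at the Segre point of the level from which one descends. After $q$ steps one reaches $\mathcal S^0_0=\{0\}$ with derivatives of order at most $q\kappa$ at $(0,0)$; the terminal step is the degenerate case in which one substitutes $\tau=0$, $z=x_q$, $\chi=w=0$, producing the jet at the origin directly. Since the total number of conjugations equals $q$, the final jet is un-conjugated exactly when $q$ is even and conjugated when $q$ is odd, which accounts precisely for the two branches of \eqref{secondsegre}.

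Next I would assemble the bookkeeping. Summing the denominator exponents over all steps gives
\[
\sum_{j=1}^{q}\bigl((j-1)\kappa+1\bigr)=\kappa\,\frac{q(q-1)}{2}+q=q\left(1+\frac{\kappa(q-1)}{2}\right),
\]
which is exactly the stated number of factors of $s$ or $\overline s$ in $S$, each evaluated at the Segre point of the corresponding level as listed in the statement. Because the polynomials $p^{h,j_1,j_2}_\ell$ of Lemma~\ref{univpol} and of the corollary following it are universal, that is, independent of $Q$, $R$ and $H$, and because the only operations performed are the substitution of one such identity into another together with the evaluation of the derivatives of $r^k,\overline r^k,Q,\overline Q$ along the Segre maps, the resulting coefficients $q^{h,j_1,j_2}_\ell$ are again universal polynomials in these derivatives. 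The un-conjugated derivatives $\partial^{q\kappa}Q$ enter here precisely because the Segre maps, along which everything is evaluated, are built by alternately composing with $Q$ and $\overline Q$, whereas the single-step identities saw only $\overline Q$.

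The hard part will not be any single estimate but the combinatorial bookkeeping itself: verifying that after each substitution the newly created terms genuinely share the single common denominator $S$, so that $S$ may be pulled outside the sum, and confirming that the maximal derivative order grows by exactly $\kappa$ per step rather than faster. Both facts rest on the observation that in \eqref{firsteval} and \eqref{secondeval} the evaluation loci and the order-raising are governed solely by the Segre level; making this rigorous will require an induction hypothesis that records simultaneously, at each intermediate level, the derivative order, the conjugation parity, the denominator, and the evaluation points.
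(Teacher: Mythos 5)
Your proposal follows essentially the same route as the paper: iterate the evaluated reflection identities \eqref{firsteval} and \eqref{secondeval} down the Segre sets with $m=(j-1)\kappa$ at the $j$-th step (the terminal step being the degenerate substitution $\tau=0$, $z=x_q$, $\chi=w=0$), and your bookkeeping of the conjugation parity, the derivative orders, and the denominator count $\sum_{j=1}^{q}\bigl((j-1)\kappa+1\bigr)=q\bigl(1+\tfrac{\kappa(q-1)}{2}\bigr)$ matches the statement exactly. The proof is correct as proposed.
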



\subsection{Jet parametrization} In this section we will use the notation
 $J_0^k = \faktor{\mathfrak{m} \mathbb C \{z,w\}^{N'} }{\mathfrak{m}^{k+1}}$
with $(z,w)\in \mathbb C^n_z \times \mathbb C^d_w=\mathbb C^N$. Although Lemma
\ref{secseg} shows that one can determine any solution of (\ref{lineq}) along
$\mathcal S^q_0$ by its $q\kappa$-th jet $\Lambda\in J_0^{q\kappa}$ at
$(0,0)$, it gives no information about which jets $\Lambda$ actually give rise
to a solution. Our aim now is to prove a jet parametrization result along the
same lines as Theorem \ref{jetparam}; however, we will also need to keep track
of the dependence of the solutions on the initial data $(H,Q)$. If a real-
analytic submanifold  $M$, defined in normal coordinates by the function $Q$
as above, is of finite commutator type, then for some  $\mathbf t<d+1$
the Segre map  $S^{\mathbf t}_0$ is generically finite by
the minimality criterion of  Baouendi, Ebenfelt and Rothschild. We can therefore
define a finite number  $\nu (M) = \nu (Q)$ as the minimum order of vanishing of
minor of maximal size of  the  Jacobian of $S^{\mathbf t}_0$.

\begin{prop}\label{linjetparamgen} Let $M\subset \mathbb C^N$ be a generic real-analytic
submanifold as above, $\mathcal M=\{w = \tilde Q(z,\chi,\tau)\}$, and $H$,$R$ as
above. Then there is a neighborhood $\Omega_2$ of $\tilde Q$ in $\mathbb
C\{z,\chi,\tau\}^d$, a neighborhood $\Omega_1$ of $H$ in $\mathbb
C\{z,w\}^{N'}$, and, for all $1\leq \ell,h \leq N'$,
 $0\leq |j_1|+|j_2|\leq \tau \kappa$, 
 continuous maps
   \[  \Omega_1\times \Omega_2\ni (H,Q) = p \to
K(p)^{h,j_1,j_2}_\ell = K^{h,j_1,j_2}_\ell(z,w) \in \mathbb C\{z,w\}, \]
\[\Omega_1\times \Omega_2\ni (H,Q) = p \to (a_{n_1}(p)^{h,j_1,j_2}_\ell,
b_{n_1}(p)^{h,j_1,j_2}_\ell)  \in \mathbb C^2, \ \ n_1\in \mathbb N,\] with
the following properties. For any given $(H,Q)\in \Omega_1\times \Omega_2$
with $Q$ satisfying $Q(z,\chi, \bar Q (\chi,z,w)) = w$ and
 $\nu (Q) = \nu (\tilde Q)$, there exists a solution
  $\alpha = (\alpha_1,\ldots,\alpha_{N'})\in \mathbb C\{z,w\}^{N'}$
   to (\ref{lineq}) whose
$\tau\kappa$-jet at $0$ is $\Lambda_0 = \Lambda_{0,h}^{j_1,j_2}\in
J_0^{\tau\kappa}$ if and only if
 \begin{equation}\label{eq:coeffexpand}
\sum_{\substack{0\leq |j_1| + |j_2|\leq \tau\kappa \\ 1\leq h \leq N'}}
(a_{n_1,\ell}^{h,j_1,j_2} \Lambda_{0,h}^{j_1,j_2} + b_{n_1,\ell}^{h,j_1,j_2}
\overline \Lambda_{0,h}^{j_1,j_2})=0 \ \ {\rm for\ all}\ n_1\in \mathbb N, \
1\leq \ell \leq N'. \end{equation}
 In this case, the (unique) solution
$\alpha$ is given by $\alpha=(K_1(z,w,\Lambda_0), \ldots,K_{N'}(z,w,\Lambda_0))$,
 where \[K_\ell(z,w,\Lambda_0):=\sum_{\substack{0\leq |j_1| + |j_2|\leq \tau\kappa \\ 1\leq h \leq N'}}  K^{h,j_1,j_2}_\ell(z,w)
\Lambda_{0,h}^{j_1,j_2} \] 
for all $1\leq \ell \leq N'$.
 \end{prop}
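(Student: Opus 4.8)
\emph{Strategy.} The plan is to upgrade the reflection-and-Segre machinery of Lemma~\ref{secseg} from a \emph{necessary} identity satisfied by every solution into an honest parametrization, by inverting the Segre map to descend to a germ at $0$ and then isolating, via linear conditions, the jets that genuinely arise from solutions. Throughout I write $\mathbf t$ for the integer (denoted $\tau$ in the statement) for which $S^{\mathbf t}_0$ is dominant, of generic rank $N$, by the minimality criterion of Baouendi--Ebenfelt--Rothschild.

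\emph{Step 1: the Segre identity.} First I would apply Lemma~\ref{secseg} with $q=\mathbf t$. For any solution $\alpha=(\alpha_1,\dots,\alpha_{N'})$ of \eqref{lineq} with $\mathbf t\kappa$-jet $\Lambda_0=j_0^{\mathbf t\kappa}\alpha$, this records
\[
\alpha_\ell\bigl(S^{\mathbf t}_0(x^{[1;\mathbf t]})\bigr)=\frac{1}{S}\sum_{\substack{0\le|j_1|+|j_2|\le \mathbf t\kappa\\ 1\le h\le N'}} q^{h,j_1,j_2}_\ell\,\Lambda^{j_1,j_2}_{0,h}
\]
(up to conjugation of $\Lambda_0$ when $\mathbf t$ is odd), where $S$ and the $q^{h,j_1,j_2}_\ell$ are the explicit expressions in $\partial^{\mathbf t\kappa}(r^k,\overline r^k,Q,\overline Q)$ evaluated along iterated Segre maps. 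The two features to record are that the right-hand side is linear in the jet and that, by Remark~\ref{implfunct}, its coefficients are convergent power series in $x$ depending continuously on $(H,Q)$.

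\emph{Step 2: descent by Segre inversion (the main obstacle).} The core technical step is to push this identity down from $\mathbb C^{n\mathbf t}$ to the germ at $0$ in $\mathbb C^N$. Since $S^{\mathbf t}_0$ is dominant, I would restrict $x$ to a generic $N$-dimensional affine slice on which the maximal minor of $\jac S^{\mathbf t}_0$ is not identically zero and solve $(z,w)=S^{\mathbf t}_0(x)$ for $x$ by the inverse function theorem; substituting the section recovers $\alpha_\ell(z,w)\in\mathbb C\{z,w\}$. The sole difficulty is the denominator $S$ of \eqref{secondsegre} together with the Jacobian minor produced by the inversion, both of which vanish at the base point. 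This is exactly where the hypothesis $\nu(Q)=\nu(\tilde Q)$ is indispensable: it fixes the order of vanishing of the maximal minor of $\jac S^{\mathbf t}_0$, keeping the pole orders constant as $(H,Q)$ ranges over $\Omega_1\times\Omega_2$, so that the quotients remain genuine elements of $\mathbb C\{z,w\}$ and, by Remark~\ref{implfunct}, vary continuously. Controlling these denominators uniformly is the real heart of the argument and the step I expect to be the main obstacle. Its output is the family $K^{h,j_1,j_2}_\ell(z,w)$ and the linear candidate
\[
K_\ell(z,w,\Lambda_0)=\sum_{\substack{0\le|j_1|+|j_2|\le \mathbf t\kappa\\ 1\le h\le N'}} K^{h,j_1,j_2}_\ell(z,w)\,\Lambda^{j_1,j_2}_{0,h},
\]
defined for \emph{every} jet $\Lambda_0$ and depending continuously on $(H,Q)$.

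\emph{Step 3: the compatibility conditions and uniqueness.} By construction $K(\cdot,\Lambda_0)$ is the only candidate compatible with Lemma~\ref{secseg}, so uniqueness of a solution with prescribed jet is immediate from the dominance of $S^{\mathbf t}_0$: two solutions with the same jet agree along $\mathcal S^{\mathbf t}_0$ and hence coincide. It remains to characterize which $\Lambda_0$ yield a bona fide solution, namely to impose that $\alpha:=K(\cdot,\Lambda_0)$ reproduce $\Lambda_0$ as its $\mathbf t\kappa$-jet and actually satisfy \eqref{lineq}. Since $K$ is linear in $\Lambda_0$ while \eqref{lineq} couples $\alpha$ with $\overline\alpha$, substituting $K(\cdot,\Lambda_0)$ into \eqref{lineq} and expanding the residual in a Taylor series at $0$ produces a countable family of $\mathbb R$-linear equations in $(\Lambda_0,\overline\Lambda_0)$; enumerating these Taylor coefficients by $n_1\in\mathbb N$ defines the pairs $(a_{n_1},b_{n_1})$, which again depend continuously on $(H,Q)$ by the preceding continuity statements, and whose simultaneous vanishing is precisely \eqref{eq:coeffexpand}. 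Conversely, when \eqref{eq:coeffexpand} holds the residual vanishes identically, so $K(\cdot,\Lambda_0)$ is the desired (unique) solution, completing the equivalence.
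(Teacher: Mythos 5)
Your Step 1 and Step 3 match the paper's proof, and your uniqueness argument via dominance of the Segre map is fine. The genuine gap is in Step 2, which is indeed the heart of the matter but which you do not actually carry out, and which cannot be carried out the way you propose. Inverting $(z,w)=S^{\mathbf t}_0(x)$ by the inverse function theorem is only possible at points where a maximal minor of $\jac S^{\mathbf t}_0$ is nonzero, and by the very definition of $\nu(Q)$ this Jacobian is degenerate \emph{at the origin}. So your slice inversion produces a local inverse near some generic image point $S^{\mathbf t}_0(x_0)\neq 0$, and substituting it into the right-hand side of \eqref{secondsegre} yields, for an \emph{arbitrary} jet $\Lambda_0$, a function defined only off the critical image, with singularities at $0$ in general; it is simply not an element of $\mathbb C\{z,w\}$. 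This is not a technicality: in the special case worked out in Proposition \ref{linjetparam}, the analogous expression $\Psi_\ell^{h,j_1,j_2}(z,w/B(z))$ has genuine poles (the terms $z^{m_1-2m_2}$ with $m_1<2m_2$), and the extension to a germ at $0$ holds only when the linear conditions $d_n=0$ of part (i) are imposed. So your claim that the construction yields germs $K^{h,j_1,j_2}_\ell\in\mathbb C\{z,w\}$ ``defined for every jet $\Lambda_0$'' is false as stated, and the continuity in $(H,Q)$ is likewise unaddressed. You also misidentify the source of the difficulty: the denominator $S$ of \eqref{secondsegre} does \emph{not} vanish at the base point, since $s(0)\neq 0$ by \eqref{folcon} (and this persists after shrinking $\Omega_1\times\Omega_2$); the only obstruction is the degeneracy of the Segre map itself.

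The paper closes exactly this gap by invoking Theorem~5 of \cite{JL}: there is a neighborhood $\mathcal V$ of $S^{\mathbf t}_0$ and a holomorphic operator $\Phi:\mathcal V\times\mathbb C\{x^{[1;\mathbf t]}\}\to\mathbb C\{z,w\}$, \emph{linear in the second argument and defined for all inputs}, such that $\Phi(A,g\circ A)=g$ whenever $\nu(A)=\nu(\tilde Q)$. Setting $K_\ell(z,w,\Lambda)=\Phi\bigl(S^{\mathbf t}_0(Q),\psi_\ell(\Lambda)\bigr)$ then automatically produces germs at $0$ for every jet $\Lambda$, varying continuously in $(H,Q)$, and the left-inverse property combined with Lemma \ref{secseg} gives $\alpha_\ell=K_\ell(\cdot,j_0^{\mathbf t\kappa}\alpha)$ for every actual solution $\alpha$. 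This is also the correct role of the hypothesis $\nu(Q)=\nu(\tilde Q)$: it is the condition under which $\Phi(S^{\mathbf t}_0(Q),\cdot)$ really is a left inverse of composition, not a device for ``keeping pole orders constant.'' Without this division theorem (or an equivalent explicit regular/singular splitting as in the proof of Proposition \ref{linjetparam}, which in the general setting of arbitrary $N$, $d$, and degeneracy structure is precisely the hard content of \cite{JL}), your outline does not yield the parametrization asserted in the proposition.
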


\begin{proof} As recalled above,  the Segre map $S^{\mathbf t}_0:\mathbb C^{\tau n}\to \mathbb C^N$ is generically of full rank, so that $\nu (\tilde Q) < \infty$.

We can thus appeal to Theorem~5 from \cite{JL}: as a special case of that result, we have that there exist a neighborhood $\mathcal V$ of $S^{\mathbf t}_0$ in $\mathbb C\{x^{[1;\mathbf t]}\}^N$ and a holomorphic map 
\[\Phi:\mathcal V\times \mathbb C\{x^{[1;\mathbf t]}\} \to \mathbb C\{z,w\}\]
such that $\Phi(A,g\circ A) = g$ for all $A\in \mathcal V$ with $\nu(A) = \nu(\tilde Q)$, and for all $g\in\mathbb C\{z,w\}$. Furthermore, the map $\Phi$ is linear in the second factor.

Since the map $\mathbb C\{z,\chi,\tau\}^d\ni Q \to S^{\mathbf t}_0(Q) \in \mathbb C\{x^{[1;\mathbf t]}\}^N$ is continuous, we can choose $\Omega_2$ in such a way that $S^{\mathbf t}_0(Q)\in \mathcal V$ for all $Q\in \Omega_2$. If we  select $\Omega_1$ small enough, we can also ensure that $s(0)\neq 0$ for all $(H,Q)\in \Omega_1\times \Omega_2$, where $s$ is given by  (\ref{folcon}). For $1\leq \ell \leq N'$ and for any $(H,Q)\in \Omega_1\times \Omega_2$, define then $\psi_\ell\in \mathbb C\{x^{[1;\mathbf t]}\}$ as the right hand side of (\ref{secondsegre}) for $q=\mathbf t$, that is,
\[\psi_{\ell}(\Lambda)(x^{[1;\mathbf t]})=\frac{1}{S} \sum_{\substack{0\leq |j_1| + |j_2| \leq \tau\kappa\\ 1\leq h \leq N'}} q_\ell^{h,j_1,j_2}(\partial^{\mathbf t\kappa} r^k, \partial^{\mathbf t\kappa} \overline r^k, \partial^{\mathbf t\kappa} \overline Q, \partial^{\mathbf t\kappa} Q)  \Lambda_h^{j_1,j_2}\]
for all $\Lambda\in J^{\mathbf t\kappa}_0$. We have that $\psi_\ell$ is analytic and depends continuously on $(H,Q)\in \Omega_1\times \Omega_2$ because $s(0)\neq 0$ (hence $S(0)\neq 0$ as well).

Finally, for all $(H,Q)\in \Omega_1\times \Omega_2$ set
 $K_\ell(z,w,\Lambda) = \Phi(S^{\mathbf t}_0(Q), \psi_{\ell}(\Lambda)(x^{[1;\mathbf t]}))$: 
 by the
properties of $\Phi$, we have that $K_\ell$ is linear in $\Lambda$, and is
continuous in $(H,Q)$. It follows from Lemma \ref{secseg} and from Theorem~5
in \cite{JL} that
 $\alpha_\ell(z,w) = K_\ell(z,w, j^{\mathbf t\kappa}_0\alpha)$
  whenever $\alpha$ is a solution of (\ref{lineq}). The
remaining statement can be proved by setting
 $\alpha_\ell =
K_\ell(z,w,\Lambda)$ in (\ref{lineq}) and expanding it as a power series in
$(z,\chi,\tau)$: the coefficients of this power series depend continuously on
$(H,Q)\in \Omega_1\times \Omega_2$ and linearly on
 $\Lambda,\overline
\Lambda$, so that the linear equations (\ref{eq:coeffexpand}) can be obtained
by setting all the coefficients to $0$ (see also the proof of Prop.
\ref{linjetparam}).  \end{proof}

The result above allows to deduce the following generalization of Theorem~2.1 in \cite{CH}:

\begin{cor}
\label{cor:chohangeneral}
Let $M$ be a generic real-analytic submanifold of $\mathbb C^N$, $0\in M$, and let $H$ be a $\kappa$-nondegenerate holomorphic embedding of $M$ into a generic real-analytic submanifold $M'\subset \mathbb C^{N'}$, $N'\geq N$. Suppose that the Segre map $S^{\mathbf t}_0$ is 
generically finite. Then the space of infinitesimal deformations of $H$ is finite dimensional, and any infinitesimal deformation is determined by its $\mathbf t\kappa$-jet at $0$. In particular, 
any infinitesimal deformations of $H$ is determined by its  $(d+1)\kappa$-jet at $0$, and
if $M$ is strictly pseudoconvex, any infinitesimal deformations 
of $H$ is determined by its  $2 \kappa$-jet at $0$ .
\end{cor}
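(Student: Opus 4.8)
The plan is to read the corollary off Proposition \ref{linjetparamgen}, once the space of infinitesimal deformations of $H$ has been identified with the solution space of the linear equation (\ref{lineq}) for an appropriate choice of $R$. First I would fix a defining system $(\rho^1,\dots,\rho^{d'})$ for the generic target $M'\subset\mathbb C^{N'}$ and let $R$ be the real subspace of $\mathbb C\{Z',\zeta'\}^{N'}$ spanned by the complex gradients $\rho^\mu_{Z'}$. The infinitesimal deformation condition, that the real part of $V=\sum_j\alpha_j\,\partial/\partial Z'_j$ be tangent to $M'$ along $H(M)$, is a real-analytic identity on the generic submanifold $M$; complexifying $(z,w)$ and $(\chi,\tau)$ along $\mathcal M=\{w=Q(z,\chi,\tau)\}$ and using that $M$ is generic (so that a real-analytic function vanishing on $M$ corresponds to a holomorphic function vanishing on $\mathcal M$) turns it into exactly the holomorphic equation (\ref{lineq}) for this $R$. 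Under this dictionary, the hypothesis that $H$ is $\kappa$-nondegenerate is precisely the assertion that a choice of multiindices $(\iota_1,\dots,\iota_{N'})$ with $|\iota_\ell|\le\kappa$ and elements $r^1,\dots,r^{N'}\in R$ can be made so that the determinant $s$ in (\ref{folcon}) is nonzero at $0$; hence all hypotheses of Proposition \ref{linjetparamgen} are in force.

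With this identification the jet-determination statement is immediate: the proposition recovers any solution $\alpha$ of (\ref{lineq}) from its $\mathbf t\kappa$-jet $\Lambda_0$ through the linear formula $\alpha_\ell=K_\ell(z,w,\Lambda_0)$. Thus the linear map $\alpha\mapsto j_0^{\mathbf t\kappa}\alpha$ is injective on $\mathfrak{hol}_0(H)$, which gives at once that every infinitesimal deformation is determined by its $\mathbf t\kappa$-jet, and that $\mathfrak{hol}_0(H)$ embeds as a real-linear subspace of the finite-dimensional jet space $J_0^{\mathbf t\kappa}$—in fact precisely the subspace cut out by the real-linear equations (\ref{eq:coeffexpand})—so that $\mathfrak{hol}_0(H)$ is finite dimensional.

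The last two assertions are specializations obtained by bounding $\mathbf t$. Since $M$ is generic and, by the assumption that $S^{\mathbf t}_0$ is generically finite, of finite commutator type, the minimality criterion of Baouendi, Ebenfelt, and Rothschild yields $\mathbf t\le d+1$; therefore $\mathbf t\kappa\le(d+1)\kappa$, and a deformation determined by its $\mathbf t\kappa$-jet is a fortiori determined by its $(d+1)\kappa$-jet. When $M$ is strictly pseudoconvex it is in particular a Levi-nondegenerate hypersurface, so $d=1$; here $S^1_0$ maps $\mathbb C^n$ into $\mathbb C^{N}=\mathbb C^{n+1}$ and cannot be generically finite, while $S^2_0$ already has generically full rank, forcing $\mathbf t=2$ and hence determination by the $2\kappa$-jet.

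The step requiring the most care is the first one: checking that the reality condition (\ref{infdef}), which lives on $M$, complexifies to exactly the holomorphic identity (\ref{lineq}) on $\mathcal M$, and that the $\kappa$-nondegeneracy of the embedding $H$ into $M'$ supplies a valid choice of data making $s(0)\neq0$ in (\ref{folcon}). Once this correspondence is in place, the three conclusions follow by directly reading off Proposition \ref{linjetparamgen} together with the standard bound on the number of Segre sets needed for a minimal generic submanifold.
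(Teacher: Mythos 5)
Your proposal is correct and follows essentially the same route as the paper: the corollary is read off directly from Proposition \ref{linjetparamgen} (after identifying $\mathfrak{hol}_0(H)$ with the solution space of \eqref{lineq} for $R$ spanned by the complex gradients of the defining functions of $M'$, with $\kappa$-nondegeneracy supplying \eqref{folcon}), and the last two assertions follow from the bounds $\mathbf t \leq d+1$ and $\mathbf t = 2$ in the strictly pseudoconvex case. The paper leaves the identification and the application of the proposition implicit, stating only the bounds on $\mathbf t$; your write-up simply makes those implicit steps explicit.
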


The last two statements follow immediately because $\mathbf t \leq d+1$ and $\mathbf t = 2$ if 
$M$ is strictly pseudoconvex. 

We are now interested in giving another version of the proof of the Proposition \ref{linjetparamgen}. The reason is that, although the second proof is less general than the previous one, it is more suited to concrete computations and gives the outline of the algorithmic steps used later for specific examples. In order to achieve this we will closely follow the proofs of  \cite[Propositions 2.11, 3.1]{BER}, and turn back to the case where $N=2$, $N'=3$, $\kappa=2$ and $M$ is strongly pseudoconvex.

We will denote by $\Omega_1$ the open subset of $(\mathbb C\{z,w\})^3$ consisting of those $H$  which satisfy (\ref{folcon}), and by $\Omega_2$ the open subset of $\mathbb C\{z,\chi,\tau\}$ consisting of those $Q$ which satisfy $Q_{z\chi}(0,0,0)\neq 0$ (i.e.\ such that $M = \{w = Q(z,\chi,\tau)\} $ is strongly pseudoconvex around $0$).

\begin{prop}\label{linjetparam}
In what follows, the indices $j_1,j_2,h,\ell$ satisfy $0\leq j_1 + j_2 \leq 4$ and $1\leq h,\ell \leq 3$. Fix $R \subset (\mathbb C\{z,w,\chi,\tau\})^3$ as before. 
There exist
\begin{itemize}
\item continuous maps 
\[\Omega_1\times \Omega_2\ni (H,Q) = p \to \Psi(p)^{h,j_1,j_2}_\ell = \Psi^{h,j_1,j_2}_\ell(z,t) \in \mathbb C\{z,t\},\]
\[\Omega_1\times \Omega_2\ni (H,Q) = p \to K(p)^{h,j_1,j_2}_\ell = K^{h,j_1,j_2}_\ell(z,w) \in \mathbb C\{z,w\};\]
\item continuous maps
\[\Omega_2 \ni Q \to B_Q = B(z) \in \mathbb C\{z\},\]
\[\Omega_1\times \Omega_2\ni (H,Q) = p \to (d_n(p)^{h,j_1,j_2}_\ell, e_n(p)^{h,j_1,j_2}_\ell, f_n(p)^{h,j_1,j_2}_\ell, g_n(p)^{h,j_1,j_2}_\ell) \in \mathbb C^4, \ \ �n\in \mathbb N\]
\end{itemize}
such that, for any given $(H,Q)\in \Omega_1\times \Omega_2$, there exists a solution $\alpha = (\alpha_1,\alpha_2,\alpha_3)\in (\mathbb C\{z,w\})^3$ to (\ref{lineq}) whose $4$-th jet at $0$ is $\Lambda_0 = \Lambda_{0,h}^{j_1,j_2}\in J_0^4$ if and only if the following conditions hold:
\begin{itemize}
\item[(i)] the maps 
\begin{equation}\label{formalpsi}
(z,w) \to \sum_{\substack{0\leq j_1 + j_2\leq 4\\ 1\leq h \leq 3}}  \Psi^{h,j_1,j_2}_\ell \left (z,\frac{w}{B(z)} \right ) \Lambda_{0,h}^{j_1,j_2}, \ 1\leq \ell \leq 3, 
\end{equation} 
extend holomorphically to a neighborhood of $0$ in $\mathbb C^2$. This happens if and only if 
\[\sum_{\substack{0\leq j_1 + j_2\leq 4\\1\leq h \leq 3 }} d_{n,\ell}^{h,j_1,j_2} \Lambda_{0,h}^{j_1,j_2}=0 \ \ {\rm for\ all}\ n\in \mathbb N, \ 1\leq \ell \leq 3\]
and if and only if 
\[\sum_{\substack{0\leq j_1 + j_2\leq 4\\ 1\leq h \leq 3}}  \Psi^{h,j_1,j_2}_\ell \left (z,\frac{w}{B(z)} \right ) \Lambda_{0,h}^{j_1,j_2} = \sum_{\substack{0\leq j_1 + j_2\leq 4\\ 1\leq h \leq 3}}  K^{h,j_1,j_2}_\ell(z,w)  \Lambda_{0,h}^{j_1,j_2} =: K_\ell(z,w,\Lambda_0);\]
\item[(ii)] the $4$-jet of the map  $(z,w) \to (K_1(z,w,\Lambda_0), K_2(z,w,\Lambda_0), K_3(z,w,\Lambda_0) )$ at $0$ coincides with $\Lambda_0$. If (i) is satisfied, this holds if and only if 
\[\sum_{\substack{0\leq j_1 + j_2\leq 4\\ 1\leq h \leq 3 }} e_{n,\ell}^{h,j_1,j_2} \Lambda_{0,h}^{j_1,j_2}=0 \ \ {\rm for\ all}\ n\in \mathbb N, \ 1\leq \ell \leq 3;\]
\item[(iii)] The triple $(K_1(z,w,\Lambda_0), K_2(z,w,\Lambda_0), K_3(z,w,\Lambda_0))$ solves (\ref{lineq}).    If (i) is satisfied, this holds if and only if
\[\sum_{\substack{0\leq j_1 + j_2\leq 4 \\ 1\leq h \leq 3}} (f_{n,\ell}^{h,j_1,j_2} \Lambda_{0,h}^{j_1,j_2} + g_{n,\ell}^{h,j_1,j_2} \overline \Lambda_{0,h}^{j_1,j_2})=0 \ \ {\rm for\ all}\ n\in \mathbb N, \ 1\leq \ell \leq 3.\]
\end{itemize} 
\end{prop}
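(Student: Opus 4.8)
The plan is to follow the reflection-and-Segre-iteration scheme of \cite[Propositions 2.11, 3.1]{BER}, now specialized to $N=2$, $N'=3$, $\kappa=2$ with $M$ strongly pseudoconvex, so that $n=d=1$ and (by minimality) $\mathbf t=2$. The starting point is Lemma \ref{secseg} with $q=\mathbf t=2$: for any solution $\alpha=(\alpha_1,\alpha_2,\alpha_3)$ of (\ref{lineq}), identity (\ref{secondsegre}) expresses the restriction of each $\alpha_\ell$ to the second Segre set $\mathcal S^2_0$ as a universal expression, linear in the $4$-jet $\Lambda_0$ of $\alpha$ at $0$, with coefficients rational in the jets of $r^k,\overline r^k,Q,\overline Q$ along the Segre parametrization and nonsingular because $s(0)\neq0$. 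Writing $S^2_0(x_1,x_2)=(x_1,Q(x_1,x_2,0))$ and setting $(z,t)=(x_1,x_2)$, this defines the maps $\Psi^{h,j_1,j_2}_\ell(z,t)$, so that $\alpha_\ell(z,Q(z,t,0))=\sum_{h,j_1,j_2}\Psi^{h,j_1,j_2}_\ell(z,t)\Lambda^{j_1,j_2}_{0,h}$. Their continuity in $(H,Q)\in\Omega_1\times\Omega_2$ follows from $s(0)\neq0$ together with the continuity of the operations on convergent power series recorded in Remark \ref{implfunct}.

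Next I would establish condition (i). The strong pseudoconvexity hypothesis $Q_{z\chi}(0,0,0)\neq0$ means $Q(z,\chi,0)=z\chi\,U(z,\chi)$ with $U(0,0)\neq0$, so the second Segre map $(z,t)\mapsto(z,Q(z,t,0))$ is generically of full rank and degenerates precisely along $\{z=0\}$. I would define $B(z)\in\mathbb C\{z\}$, depending continuously on $Q$ via Remark \ref{implfunct}, as the factor capturing this vanishing, chosen so that the substitution $t=w/B(z)$ inverts the Segre parametrization away from $\{z=0\}$. Then $\Psi^{h,j_1,j_2}_\ell(z,w/B(z))$ is a priori only a Laurent-type series in $z$, and the expression (\ref{formalpsi}) extends holomorphically across $\{z=0\}$ exactly when the would-be singular (negative-power) part cancels. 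Collecting the coefficients of those terms, which depend linearly on $\Lambda_0$ and continuously on $(H,Q)$, gives the linear conditions $\sum d^{h,j_1,j_2}_{n,\ell}\Lambda^{j_1,j_2}_{0,h}=0$; when these hold, (\ref{formalpsi}) defines the holomorphic germ $K_\ell(z,w,\Lambda_0)=\sum K^{h,j_1,j_2}_\ell(z,w)\Lambda^{j_1,j_2}_{0,h}$, linear in $\Lambda_0$ and continuous in $(H,Q)$.

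The germ $K_\ell(\cdot,\Lambda_0)$ is only a candidate, and two further constraints are needed. First, its own $4$-jet at $0$ must reproduce the prescribed $\Lambda_0$; comparing the two $4$-jets, both linear in $\Lambda_0$, yields the finite system $\sum e^{h,j_1,j_2}_{n,\ell}\Lambda^{j_1,j_2}_{0,h}=0$ of condition (ii). Second, a holomorphic germ with the correct jet need not satisfy the original equation, since reflection only produces a necessary identity; I would therefore substitute $(K_1,K_2,K_3)$ into (\ref{lineq}) and expand the result as a power series in $(z,\chi,\tau)$ restricted to $w=Q(z,\chi,\tau)$. Because the $K_\ell$ are linear in $\Lambda_0$ while (\ref{lineq}) also involves the conjugates $\overline\alpha_j=\overline K_j$, the resulting coefficients are linear in $(\Lambda_0,\overline\Lambda_0)$, and setting them all to zero gives condition (iii), namely $\sum(f^{h,j_1,j_2}_{n,\ell}\Lambda^{j_1,j_2}_{0,h}+g^{h,j_1,j_2}_{n,\ell}\overline\Lambda^{j_1,j_2}_{0,h})=0$. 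All coefficient families $d_n,e_n,f_n,g_n$ inherit continuity in $(H,Q)$ from the continuity of $\Psi$, $K$ and $B$.

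Once (i)--(iii) hold, $(K_1,K_2,K_3)(\cdot,\Lambda_0)$ is by construction a solution of (\ref{lineq}) with $4$-jet $\Lambda_0$; conversely, any solution with $4$-jet $\Lambda_0$ restricts on $\mathcal S^2_0$ to $\Psi(\cdot)\Lambda_0$ by Lemma \ref{secseg}, hence coincides with $K(\cdot,\Lambda_0)$ and forces (i)--(iii), which also yields the asserted uniqueness. The hard part will be the passage underlying condition (i): making rigorous the move from the Segre-set identity to a genuine germ in $(z,w)$ through the division by $B(z)$, i.e.\ isolating the countable family of linear conditions that kill the singularity along the degeneration locus $\{z=0\}$ of the Segre map while preserving continuous dependence on $(H,Q)$. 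This is precisely where strong pseudoconvexity ($\mathbf t=2$, $Q_{z\chi}(0,0,0)\neq0$) is essential, since it pins down the branching of the Segre parametrization and hence the exact mechanism of the substitution $t=w/B(z)$.
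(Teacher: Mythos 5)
Your overall architecture coincides with the paper's: reflection plus Segre iteration via Lemma \ref{secseg} with $q=2$, followed by three families of linear conditions obtained by (i) killing the singular part of a Laurent-type expansion, (ii) matching the $4$-jet, and (iii) substituting back into \eqref{lineq}. However, the step you yourself flag as ``the hard part'' contains a genuine error, not merely an omission. You define $\Psi^{h,j_1,j_2}_\ell(z,t)$ to be the Segre-set coefficients $\phi^{h,j_1,j_2}_\ell(z,\chi)$ evaluated at $\chi=t$, and then ask for $B(z)$ such that the substitution $t=w/B(z)$ inverts the Segre parametrization $w=Q(z,t,0)$. No such $B$ exists unless $Q(z,\chi,0)$ is linear in $\chi$: writing $Q(z,\chi,0)=\sum_{j\ge1}A_j(z)\chi^j$, the requirement $Q\left(z,w/B(z),0\right)\equiv w$ forces $A_1=B$ and $A_j\equiv0$ for all $j\ge2$. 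For a general strongly pseudoconvex $M$ (as opposed to a hyperquadric, where $Q(z,\chi,0)=2iz\chi$) the inverse $\chi(z,w)$ of the Segre map is not of the form $w/B(z)$, so with your definitions the functions in \eqref{formalpsi} do not reproduce $\alpha_\ell(z,w)$, and the entire equivalence (i)--(iii) collapses.

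The fix is precisely the device of \cite[Proposition 2.11]{BER}, which the paper's proof follows: observe that $C_j=A_jA_1^{j-2}$ is holomorphic for $j\ge2$, invert $t=\hat\psi(z,u)=u+\sum_{j\ge2}C_j(z)u^j$ by the implicit function theorem to obtain a holomorphic $\psi(z,t)$ (this is where Remark \ref{implfunct} yields continuity in $Q$), and verify that the Segre inverse is $\chi(z,w)=A_1(z)\,\psi\bigl(z,w/A_1(z)^2\bigr)$. In other words, the inverse is a \emph{holomorphic function of the pair} $\bigl(z,w/B(z)\bigr)$ with $B=A_1^2$, rather than being $w/B(z)$ itself, and this reparametrization must be absorbed into the definition of $\Psi$ by setting $\Psi^{h,j_1,j_2}_\ell(z,t)=\phi^{h,j_1,j_2}_\ell\bigl(z,A_1(z)\psi(z,t)\bigr)$. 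Note in particular that $B=A_1^2\sim z^2$, so the expansion of $\Psi_\ell\left(z,w/B(z)\right)$ produces the exponents $z^{m_1-2m_2}$, and the splitting into $m_1\ge 2m_2$ (giving $K^{h,j_1,j_2}_\ell$) versus $m_1<2m_2$ (giving the coefficients $d_n$) is exactly what makes condition (i) work. Once $\Psi$ and $B$ are defined this way, your treatment of (ii), (iii), the continuity statements, and the final existence-and-uniqueness argument go through as in the paper.
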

\begin{proof} As mentioned before, we are going to follow the proof of some propositions in \cite{BER}: we start with Proposition 2.11. Write $Q(z,\chi,0) = \sum_{j\geq 1} A_j(z)\chi^j$, define $C_j(z) = A_j(z) A_1(z)^{j-2}$ for $j\geq 2$, and let $\hat \psi(z,u) := u + \sum_{j\geq 2} C_j(z)u^j$. It is clear that each $A_j$ and $\hat \psi$ depend continuously on $Q$. It follows that the series $\psi(z,t) = t + \sum v_j(z) t^j$, obtained by solving for $u$ the equation $t = \hat\psi(z,u)$ by the implicit function theorem, also depends continuously on $Q$ (see Remark \ref{implfunct}). By construction, $w \equiv Q(z, A_1(z)\psi(z,w/A_1(z)^2),0 )$.

Put now
\[\phi_\ell^{h,j_1,j_2}(z,\chi) = \frac{1}{S}  q_\ell^{h,j_1,j_2}(\partial^4 r^k, \partial^4 \overline r^k, \partial^4 \overline Q,  \partial^4 Q)  \]
where $S=s{\overline s}^3(0), q_\ell^{h,j_1,j_2}$ are given by Lemma \ref{secseg}. Because of the polynomial form of  $s,\overline s, q_\ell^{h,j_1,j_2}$ and the analyticity of $r^k$, we have that the map $\Omega_1\times \Omega_2 \ni (H,Q) \to \phi_\ell^{h,j_1,j_2} \in \mathbb C\{z,\chi\}$ is continuous. It follows that defining
\[\Psi^{h,j_1,j_2}_\ell(z,t) = \phi_\ell^{h,j_1,j_2}(z, A_1(z)\psi(z,t) )\]
and $B(z) = A_1(z)^2$, the first statement in (i) is verified. 

To verify the other points, we now look at the proof of Proposition 3.1 in \cite{BER}. Write
\[B(z)^j = z^{2j}U^j(z) \ {\rm with}\ U(0)\neq 0,\]
\[\Psi_\ell^{h,j_1,j_2}(z,t) = \sum_{m_1,m_2\geq 0}\psi_{\ell,m_1,m_2}^{h,j_1,j_2}z^{m_1}t^{m_2}, \]
\[\Psi_\ell^{h,j_1,j_2}\left (z,\frac{t}{B(z)} \right ) = \sum_{m_1,m_2\geq 0}\psi_{\ell,m_1,m_2}^{h,j_1,j_2}\frac{z^{m_1-2m_2}}{U^{m_2}(z)}t^{m_2} = \sum_{m_1,m_2\geq 0}\Psi_{\ell,m_1,m_2}^{h,j_1,j_2}z^{m_1 - 2m_2}t^{m_2}; \]
once again, $U$, $\psi_{\ell,m_1,m_2}^{h,j_1,j_2}$ and $\Psi_{\ell,m_1,m_2}^{h,j_1,j_2}$ depend on $(H,Q)\in \Omega_1\times \Omega_2$ in a (uniformly) continuous way. Note that we have used the fact that $A_1(z) = Q_\chi(z,0,0) = az + O(z^2)$ for some $a\neq 0$, which holds because $Q\in \Omega_2$.

With these definitions, the second and third statements of (i) follow by putting
\[K^{h,j_1,j_2}_\ell(z,w) =  \sum_{m_1 \geq 2m_2}\Psi_{\ell,m_1,m_2}^{h,j_1,j_2}z^{m_1 - 2m_2}w^{m_2} ,  \]
\[d^{h,j_1,j_2}_{n,\ell} =  \Psi_{\ell,\iota_1(n),\iota_2(n)}^{h,j_1,j_2} \]
where $\mathbb N \ni n \to (\iota_1(n),\iota_2(n))\in  \{(m_1,m_2)\in \mathbb N^2: m_1 < 2m_2\}$ is any bijection, since
\[\sum_{\substack{0\leq j_1 + j_2\leq 4\\ 1\leq h \leq 3}} \Psi_\ell^{h,j_1,j_2}\left (z,\frac{w}{B(z)} \right ) \Lambda_{0,h}^{j_1,j_2} = \]
\[ =  \sum_{\substack{0\leq j_1 + j_2\leq 4\\ 1\leq h \leq 3\\ m_1\geq 2m_2}} \Psi_{\ell,m_1,m_2}^{h,j_1,j_2} z^{m_1 - 2m_2}w^{m_2} \Lambda_{0,h}^{j_1,j_2}  +  \sum_{\substack{0\leq j_1 + j_2\leq 4\\ 1\leq h \leq 3\\ m_1 < 2m_2}} \Psi_{\ell,m_1,m_2}^{h,j_1,j_2} z^{m_1 - 2m_2}w^{m_2} \Lambda_{0,h}^{j_1,j_2} = \]
\[= \sum_{\substack{0\leq j_1 + j_2\leq 4\\ 1\leq h \leq 3}}  K^{h,j_1,j_2}_\ell(z,w)  \Lambda_{0,h}^{j_1,j_2} + \sum_{m_2\geq 0}w^{m_2} \sum_{m_1<2m_2}\frac{1}{z^{2m_2-m_1}}\sum_{\substack{0\leq j_1 + j_2\leq 4\\ 1\leq h \leq 3}}  \Psi_{\ell,m_1,m_2}^{h,j_1,j_2}  \Lambda_{0,h}^{j_1,j_2}   \]
and the last line extends holomorphically around $(0,0)$ if and only if the second summand vanishes identically.

The points (ii) and (iii) follow in a very similar way by, respectively, comparing the $4$-th jet of the map  $(z,w) \to (K_1(z,w,\Lambda_0), K_2(z,w,\Lambda_0), K_3(z,w,\Lambda_0) )$ with $\Lambda_0$ and by inserting the triple $(K_1,K_2,K_2)$ in the left-hand side of (\ref{lineq}) and then expanding it as a power series in $(z,\chi,\tau)$ (cf.\ again \cite[Proposition 3.1]{BER}).
\end{proof}
Despite the rather technical nature of the statement of Proposition \ref{linjetparam}, we can deduce the following interesting consequence:
\begin{cor}\label{semicont}
Fix $R\subset (\mathbb C\{z,w,\chi,\tau\})^3$. For any $(H,Q)\in \Omega_1\times \Omega_2$, the dimension $d(H,Q)$ of the space of the solutions of (\ref{lineq}) is finite. Moreover, the function $d:\Omega_1\times \Omega_2\to \mathbb N$ is upper semicontinuous, i.e.\ for any $p=(H,Q)\in \Omega_1\times \Omega_2$ there exists a neighborhood $\mathcal V$ of $p$ in $\Omega_1\times \Omega_2$ such that for any $p'=(H',Q')\in \mathcal V$ we have $d(p')\leq d(p)$.
\end{cor}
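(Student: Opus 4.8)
The plan is to read the solution space of \eqref{lineq} directly off the jet parametrization of Proposition \ref{linjetparam}. Fix $p=(H,Q)\in \Omega_1\times \Omega_2$. By that proposition, together with the jet determination supplied by Lemma \ref{secseg} (here $M$ is strongly pseudoconvex, so $\mathbf t=2$ and every solution is determined by its $4$-jet at $0$), the assignment $\alpha \mapsto \Lambda_0 = j_0^4\alpha$ is a real-linear bijection between the space of solutions of \eqref{lineq} and the set $W(p)$ of jets $\Lambda_0\in J_0^4$ satisfying conditions (i), (ii) and (iii), with inverse $\Lambda_0\mapsto (K_1(\cdot,\Lambda_0),K_2(\cdot,\Lambda_0),K_3(\cdot,\Lambda_0))$. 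Each of (i), (ii), (iii) is a family of equations that is $\mathbb R$-linear in $(\Lambda_0,\overline\Lambda_0)$, so $W(p)$ is an $\mathbb R$-linear subspace of $J_0^4$; since $J_0^4$ is a finite-dimensional real vector space, this already gives $d(p)=\dim_{\mathbb R} W(p)<\infty$, which is the first assertion.

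For the semicontinuity statement I would assemble the three families of equations into a single $\mathbb R$-linear map $\mathcal L(p)\colon J_0^4 \to \mathbb R^{\mathbb N}$ with $\ker \mathcal L(p)=W(p)$. Concretely, enumerate the (complex-valued) equations coming from the coefficients $d_n,e_n$ and $(f_n,g_n)$, split each into its real and imaginary parts, and record the resulting real coefficients as the rows of a matrix $M(p)$ whose columns are indexed by a fixed real basis of $J_0^4$. The essential input from Proposition \ref{linjetparam} is that the maps $p\mapsto d_n(p),e_n(p),f_n(p),g_n(p)$ are continuous on $\Omega_1\times \Omega_2$, so every entry of $M(p)$ is a continuous function of $p$.

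The key point is then the lower semicontinuity of the rank. Write $D=\dim_{\mathbb R} J_0^4$ and $r=\rank M(p)$. Since $D<\infty$, there are $r$ rows and $r$ columns of $M(p)$ whose $r\times r$ minor is nonzero; this minor is a polynomial in finitely many entries of $M(p)$, hence a continuous function of $p$, and therefore remains nonzero on a neighborhood $\mathcal V$ of $p$. Thus $\rank M(p')\geq r$ for all $p'\in\mathcal V$, and by rank--nullity for a linear map out of the finite-dimensional space $J_0^4$,
\[ d(p')=\dim_{\mathbb R}\ker \mathcal L(p') = D-\rank M(p') \leq D-r = d(p), \]
which is exactly the claimed upper semicontinuity.

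I do not expect a genuine obstacle here: all the analytic content — the finite jet determination of the solutions of \eqref{lineq} and the continuous dependence of the parametrizing data on $(H,Q)$ — has already been established in Proposition \ref{linjetparam}, and what remains is the elementary fact that the kernel dimension of a finite-rank linear map with continuously varying entries is upper semicontinuous. The only point requiring mild care is bookkeeping: condition (iii) involves $\overline\Lambda_0$ and is only $\mathbb R$-linear, so the rank and nullity computations must be carried out over $\mathbb R$ rather than $\mathbb C$; once $J_0^4$ is regarded as a real vector space this is automatic, and the argument above applies verbatim.
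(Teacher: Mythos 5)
Your proposal is correct and takes essentially the same route as the paper: both use Proposition \ref{linjetparam} to identify the solution space of (\ref{lineq}) with the real-linear subspace of $J_0^4$ cut out by the conditions (i)--(iii), whose coefficients $d_n, e_n, f_n, g_n$ depend continuously on $(H,Q)$, which gives finiteness, and then deduce upper semicontinuity of the kernel dimension from lower semicontinuity of the rank of the defining system. The paper phrases this last step as openness of linear independence of finitely many coefficient vectors in $J_0^4\times J_0^4$ (working with the complexification) rather than via nonvanishing real minors, but this is the same linear-algebra fact.
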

\begin{proof}
For $p=(H,Q)\in \Omega_1\times \Omega_2$, let $\mathcal L(p)$ be the space of the solutions of (\ref{lineq}), and let $L(p)\subset J_0^4$ be the (real) linear subspace defined by
\[\sum_{\substack{0\leq j_1 + j_2\leq 4\\1\leq h \leq 3 }} d_{n,\ell}^{h,j_1,j_2} \Lambda_{h}^{j_1,j_2}= \sum_{\substack{0\leq j_1 + j_2\leq 4\\1\leq h \leq 3 }} e_{n,\ell}^{h,j_1,j_2} \Lambda_{h}^{j_1,j_2}= \] 
\[ = \sum_{\substack{0\leq j_1 + j_2\leq 4 \\ 1\leq h \leq 3}} (f_{n,\ell}^{h,j_1,j_2} \Lambda_{h}^{j_1,j_2} + g_{n,\ell}^{h,j_1,j_2} \overline \Lambda_{h}^{j_1,j_2})=0 \ \ {\rm for\ all}\ n\in \mathbb N, \ 1\leq \ell \leq 3; \]
by Theorem \ref{linjetparam}, the projection $j_0^4: (\mathbb C\{z,w\})^3 \to J_0^4$ induces an isomorphism between $\mathcal L(p)$ and $L(p)$, thus $\dim_{\mathbb R} \mathcal L(p)<\infty$. For the second statement, identify $(d_{n,\ell}^{h,j_1,j_2} , 0)$, $(0,\overline d_{n,\ell}^{h,j_1,j_2}) $, $(e_{n,\ell}^{h,j_1,j_2} , 0)$, $(0,\overline e_{n,\ell}^{h,j_1,j_2}) $, $(f_{n,\ell}^{h,j_1,j_2}, g_{n,\ell}^{h,j_1,j_2} ) $ with vectors in $J_0^4 \times J_0^4$; the space they span is orthogonal to (the complexification of) $L(p)$ and can be generated by finitely many independent ones, say $v_1(p),\ldots, v_k(p)$. Since each $v_j(p)$ depends continuously on $p$, choosing a small neighborhood $\mathcal V$ of $p$ the vectors $v_1(p'),\ldots, v_k(p')$ are independent also for $p'\in \mathcal V$, hence $\dim_{\mathbb R}L(p')\leq \dim_{\mathbb R}L(p)$.
\end{proof}

\section{Examples}\label{sec:examples}

\begin{example}\label{modelexample}
We consider $M_0=\{{\rm Im}w - |z|^2 -\varepsilon |z|^4 = 0\}$ and show that $H_0: (z,w)\mapsto (z,z^2,w)$ is a locally rigid embedding from $M_0$ to $\mathbb H^3_\varepsilon$. Here we restrict ourselves to study the sphere case where $\varepsilon = +1$, since the case of $\mathbb H^3_-$ as target works in the same way. In the case of positive signature the equation \eqref{lineq} becomes
\begin{equation}\label{lineqz2}
 \alpha_{0,3}(z,w) - \overline \alpha_{0,3}(\chi,\tau) - 2 i (\alpha_{0,1}(z,w)\chi + \overline \alpha_{0,1}(\chi,\tau) z  +  \alpha_{0,2}(z,w) \chi^2 + \overline \alpha_{0,2}(\chi,\tau) z^2) = 0,
\end{equation}
if $w=\tau + 2 i (z \chi + z^2 \chi^2)$, where we solve for $\alpha_0(z,w)=(\alpha_{0,1}(z,w),\alpha_{0,2}(z,w),\alpha_{0,3}(z,w))\in  (\mathbb C\{z,w\})^3$ with $\alpha_{0,j}(0,0)=0$. For the jet parametrization as described in the proof of Proposition \ref{linjetparam} we have with the notation borrowed from there, $A_1(z) = 2 i z$ and $B(z)= A_1(z)^2$, such that 
\begin{align*}
A_1(z) \psi(z,w/B(z)) = - (1- \sqrt{1 - 2 i w})/(2 z),
\end{align*} 
where we choose the principal branch of the square root. For $\Psi_{0,\ell}$ from \eqref{formalpsi} we set $\Psi_0(z,w) = (\Psi_{0,1}(z,w),\Psi_{0,2}(z,w),\Psi_{0,3}(z,w)) \in  (\mathbb C[[z,w]])^3$ and write $\Psi_{0,\ell}^{m,n}$ for the coefficient of $z^m w^n$ of $\Psi_{0,\ell}$. We denote $\Lambda_{0,j}^{m,n} = \frac{\partial^{m+n}\alpha_{0,j}}{\partial z^m \partial w^n}(0,0)$ and we scale some $\Lambda_{0,j}^{m,n}$ in order to avoid large numerical factors. The equations from (i) of Proposition \ref{linjetparam} are given by the vanishing of the following coefficients of $\Psi_0$, which we denote by $S_1(\Psi_0)$:
\begin{align*}
\Psi_{0,1}^{-1,4} & =   3 \Lambda_{0,2}^{0,2} + 2 i \Lambda_{0,2}^{0,3},\\
\Psi_{0,3}^{-1,4} & = 12 \Lambda_{0,1}^{0,2} - 2 \Lambda_{0,1}^{0,3} + 3 \Lambda_{0,2}^{1,1} + 3 i \Lambda_{0,2}^{1,2},\\
\Psi_{0,1}^{-1,5} & =  12 \Lambda_{0,2}^{0,2} + 7 i \Lambda_{0,2}^{0,3} - \Lambda_{0,2}^{0,4},\\
\Psi_{0,3}^{-1,5} & = 18 \Lambda_{0,1}^{0,2} + 4 i \Lambda_{0,1}^{0,3} - \Lambda_{0,1}^{0,4} - 6 i \Lambda_{0,2}^{1,1} + 3 \Lambda_{0,2}^{1,2} + i \Lambda_{0,2}^{1,3},\\
\Psi_{0,1}^{-1,6} & = 75 \Lambda_{0,2}^{0,2} + 24 i \Lambda_{0,2}^{0,3} - 2 \Lambda_{0,2}^{0,4},\\
\Psi_{0,3}^{-1,6} & = 54 \Lambda_{0,1}^{0,2} + 6 i \Lambda_{0,1}^{0,3} - \Lambda_1^{0,4} - 21 i \Lambda_{0,2}^{1,1} + 4 \Lambda_{0,2}^{1,2} + i \Lambda_{0,2}^{1,3},\\
\Psi_{0,3}^{-1,7} & = 42 \Lambda_{0,1}^{0,2} + i \Lambda_{0,1}^{0,3} - 18 i \Lambda_{0,2}^{1,1}.
\end{align*}
The equations given in (ii) of Proposition \ref{linjetparam} are equivalent to $\Psi_0$ satisfying
the following equations, which  we abbreviate by $S_2(\Psi_0) =0$:
\begin{align*}
\Lambda_{0,2}^{2,1} -2 \Psi_{0,2}^{2,1} & =  6 \Lambda_{0,1}^{1,1} + 6 i \Lambda_{0,1}^{1,2} + 3 \Lambda_{0,2}^{2,1} - 6 \Lambda_{0,3}^{0,2} - i \Lambda_{0,3}^{0,3},\\
 \Lambda_{0,1}^{2,2} - 4 \Psi_{0,1}^{2,2} & = 3 i \Lambda_{0,1}^{2,1} + 3 \Lambda_{0,1}^{2,2} - 2 \Lambda_{0,3}^{1,3},\\
\Lambda_{0,2}^{2,2} - 4 \Psi_{0,2}^{2,2} & = 6 \Lambda_{0,1}^{1,1} - 6 i \Lambda_{0,1}^{1,2} - 2 \Lambda_{0,1}^{1,3} - 3 \Lambda_{0,2}^{2,1} - i \Lambda_{0,3}^{0,3} + \Lambda_{0,3}^{0,4},\\
\Lambda_{0,1}^{1,3} - 6 \Psi_{0,1}^{1,3}  & = 18 \Lambda_{0,1}^{1,1} - 8 \Lambda_{0,1}^{1,3} - 3 \Lambda_{0,2}^{2,1} + 3 i \Lambda_{0,2}^{2,2} - 6 \Lambda_{0,3}^{0,2} - 6 i \Lambda_{0,3}^{0,3} + 3 \Lambda_{0,3}^{0,4},\\
 \Lambda_{0,2}^{1,3} - 6 \Psi_{0,2}^{1,3} & = 42 i \Lambda_{0,1}^{0,2} - 6 \Lambda_{0,1}^{0,3} -3 i \Lambda_{0,1}^{0,4} + 18 \Lambda_{0,2}^{1,1} - \Lambda_{0,2}^{1,3}.
\end{align*}
We add the following equations to $S_2(\Psi_0)$. In fact they are not needed for this case, since the elements $\Lambda_{0,m}^{k,\ell}$ of $J_0^4$ appearing on the left-hand side do not occur in $\Psi_0$, but we use these equations in section \ref{sec:genericity} below. In addition to the 
equations above, $S_2(\Psi_0) = 0$ now also means that 
$\Psi_0 $ satisfies the following equations:
\begin{align*}
\Lambda_{0,3}^{1,0}  - \Psi_{0,3}^{1,0}& = \Lambda_{0,3}^{1,0}, \\
\Lambda_{0,3}^{2,0}  - 2 \Psi_{0,3}^{2,0}& = \Lambda_{0,3}^{2,0}, \\
\Lambda_{0,1}^{3,0}  - 6 \Psi_{0,1}^{3,0}& =  \Lambda_{0,1}^{3,0} - 3 \Lambda_{0,3}^{2,1}, \\
\Lambda_{0,2}^{3,0}  - 6 \Psi_{0,2}^{3,0}& =  6 i \Lambda_{0,1}^{2,1} + \Lambda_{0,2}^{3,0} - 6 \Lambda_{0,3}^{1,1} - 6 i \Lambda_{0,3}^{1,2}, \\
\Lambda_{0,3}^{3,0}  - 6 \Psi_{0,3}^{3,0}& = \Lambda_{0,3}^{3,0}, \\
\Lambda_{0,1}^{4,0}  - 24 \Psi_{0,1}^{4,0}& = \Lambda_{0,1}^{4,0}, \\
\Lambda_{0,2}^{4,0}  - 24 \Psi_{0,2}^{4,0}& = \Lambda_{0,2}^{4,0} - 12 \Lambda_{0,3}^{2,1} + 12 i \Lambda_{0,3}^{2,2}, \\
\Lambda_{0,3}^{4,0}  - 24 \Psi_{0,3}^{4,0}& = \Lambda_{0,3}^{4,0}, \\
\Lambda_{0,1}^{3,1}  - 6 \Psi_{0,1}^{3,1}& = \Lambda_{0,1}^{3,1} - 3 \Lambda_{0,3}^{2,2}, \\
\Lambda_{0,2}^{3,1}  - 6 \Psi_{0,2}^{3,1}& = 9 \Lambda_{0,1}^{2,1} - 3 i \Lambda_{0,1}^{2,2} + \Lambda_{0,2}^{3,1} - 6 \Lambda_{0,3}^{1,2} + 3 i\Lambda_{0,3}^{1,3}, \\
\Lambda_{0,3}^{3,1}  - 6 \Psi_{0,3}^{3,1}& = \Lambda_{0,3}^{3,1}.
\end{align*}
We write $\rho_0(z,\chi,\tau)$ for the left-hand side of \eqref{lineqz2} and set $\rho_{0,k,\ell,m} = \frac{\partial^{k+\ell+m} \rho_0}{\partial z^k \partial \chi^{\ell} \partial \tau^m}(0)$. Then the equations from (iii) of Proposition \ref{linjetparam} are given by the vanishing of the left hand side of the following equations:
\begin{align*}
\rho_{0,0,1,1} & =2 \Lambda_{0,1}^{0,1} - i \overline{\Lambda}_{0,3}^{1,1},\\
\rho_{0,1,1,0} & = \Lambda_{0,3}^{0,1} - \Lambda_{0,1}^{1,0} -  \overline{\Lambda}_{0,1}^{1,0},\\
\rho_{0,1,2,0} & = 4 i \Lambda_{0,1}^{0,1} + 2 \Lambda_{0,2}^{1,0} + \overline{\Lambda}_{0,1}^{2,0},\\
\rho_{0,1,2,1} & = 4 i \Lambda_{0,1}^{0,2} + \Lambda_{0,2}^{1,1} + \overline{\Lambda}_{0,1}^{2,1},\\
\rho_{0,1,3,0} & = 4 i \Lambda_{0,2}^{0,1} + \overline{\Lambda}_{0,3}^{2,1},\\
\rho_{0,2,2,0} & = 4 i \Lambda_{0,1}^{1,1} + \Lambda_{0,2}^{2,0} + \overline{\Lambda}_{0,2}^{2,0} - 2 \Lambda_{0,3}^{0,1} - 2 i \Lambda_{0,3}^{0,2},\\
\rho_{0,2,2,1} & = 4 i \Lambda_{0,1}^{1,2} + \Lambda_{0,2}^{2,1} - 6 \overline{\Lambda}_{0,1}^{1,1} + 6 i \overline{\Lambda}_{0,1}^{1,2} - 2 \overline{\Lambda}_{0,2}^{2,1} + 6 \overline{\Lambda}_{0,3}^{0,2} - i \overline{\Lambda}_{0,3}^{0,3} - 2 \Lambda_{0,3}^{0,2} - i \Lambda_{0,3}^{0,3},\\
\rho_{0,2,3,0} & = 2 \Lambda_{0,1}^{0,1} + 2 i \Lambda_{0,1}^{0,2} + \Lambda_{0,2}^{1,1} + \overline{\Lambda}_{0,1}^{2,1} - i \overline{\Lambda}_{0,3}^{1,1} - \overline{\Lambda}_{0,3}^{1,2},\\
\rho_{0,2,3,1} & = 52 \Lambda_{0,1}^{0,2} + 10 i \Lambda_{0,1}^{0,3} - 12 i \Lambda_{0,2}^{1,1} + 14 \Lambda_{0,2}^{1,2} + 3 i \overline{\Lambda}_{0,1}^{2,1} - \overline{\Lambda}_{0,1}^{2,2} - 2 i \overline{\Lambda}_{0,3}^{1,2} + \overline{\Lambda}_{0,3}^{1,3},\\
\rho_{0,2,4,0} & = 4 \Lambda_{0,2}^{0,1} + i \Lambda_{0,2}^{0,2} - i \overline{\Lambda}_{0,3}^{2,1} + \overline{\Lambda}_{0,3}^{2,2},\\
\rho_{0,3,4,3} & = 1210 \Lambda_{0,1}^{0,2} - 108 i \Lambda_{0,1}^{1,2} + 27 \Lambda_{0,1}^{0,4} - 580 i \Lambda_{0,2}^{1,1} - 119 \Lambda_{0,2}^{1,2} - 27 i \Lambda_{0,2}^{1,3}.
\end{align*}
The collection of the above derivatives of $\rho_0$ we denote by $S_{3.1}(\rho_0)$ and the following derivatives of $\rho_0$ we denote by $S_{3.2}(\rho_0)$ and define $S_3(\rho_0)=\{S_{3.1}(\rho_0),S_{3.2}(\rho_0)\}$:
\begin{align*}
\rho_{0,1,1,1} & = \Lambda_{0,1}^{1,1} + \overline{\Lambda}_{0,1}^{1,1} - \Lambda_{0,3}^{0,2},\\
\rho_{0,2,2,2} & = 2 i \Lambda_{0,1}^{1,3}  - 2 i \overline{\Lambda}_{0,1}^{1,3} + \Lambda_{0,2}^{2,2} + \overline{\Lambda}_{0,2}^{2,2} - \overline{\Lambda}_{0,3}^{0,3}  - \Lambda_{0,3}^{0,3} + i \overline{\Lambda}_{0,3}^{0,4} - i \Lambda_{0,3}^{0,4} + 6 i \overline{\Lambda}_{0,1}^{1,1} - 6 \overline{\Lambda}_{0,1}^{1,2} - 3 i \overline{\Lambda}_{0,2}^{2,1} ,\\
\rho_{0,3,3,3} & = 10 \Lambda_{0,1}^{1,1} - 2 i \Lambda_{0,1}^{1,2} - \Lambda_{0,2}^{2,1} - 4 \Lambda_{0,3}^{0,2},\\
\rho_{0,3,3,4} & = 42 i \Lambda_{0,1}^{1,1} + 26 \Lambda_{0,1}^{1,2} + 2 i \Lambda_{0,1}^{1,3} - 9 i \Lambda_{0,2}^{2,1} + \Lambda_{0,2}^{2,2} - 12 i \Lambda_{0,3}^{0,2} - 5 \Lambda_{0,3}^{0,3} - i \Lambda_{0,3}^{0,4},\\
\rho_{0,4,4,3} & = 60 i \Lambda_{0,1}^{1,1} + 28 \Lambda_{0,1}^{1,2} + 2 i \Lambda_{0,1}^{1,3} - 10 i \Lambda_{0,2}^{2,1} +  \Lambda_{0,2}^{2,2} - 20 i \Lambda_{0,3}^{0,2} - 5 \Lambda_{0,3}^{0,3} - i \Lambda_{0,3}^{0,4},\\
\rho_{0,5,5,3} & = 42 i \Lambda_{0,1}^{1,1} + 28 \Lambda_{0,1}^{1,2} + 2 i \Lambda_{0,1}^{1,3} - 9 i \Lambda_{0,2}^{2,1} + \Lambda_{0,2}^{2,2} - 12 i \Lambda_{0,3}^{0,2} - 6 \Lambda_{0,3}^{0,3} - i \Lambda_{0,3}^{0,4}.
\end{align*}
Let  us denote by $\lambda_0 \in \mathbb C^{74}$ the vector consisting of the following elements of the $4$-jet of $\alpha_0$ at $0$, given by
\begin{align*}
\Lambda_{0,3}^{1,0}, \Lambda_{0,3}^{0,1}, \Lambda_{0,3}^{2,0}, \Lambda_{0,1}^{2,0}, \Lambda_{0,2}^{1,1}, \Lambda_{0,3}^{1,1}, \Lambda_0^{0,2}, \Lambda_0^{3,0}, \Lambda_0^{2,1}, \Lambda_0^{1,2}, \Lambda_0^{0,3}, \Lambda_0^{4,0}, \Lambda_0^{2,2},\Lambda_0^{3,1}, \Lambda_0^{1,3}, \Lambda_0^{0,4},
\end{align*}
and its conjugates and the derivatives $\overline{\Lambda}_{0,1}^{1,1}$ and $\overline{\Lambda}_{0,2}^{2,0}$. Above we use the notation $\Lambda_0^{m,n}=(\Lambda_{0,1}^{m,n},\Lambda_{0,2}^{m,n},\Lambda_{0,3}^{m,n})$. Then we consider the following collection of linear expressions for $\lambda_0$, which consists of $74$ components
\begin{align*}
S(\Psi_0,\rho_0)=\{S_1(\Psi_0),\overline{S}_1(\Psi_0),S_2(\Psi_0),\overline{S}_2(\Psi_0),S_{3.1}(\rho_0),\overline{S}_{3.1}(\rho_0), S_{3.2}(\rho_0)\},
\end{align*}
where $\overline{S}_k(\Phi)$ means that we conjugate all equations in $S_k(\Phi)$. Then we compute that the Jacobian matrix of $S$ with respect to $\lambda_0$ is of full rank. The remaining derivatives of $\alpha_0$ at $0$ which belong to $J_0^4$ and occur in $S(\Psi_0,\rho_0)$ are the following variables
\begin{align*}
\mu_0=\left(\Lambda_{0,1}^{1,0}, \overline{\Lambda}_{0,1}^{1,0}, \Lambda_{0,1}^{0,1}, \overline{\Lambda}_{0,1}^{0,1}, \Lambda_{0,2}^{1,0},\overline{\Lambda}_{0,2}^{1,0}, \Lambda_{0,2}^{0,1}, \overline{\Lambda}_{0,2}^{0,1}, \Lambda_{0,1}^{1,1}, \Lambda_{0,2}^{2,0}\right)\in \mathbb C^{10}.
\end{align*}
Thus we obtain $\dim_{\mathbb R} \mathfrak {hol}_0 (\Phi(\Lambda_0)) = 10$ and by Theorem \ref{suffcon2} we obtain local rigidity of $H_0$.
\end{example}

\begin{example}\label{itworks}
We define
\begin{align*}
M_1 &=\{(z,w)\in \mathbb C^2: {\rm Im} w  = |z|^2 + 3 |z|^4 + 2 {\rm Re}(z^3 \overline z^4) + 2 {\rm Re}(z^6 \overline z)\},\\
M_1' & =\{(z_1',z_2',w') \in \mathbb C^3: {\rm Im} w' = |z_1'|^2 + |z_2'|^2  + 2 {\rm Re}({z'_1}^2 \overline z'_2) + 2 {\rm Re}(z'_1 {{\overline z}'_2}^3) + 2 {\rm Re}({z_1'}^3 {{\overline z}'_2}^2)\},
\end{align*}
such that $H_1: (z,w) \mapsto (z,z^2,w)$ is a transversal and $2$-nondegenerate embedding from $M_1$ into $M'_1$. One can show that both $M_1$ and $M'_1$ do not possess any infinitesimal automorphisms which fix $0$. A similar computation as in Example \ref{modelexample} shows that the equation \eqref{lineq} in this case given by
\begin{align*}
 {\rm Re}(i \gamma_1(z,w) + (\overline z + 2 z \overline z^2 + 3 z^2 \overline z^4 + \overline z^6) \alpha_1(z,w) + (2 \overline z^2 + 2 z^2 \overline z^3 + 3 z^4 \overline z) \beta_1(z,w)) = 0,
\end{align*}
for $(z,w)\in M_1$, does not admit any nontrivial solution $(\alpha_1(z,w),\beta_1(z,w),\gamma_1(z,w)) \in (\mathbb C\{z,w\})^3$ fixing $0$.  According to Theorem \ref{suffcon1} the embedding $H_1: M_1 \rightarrow M_1'$ is locally rigid.
\end{example}

\begin{example}\label{itdoesntwork}
For the hypersurfaces $M_2$ and $M'_2$ given by
\begin{align*}
M_2 &=\{(z,w)\in \mathbb C^2: {\rm Im} w  = |z|^2 + 3 |z|^4 + 2 {\rm Re}(z^6 \overline z)\},\\
M_2' & =\{(z_1',z_2',w') \in \mathbb C^3: {\rm Im} w' = |z_1'|^2 + |z_2'|^2  + 2 {\rm Re}({z'_1}^2 \overline z'_2) + 2 {\rm Re}(z'_1 {{\overline z}'_2}^3) \},
\end{align*}
the spaces of infinitesimal automorphisms fixing $0$ are trivial. Moreover the map $H_2: (z,w) \mapsto (z,z^2,w)$ embeds $M_2$ into $M'_2$. The linear equation \eqref{lineq}, which in this case is given by
\begin{align*}
 {\rm Re}(i \gamma_2(z,w) + (\overline z + 2 z \overline z^2 + \overline z^6) \alpha_2(z,w) + (2 \overline z^2 + 3 z^4 \overline z) \beta_2(z,w)) = 0,
\end{align*}
for $(z,w) \in M_2$, has a nontrival solution given by $(\alpha_2(z,w),\beta_2(z,w),\gamma_2(z,w))=(i z, i z^2/3,0)$. One can show that the space of solutions is $1$-dimensional by following the procedure given in Proposition \ref{linjetparam}. So far we do not know whether the map $H_2: M_2 \rightarrow M_2'$ is locally rigid.
\end{example}

\begin{example}\label{nonsphericalfamily}
We define the following hypersurfaces
\begin{align*}
M_3 &=\{(z,w)\in \mathbb C^2: {\rm Im} w  = |z|^2 + |z|^4 \},\\
M_3' & =\{(z_1',z_2',w') \in \mathbb C^3: {\rm Im} w' = |z_1'|^2 + |z_2'|^2  + |z_1'|^4 + |z_1'|^2 |z_2'|^2 + {\rm Im} ({z_2'}^2 \bar z_1') + {\rm Im} (z_2' \bar z_1' \bar z_2')\}.
\end{align*}
The hypersurface $M_3$, which is $M_0$ from Example \ref{modelexample}, admits a real $1$-dimensional isotropy group fixing $0$ given by $(z,w) \mapsto (u z, w)$, where $|u|=1$, while for $M_3'$ the space of infinitesimal automorphisms fixing $0$ is trivial. The map $H_{3,t}: (z,w) \mapsto (z, t z, (1+t^2) w)$ for $t \in \mathbb R$ embeds $M_3$ into $M_3'$, is transversal and $2$-nondegenerate at $0$ for $t \neq 0$. This shows that the map $H_{3,t}: M_3 \rightarrow M_3'$ is not locally rigid. If we solve the linear equation \eqref{lineq} in this case we obtain that if $t\neq 0$ then $\dim_{\mathbb R} \aut_0( H_{3,t}) = 10$.
\end{example}

\section{Genericity of Local Rigidity of Embeddings into Hyperquadrics}\label{sec:genericity}
\begin{theorem}\label{genericity}
 There exist integers $K, N(K)$ and an algebraic subvariety $X \subset \mathbb C^{N(K)}$ such that if we let $H: \mathbb C^2 \to \mathbb C^3$ be a germ of a holomorphic map, which is transversal and $2$-nondegenerate at $0$, and satisfies $H(0)=0$ and $H(M) \subset \mathbb H^3_\varepsilon$, where $M$ is given as in \eqref{oftheform}, we have if $j_0^K F$ belongs to the complement of $X$ then $H$ is locally rigid.
\end{theorem}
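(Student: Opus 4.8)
The plan is to reduce local rigidity to the single numerical condition $\dim_{\mathbb R}\mathfrak{hol}_0(H) = 10$ and then to show that this condition fails only on a proper algebraic subset of jet space. Since the target is $\mathbb H^3_\varepsilon$, Remark \ref{trivsol} gives the universal lower bound $\dim_{\mathbb R}\mathfrak{hol}_0(H)\geq \dim_{\mathbb R}\mathfrak{hol}_0(\mathbb H^3_\varepsilon) = 10$, while Theorem \ref{suffcon2} (whose hypotheses hold here: $M$ is strictly pseudoconvex of the form \eqref{oftheform}, $M'=\mathbb H^3_\varepsilon$ is Levi-nondegenerate, and $H$ is transversal and $2$-nondegenerate) guarantees local rigidity as soon as equality holds. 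So it suffices to produce a $K$, an integer $N(K)$, and a subvariety $X\subset\mathbb C^{N(K)}$ (with $N(K)$ the complex dimension of the space of $K$-jets at $0$ of scalar germs $F$ with $F(0)=0$) such that $j_0^K F\notin X$ forces $\dim_{\mathbb R}\mathfrak{hol}_0(H)\leq 10$.

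By Corollary \ref{semicont} together with Proposition \ref{linjetparam}, the projection $j_0^4$ identifies $\mathfrak{hol}_0(H)$ with the finite-dimensional real subspace $L(p)\subset J_0^4$ cut out by the linear equations of parts (i)--(iii) of Proposition \ref{linjetparam}, whose coefficients are built out of the data $p=(H,Q)$. In the present situation this data is entirely determined by $F$: one has $H=(z,F(z,w),w)$, and $Q$ is obtained from the defining equation \eqref{oftheform} of $M$ by the implicit function theorem. The plan is to run exactly the computation of Example \ref{modelexample}, but now with a general $F$ in place of $F_0(z,w)=z^2$: one selects the same finite list $S(\Psi,\rho)$ of $74$ linear expressions, indexed over the same finite ranges, and the same splitting of the relevant entries of $J_0^4$ into a vector $\lambda\in\mathbb C^{74}$ and a vector $\mu\in\mathbb C^{10}$ (the latter corresponding to the trivial deformations coming from $\mathfrak{hol}_0(\mathbb H^3_\varepsilon)$). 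Writing $D = \det\bigl(\partial S/\partial\lambda\bigr)$, I first want to check that there is a finite $K$ for which $D$ is an \emph{algebraic} function of $(j_0^K F,\overline{j_0^K F})$: this is where one must look inside the proof of Proposition \ref{linjetparam} rather than merely quote it, since the universal polynomials $q_\ell^{h,j_1,j_2}$ are polynomial in $\partial^{\leq 4}(r,\overline r,Q,\overline Q)$, and the only non-polynomial steps --- the inversion $t=\hat\psi(z,u)$, the division by $B(z)$, and the factor $1/S$ --- produce algebraic functions with denominators that are nonvanishing near the model.

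The $74$ chosen equations are genuine necessary conditions on every element of $\mathfrak{hol}_0(H)$, so at any $F$ with $D\neq 0$ the block $\partial S/\partial\lambda$ is invertible and $\lambda$ is forced to be a linear function of $\mu$; hence $\dim_{\mathbb R} L(p)\leq 10$, and with the lower bound above one gets $\dim_{\mathbb R}\mathfrak{hol}_0(H)=10$, so Theorem \ref{suffcon2} applies and $H$ is locally rigid. It therefore only remains to see that $\{D=0\}$ is a proper subvariety. By Example \ref{modelexample} the Jacobian $\partial S/\partial\lambda$ has full rank at $F_0(z,w)=z^2$, i.e.\ $D\neq 0$ there, so $D\not\equiv 0$; clearing the locally nonvanishing denominators, the vanishing of $D$ is then contained in the zero locus of a nonzero polynomial in $(j_0^K F,\overline{j_0^K F})$, which defines the desired proper algebraic subvariety $X\subset\mathbb C^{N(K)}$. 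Outside $X$ we obtain local rigidity, as claimed.

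The main obstacle I anticipate is precisely the promotion of the continuous dependence furnished by Proposition \ref{linjetparam} to genuine algebraic dependence of the finite data entering $D$, together with the bookkeeping needed to fix a single $K$ that captures all derivatives of $F$ occurring in the high-order coefficients of $\Psi$ and $\rho$ used in Example \ref{modelexample} (e.g.\ terms such as $\Psi_{0,3}^{-1,7}$ or $\rho_{0,5,5,3}$). A secondary, but conceptually reassuring, point is that one never has to control the infinitely many remaining equations of Proposition \ref{linjetparam}: because the selected subsystem already yields the upper bound $\dim\leq 10$ and the trivial deformations already yield the lower bound $\dim\geq 10$, equality is automatic wherever $D\neq 0$.
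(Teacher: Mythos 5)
Your proposal follows essentially the same route as the paper's proof: reduce to the criterion of Theorem \ref{suffcon2} via the jet parametrization of Proposition \ref{linjetparam}, rerun the model computation of Example \ref{modelexample} with a general $F$ in place of $z^2$, and observe that the Jacobian determinant of the selected $74\times 74$ subsystem $S(\Psi,\rho)$ with respect to $\lambda$ depends (after clearing denominators that are nonvanishing under the hypotheses) polynomially on a finite jet of $F$ and its conjugate, so that its zero locus defines the algebraic variety $X$ outside of which Theorem \ref{suffcon2} yields local rigidity. Your write-up is in fact slightly more explicit than the paper on two points it leaves implicit, namely the properness of $X$ (nonvanishing of the determinant at the model $F=z^2$) and the passage from rational to polynomial dependence, but the argument is the same.
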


\begin{remark}
The estimate for $K$ we would obtain from the proof of Theorem \ref{genericity} is very rough. We have computed when $F(z,w) = F(z)$ in \eqref{oftheform} and $M' = \mathbb H^3$ then $K=8$. Even in this case we do not know whether $X$ is trivial or not.
\end{remark}

\begin{proof}
Let $M$ be given as in \eqref{oftheform} and the embedding of the form $H: (z,w) \mapsto (z,F(z,w),w)$ with $F_{z^2}(0)\neq0$, for which we consider the system of equations given in the model case $F(z,w) = z^2$ in Example \ref{modelexample}. First we keep track which order $K$ of $j_0^K F$ is involved in the equations given in the model case. In order to use the techniques of Proposition \ref{linjetparam} we first need to perform a change of coordinates such that $M = \{ {\rm Im} w - |z|^2 -\varepsilon |F(z,w)|^2 = 0\}$, where $F: \mathbb C^2 \rightarrow \mathbb C$ is holomorphic and $F(0)=0$, is given in normal coordinates, $M=\{w - Q(z,\chi,\tau)=0 \}$, where $Q(z,0,\tau) \equiv Q(0,\chi,\tau) \equiv \tau$. We want to see how $j_0^k Q$ depends on $j_0^{k'} F$, so we briefly look into the details of the aforementioned well-known change of coordinates. First we rewrite the original defining function for $M$ from \eqref{oftheform} as $\rho''(z,w,\chi,\tau) = w - \widetilde Q(z,\chi,\tau)$ after an application of the implicit function theorem. Note, since $F(0)=0$, $j_0^k\widetilde Q$ depends on $j_0^{k-1} F$. We seek for a biholomorphism $(z,w) \mapsto(z,w + i g(z,w))$, where $g=O(2)$ and $g(0,w)=\overline g(0,w)$. We write $\rho'(z,w,\chi,\tau)=\rho''(z,w+ i g(z,w),\chi, \tau - i \overline g(\chi,\tau))$ and require $\rho'(z,w,0,w) =0$. This holds if and only if $i g(z,w) + i g(0,w) - \widetilde Q(z,0,w - i g(0,w)) = 0$, hence $g_{z^m}(0,w) = - i \widetilde Q(0,0,w - i g(0,w))$ for $m \geq 1$ and if we set $z=0$ we solve in $2 i g(0,w) - \widetilde Q(0,0,w- i g(0,w)) = 0$ for $g(0,w)$ by the implicit function theorem. Thus $j_0^k g$ depends on $j_0^{k-1} F$. Finally we solve for $w$ in $\rho'(z,w,\chi,\tau)$ again by the implicit function theorem, to obtain normal coordinates for $M$, such that $M=\{w - Q(z,\chi,\tau)=0 \}$, and $Q$ has the required properties. In particular we obtain that $j_0^k Q$ depends on $j_0^{k-1} F$. 

We are now in the situation to apply Proposition \ref{linjetparam}. Inspecting the proof of Proposition \ref{linjetparam} (i) we obtain that in order to compute the coefficient of $ z^m w^k$ in $\Psi_{\ell}$ from \eqref{formalpsi} we need to consider the coefficient of $z^{2 k + m} t^k$ in expressions of the following form
\begin{align*}
\Psi^{h,j_1,j_2}_{\ell}(z,t) & = \phi_{\ell}^{h,j_1,j_2}(z, A_1(z) \psi(z,t)) \\
& = \frac{1}{s{\overline s}^3(0)}  q_\ell^{h,j_1,j_2}( \partial^4\overline r^k(H,\overline H), \partial^4  r^k(H,\overline H), \partial^4 Q,\partial^4 \bar Q),
\end{align*}
where the functions  $\partial^4\overline r^k(H,\overline H)$, 
 $ \partial^4  r^k(H,\overline H) $, 
  $ \partial^4 Q$, and $ \partial^4 \bar Q$ are either evaluated
   along $(z,Q(z,A_1(z) \psi(z,t),0),A_1(z) \psi(z,t),0)$ or $(A_1(z) \psi(z,t),0,0,0)$. 
   If we consider the linear system from the model case in this general situation given by $S(\Psi,\rho)$, which is linear in $\lambda_0$ from the model case, we notice that the highest order $K \in \mathbb N$ of derivatives of $\Psi^{h,j_1,j_2}_{\ell}(z,t)$ occurs in $\Psi_3^{-1,7}$ from $S_1(\Psi)$. Also note that for the expressions from $S_3(\rho)$ we need to take derivatives of order less than $K$. Thus $S(\Psi,\rho)$ depends on $j_0^{K-1} F$ and its conjugates and moreover this dependence is polynomial. Thus if we compute the Jacobian matrix of $S(\Psi,\rho)$ with respect to $\lambda_0$, the resulting determinant $d(j_0^{K-1} F,j_0^{K-1} \overline F)$ is a polynomial. Hence there is an integer $N(K)$ such that $X=\{d=0\} \subset \mathbb C^{N(K)}$ is an algebraic subvariety, such that if $(j_0^{K-1} F, j_0^{K-1} \overline F) \in \mathbb C^{N(K)}$ does not belong to $X$, by Theorem \ref{suffcon2} we have local rigidity of $H$, which proves the theorem.
\end{proof}

\section{The Sphere Case}\label{spherecase} 
In this section we would like to show that the condition given in Theorem \ref{suffcon2} is not necessary for local rigidity. For the sphere case we have shown the properness of the action of isotropies on transversal and $2$-nondegenerate maps in \cite[Theorem 1.3]{Re2}, which corresponds to Lemma \ref{proact} for spheres, and the freeness of $G'$ in the sphere case is given in Lemma \ref{free}. Thus with the same proof we obtain that Theorem \ref{suffcon2} also holds when we consider $M=\mathbb H^2$ and $M'=\mathbb H^3$. From \cite{Re} it follows that the mapping 
\begin{align*}
H(z,w) = \frac{\left(z(1 + i w), \sqrt 2 z^2, w \right)}{1-w^2},
\end{align*}
which is a scaled version of the map $G_1^+$ from \cite[Theorem 1.4]{Re}, is locally rigid. This map corresponds to $(z,w)\mapsto (z^2, \sqrt{2} z w, w^2)$ as a map from $\mathbb S^2$ to $\mathbb S^3$. We want to compute $\aut_0 (H)$, which requires to solve the following equation:
\begin{align}
\label{rhosphere}
  \alpha_{3}(z,w) - \overline \alpha_{3}(\chi,\tau) - 2 i & \left(  \alpha_{1}(z,w)\left( \frac{\chi(1 - i \tau)}{1-\tau^2}\right) + \overline \alpha_{1}(\chi,\tau) \left(\frac{z(1+ i w)}{1-w^2} \right)\right. \\ \nonumber
 &   \left.+  \alpha_{2}(z,w) \left( \frac{\sqrt 2 \chi^2}{1-\tau^2}\right) + \overline \alpha_{2}(\chi,\tau) \left(\frac{\sqrt{2} z^2}{1-w^2} \right)\right) = 0,
\end{align}
if $w = \tau + 2 i z \chi$, for $(\alpha_1(z,w),\alpha_2(z,w),\alpha_3(z,w))\in (\mathbb C\{z,w\})^3$ with $\alpha_j(0,0)=0$. We proceed as in Example \ref{modelexample} and use the notation from there, we set $\Psi=(\Psi_1,\Psi_2,\Psi_3) \in (\mathbb C[[z,w]])^3$ from \eqref{formalpsi} for the jet parametrization. We give the equations deduced in Proposition \ref{linjetparam}. The first set of coefficients we denote by $S_1(\Psi)$  and are given by Proposition \ref{linjetparam} (i) as follows:
\begin{align*}
\Psi_1^{-1,4} & = -12 \Lambda_2^{0,1} + \Lambda_2^{0,3},\\
\Psi_3^{-1,4} & = 12 \sqrt{2} \Lambda_1^{0,1} - \sqrt{2} \Lambda_1^{0,3} - 12 i \Lambda_2^{1,0} + 3 i \Lambda_2^{1,2},\\
\Psi_1^{-1,5} & = 60 \Lambda_2^{0,1} + 6 i \Lambda_2^{0,2} - 5 \Lambda_2^{0,3} - i \Lambda_2^{0,4},\\
\Psi_3^{-1,5} & = -24 i \Lambda_1^{0,1} + 24 \Lambda_1^{0,2} + 2 i  \Lambda_1^{0,3} - \Lambda_1^{0,4} - 6 \sqrt{2} i \Lambda_2^{1,1} + \sqrt{2} i \Lambda_2^{1,3}.
\end{align*}
For Proposition \ref{linjetparam} (ii) we obtain the following equations from $\Psi$ denoted by $S_2(\Psi)$:
\begin{align*}
2 \Psi_2^{2,1} - \Lambda_2^{2,1} & = 12 \Lambda_1^{1,0} - 6 \Lambda_1^{1,2} + 3 \sqrt{2} i \Lambda_2^{2,1} - 12 \Lambda_3^{0,1} + \Lambda_3^{0,3},\\
4 \Psi_1^{2,2} - \Lambda_1^{2,2} & = 6 \Lambda_1^{2,0} + 6 i \Lambda_1^{2,1} - 3 \Lambda_1^{2,2} - 24 \Lambda_3^{1,1} + 2 \Lambda_3^{1,3},\\
4 \Psi_2^{2,2} - \Lambda_2^{2,2} & = -48 \Lambda_1^{1,0} - 24 i \Lambda_1^{1,1} + 24 \Lambda_1^{1,2} + 2 i \Lambda_1^{1,3} - 6 \sqrt{2} i \Lambda_2^{2,1} + 24 \Lambda_3^{0,1} + 24 i \Lambda_3^{0,2}\\& \quad  - 2 \Lambda_3^{0,3} - i \Lambda_3^{0,4},\\
6 \Psi_1^{1,3} - \Lambda_1^{1,3} & = - 96 i \Lambda_1^{1,0} + 84 \Lambda_1^{1,1} + 48 i \Lambda_1^{1,2} - 8 \Lambda_1^{1,3} - 6 \sqrt{2} i \Lambda_2^{2,0} + 9 \sqrt{2} \Lambda_2^{2,1} \\ & \quad + 3 \sqrt{2} i \Lambda_2^{2,2} + 36 i \Lambda_3^{0,1} - 72 \Lambda_3^{0,3} + 3 \Lambda_3^{0,4},\\
6 \Psi_2^{1,3} - \Lambda_2^{1,3} & = -2 88 \Lambda_1^{0,1} - 72 i \Lambda_1^{0,2} + 24 \Lambda_1^{0,3} + 3 i \Lambda_1^{0,4} + 96 \sqrt{2} i \Lambda_2^{1,0} - 12 \sqrt{2} \Lambda_2^{1,1}\\ & \quad - 24 \sqrt{2} i \Lambda_2^{1,2} + \sqrt{2} \Lambda_2^{1,3}.
\end{align*}
From Proposition \ref{linjetparam} (iii) we obtain first the following set $S_{3.1}(\rho)$ of coefficients of $\rho(z,\chi,\tau)$, which denotes the left-hand side of \eqref{rhosphere}:
\begin{align*}
\rho_{0,1,1} & = 2 \Lambda_1^{0,1} - i\overline{ \Lambda}_3^{1,1},\\
\rho_{1,1,0} & = \Lambda_3^{0,1} - \Lambda_1^{1,0} - \overline{\Lambda}_1^{1,0},\\
\rho_{1,2,0} & = 4 i \Lambda_1^{0,1} +2 \sqrt{2} \Lambda_2^{1,0} + \overline{\Lambda}_1^{2,0},\\
\rho_{1,2,1} & = 4 \Lambda_1^{0,1} + 4 i \Lambda_1^{0,2} + \sqrt{2} \Lambda_2^{1,1} + i \overline{\Lambda}_1^{2,0} + \overline{\Lambda}_1^{2,1},\\
\rho_{1,3,0} & = 4 \sqrt{2} i \Lambda_2^{0,1} +\overline{\Lambda}_3^{2,1},\\
\rho_{2,2,0} & = - 4 i \Lambda_1^{1,1} - \sqrt{2} \Lambda_2^{2,0} + 4 \overline{\Lambda}_1^{1,0} - \sqrt{2} \overline{\Lambda}_2^{2,0} + 2 i \Lambda_3^{0,2},\\
\rho_{2,2,1} & = -4 \Lambda_1^{1,1} - 4 i \Lambda_1^{1,2} - \sqrt{2} \Lambda_2^{2,1} + 4 i \overline{\Lambda}_1^{1,0} + 4 \overline{\Lambda}_1^{1,1} - 6 i \overline{\Lambda}_1^{1,2} +2 \sqrt{2} \overline{\Lambda}_2^{2,1} - 12 i \overline{\Lambda}_3^{0,1} + i \overline{\Lambda}_3^{0,3} + i \Lambda_3^{0,3},\\
\rho_{2,3,0} & = 2 i \Lambda_1^{0,2} + \sqrt{2} \Lambda_2^{1,1} + i \overline{\Lambda}_1^{2,0} + \overline{\Lambda}_1^{2,1} - \overline{\Lambda}_3^{1,2},\\
\rho_{2,3,1} & =  -96i \Lambda_1^{0,1} + 4 \Lambda_1^{0,2} + 10 i \Lambda_1^{0,3} - 48 \sqrt{2} \Lambda_2^{1,0} + 14 \sqrt{2} \Lambda_2^{1,2} + 6 \overline{\Lambda}_1^{2,0} - 2 i \overline{\Lambda}_1^{2,1} - \overline{\Lambda}_1^{2,2} - 12 \overline{\Lambda}_3^{1,1} + \overline{\Lambda}_3^{1,3},\\
\rho_{2,4,0} & = \sqrt{2} i \Lambda_2^{0,2} + \overline{\Lambda}_3^{2,2},
\end{align*}
and the following coefficients of $\rho$, which we denote by $S_{3.2}(\rho)$:
\begin{align*}
\rho_{1,1,1} & = i \Lambda_1^{1,0} - i \overline{\Lambda}_1^{1,0} - \Lambda_1^{1,1}  - \overline{\Lambda}_1^{1,1} + \Lambda_3^{0,3},\\
\rho_{2,2,2} & = -8 i \Lambda_1^{1,1} - 8 \Lambda_1^{1,2} - 2 i \Lambda_1^{1,3} - 2 \sqrt{2} \Lambda_2^{2,0} - \sqrt{2} \Lambda_2^{2,2} + 48 \overline{\Lambda}_1^{1,0} - 40 i \overline{\Lambda}_1^{1,1} - 8 \overline{\Lambda}_1^{1,2} + 2 i \overline{\Lambda}_1^{1,3}  \\ & \quad - 2 \sqrt{2} \overline{\Lambda}_2^{2,0} - \sqrt{2} \overline{\Lambda}_2^{2,2} + 24 i \overline{\Lambda}_3^{0,2} - i \overline{\Lambda}_3^{0,4} + i \Lambda_3^{0,4}.
\end{align*}
We write 
\begin{align*}
S(\Psi,\rho)=\{S_1(\Psi),\overline{S}_1(\Psi),S_2(\Psi),\overline{S}_2(\Psi),S_{3.1}(\rho),\overline{S}_{3.1}(\rho), S_{3.2}(\rho)\},
\end{align*}
which consists of $40$ components. Then we consider $\lambda \in \mathbb C^{40}$, the vector consisting of the following elements of the $4$-jet of $\alpha$ at $0$, given by
\begin{align*}
\Lambda_{3}^{0,1}, \Lambda_{1}^{2,0}, \Lambda_{3}^{1,1}, \Lambda_{3}^{0,2},\Lambda_{1}^{2,1} ,\Lambda_{3}^{2,1},\Lambda_{1}^{1,2} ,\Lambda_{3}^{1,2},\Lambda_{1}^{2,2} ,\Lambda_{3}^{2,2},  \Lambda^{0,3}, \Lambda^{1,3}, \Lambda^{0,4},
\end{align*}
and its conjugates and the derivatives $\overline{\Lambda}_{2}^{2,2}$ and $\overline{\Lambda}_{2}^{2,0}$. The remaining elements in $J_0^4$ do not occur. We have used the notation $\Lambda^{m,n}=(\Lambda_{1}^{m,n},\Lambda_{2}^{m,n},\Lambda_{3}^{m,n})$. It holds that the Jacobian of $S(\Psi,\rho)$ with respect to $\lambda$ is of full rank. After we get rid of trivial solutions of \eqref{rhosphere} by considering infinitesimal automorphisms of the spheres fixing $0$, we end up with the following $8$ infinitesimal deformations in the space $\mathfrak {hol}_0 (H)$ of solutions of \eqref{rhosphere}:
\begin{align*}
X_1 & = \frac{\sqrt{2} w z}{(w+i) \left(w^2-1\right)} \frac{\partial}{\partial z_1'} + \frac{(w-i) z^2}{(w+i) \left(w^2-1\right)} \frac{\partial}{\partial z_2'} \\
X_2  & =-\frac{w z^2}{(w+i) \left(w^2-1\right)}  \frac{\partial}{\partial z_1'} +  \frac{i z \left(w^2+i w+2 z^2\right)}{\sqrt{2} (w+i)
   \left(w^2-1\right)} \frac{\partial}{\partial z_2'}  \\
X_3 & = \frac{3 w^2 z}{\sqrt{2} (w+i) \left(w^2-1\right)}  \frac{\partial}{\partial z_1'} -\frac{3 z^2}{(w+i) \left(w^2-1\right)}  \frac{\partial}{\partial z_2'}\\
X_4 & =  \frac{w \left(w^2-4 z^2-1\right)}{2 (w+i) \left(w^2-1\right)}  \frac{\partial}{\partial z_1'} + \frac{i z \left(w^2+2 i w+4
   z^2+1\right)}{\sqrt{2} (w+i) \left(w^2-1\right)}  \frac{\partial}{\partial z_2'} + \frac{w z}{w^2-1}  \frac{\partial}{\partial w'}\\
X_5  & =   \frac{4 i \sqrt{2} w z^3}{(w+i) \left(w^2-1\right)^2}  \frac{\partial}{\partial z_1'}  - \frac{i w^5-w^4-i w^3+w^2-4 i z^4 w-4 z^4}{(w+i)
   \left(w^2-1\right)^2}   \frac{\partial}{\partial z_2'} +   \frac{2 \sqrt{2} w^2 z^2}{\left(w^2-1\right)^2} \frac{\partial}{\partial w'}\\
X_6 & = -\frac{2 \sqrt{2} w z^3}{(w+i) \left(w^2-1\right)^2}  \frac{\partial}{\partial z_1'}-\frac{w^5+i w^4-w^3-i w^2+4 z^4 w-4 i z^4}{2 (w+i)
   \left(w^2-1\right)^2} \frac{\partial}{\partial z_2'} \\
   &\quad + \frac{i \sqrt{2} w^2 z^2}{\left(w^2-1\right)^2}\frac{\partial}{\partial w'} \\
X_7 & =   \frac{w^4+4 i z^2 w^3-\left(2 z^2+1\right) w^2+2 z^2}{(w+i) \left(w^2-1\right)^2}  \frac{\partial}{\partial z_1'} \\ & \quad + \frac{\sqrt{2} z
   \left(-w^4-i w^3+2 z^2 w^2+i \left(2 z^2+1\right) w+1\right)}{(w+i) \left(w^2-1\right)^2}  \frac{\partial}{\partial z_2'} + \frac{2 w^2
   z}{\left(w^2-1\right)^2} \frac{\partial}{\partial w'} \\
X_8 & =  \frac{w^2 \left(w^2+4 i z^2 w-1\right)}{2 (w+i) \left(w^2-1\right)^2}   \frac{\partial}{\partial z_1'} + \frac{z \left(-w^4-2 i w^3+\left(4
   z^2+1\right) w^2+2 i \left(z^2+1\right) w-2 z^2\right)}{\sqrt{2} (w+i) \left(w^2-1\right)^2} \frac{\partial}{\partial z_2'} \\
   & \quad + \frac{w^2
   z}{\left(w^2-1\right)^2} \frac{\partial}{\partial w'}.
\end{align*}
Hence, this example shows that the condition given in Theorem \ref{suffcon2} is not necessary for local rigidity.

\begin{bibdiv}
\begin{biblist}

\bib{BER}{article}{
   author={Baouendi, M. S.},
   author={Ebenfelt, P.},
   author={Rothschild, L. P.},
   title={Parametrization of local biholomorphisms of real analytic
   hypersurfaces},
   journal={Asian J. Math.},
   volume={1},
   date={1997},
   number={1},
   pages={1--16},
   issn={1093-6106},
   review={\MR{1480988 (99b:32022)}},
}

\bib{BER2}{book}{
   author={Baouendi, M. S.},
   author={Ebenfelt, P.},
   author={Rothschild, L. P.},
   title={Real submanifolds in complex space and their mappings},
   series={Princeton Mathematical Series},
   volume={47},
   publisher={Princeton University Press},
   place={Princeton, NJ},
   date={1999},
   pages={xii+404},
   isbn={0-691-00498-6},
   review={\MR{1668103 (2000b:32066)}},
}

\bib{BV}{article}{
   author={Beloshapka, V. K.},
   author={Vitushkin, A. G.},
   title={Estimates of the radius of convergence of power series that give
   mappings of analytic hypersurfaces},
   language={Russian},
   journal={Izv. Akad. Nauk SSSR Ser. Mat.},
   volume={45},
   date={1981},
   number={5},
   pages={962--984, 1198},
   issn={0373-2436},
   review={\MR{637612 (83f:32017)}},
}

\bib{CM}{article}{
   author={Chern, S. S.},
   author={Moser, J. K.},
   title={Real hypersurfaces in complex manifolds},
   journal={Acta Math.},
   volume={133},
   date={1974},
   pages={219--271},
   issn={0001-5962},
   review={\MR{0425155 (54 \#13112)}},
}

\bib{CH}{article}{
   author={Cho, Chung-Ki},
   author={Han, Chong-Kyu},
   title={Finiteness of infinitesimal deformations of CR mappings of CR
   manifolds of nondegenerate Levi form},
   journal={J. Korean Math. Soc.},
   volume={39},
   date={2002},
   number={1},
   pages={91--102},
   issn={0304-9914},
   review={\MR{1872584 (2002j:32036)}},
   doi={10.4134/JKMS.2002.39.1.091},
}

\bib{Da}{article}{
   author={D'Angelo, John P.},
   title={Proper holomorphic maps between balls of different dimensions},
   journal={Michigan Math. J.},
   volume={35},
   date={1988},
   number={1},
   pages={83--90},
   issn={0026-2285},
   review={\MR{931941 (89g:32038)}},
   doi={10.1307/mmj/1029003683},
}

\bib{DK}{book}{
   author={Duistermaat, J. J.},
   author={Kolk, J. A. C.},
   title={Lie groups},
   series={Universitext},
   publisher={Springer-Verlag, Berlin},
   date={2000},
   pages={viii+344},
   isbn={3-540-15293-8},
   review={\MR{1738431 (2001j:22008)}},
   doi={10.1007/978-3-642-56936-4},
}

\bib{Ez} {article}{
    AUTHOR = {Ezhov, V. V.},
     TITLE = {Linearization of the stability group of a class of
              hypersurfaces},
   JOURNAL = {Uspekhi Mat. Nauk},
  FJOURNAL = {Akademiya Nauk SSSR i Moskovskoe Matematicheskoe Obshchestvo.
              Uspekhi Matematicheskikh Nauk},
    VOLUME = {41},
      YEAR = {1986},
    NUMBER = {3(249)},
     PAGES = {181--182},
      ISSN = {0042-1316},
   MRCLASS = {32F25},
  MRNUMBER = {854251 (87k:32033)},
MRREVIEWER = {Harold P. Boas},
}

\bib{Fa2}{article}{
   author={Faran, James J.},
   title={Maps from the two-ball to the three-ball},
   journal={Invent. Math.},
   volume={68},
   date={1982},
   number={3},
   pages={441--475},
   issn={0020-9910},
   review={\MR{669425 (83k:32038)}},
   doi={10.1007/BF01389412},
}

\bib{Fa}{article}{
   author={Faran, James J.},
   title={The linearity of proper holomorphic maps between balls in the low
   codimension case},
   journal={J. Differential Geom.},
   volume={24},
   date={1986},
   number={1},
   pages={15--17},
   issn={0022-040X},
   review={\MR{857373 (87k:32050)}},
}

\bib{Hu}{article}{
   author={Huang, Xiaojun},
   title={On a linearity problem for proper holomorphic maps between balls
   in complex spaces of different dimensions},
   journal={J. Differential Geom.},
   volume={51},
   date={1999},
   number={1},
   pages={13--33},
   issn={0022-040X},
   review={\MR{1703603 (2000e:32020)}},
}

\bib{HJ}{article}{
   author={Huang, Xiaojun},
   author={Ji, Shanyu},
   title={Mapping $\bold B^n$ into $\bold B^{2n-1}$},
   journal={Invent. Math.},
   volume={145},
   date={2001},
   number={2},
   pages={219--250},
   issn={0020-9910},
   review={\MR{1872546 (2002i:32013)}},
   doi={10.1007/s002220100140},
}

\bib{JL}{article}{
   author={Juhlin, Robert},
   author={Lamel, Bernhard},
   title={Automorphism groups of minimal real-analytic CR manifolds},
   journal={J. Eur. Math. Soc. (JEMS)},
   volume={15},
   date={2013},
   number={2},
   pages={509--537},
   issn={1435-9855},
   review={\MR{3017044}},
   doi={10.4171/JEMS/366},
}

\bib{KL}{article}{
   author={Kruzhilin, N. G.},
   author={Loboda, A. V.},
   title={Linearization of local automorphisms of pseudoconvex surfaces},
   language={Russian},
   journal={Dokl. Akad. Nauk SSSR},
   volume={271},
   date={1983},
   number={2},
   pages={280--282},
   issn={0002-3264},
   review={\MR{718188 (85c:32052)}},
}

\bib{La}{article}{
   author={Lamel, Bernhard},
   title={Holomorphic maps of real submanifolds in complex spaces of
   different dimensions},
   journal={Pacific J. Math.},
   volume={201},
   date={2001},
   number={2},
   pages={357--387},
   issn={0030-8730},
   review={\MR{1875899 (2003e:32066)}},
   doi={10.2140/pjm.2001.201.357},
}

\bib{Le}{article}{
   author={Lebl, Ji{\v{r}}{\'{\i}}},
   title={Normal forms, Hermitian operators, and CR maps of spheres and
   hyperquadrics},
   journal={Michigan Math. J.},
   volume={60},
   date={2011},
   number={3},
   pages={603--628},
   issn={0026-2285},
   review={\MR{2861091}},
   doi={10.1307/mmj/1320763051},
}

\bib{Re}{article}{
   author={Reiter, Michael},
   title={Holomorphic mappings of hyperquadrics from $\mathbb C^2$ to $\mathbb C^3$},
   journal={Ph.D. thesis, University of Vienna, http://othes.univie.ac.at/33603/},
   date={2014}
   }
   
 \bib{Re2}{article}{
   author={Reiter, Michael},
   title={Classification of holomorphic mappings of hyperquadrics from $\mathbb C^2$ to $\mathbb C^3$},
   note={To appear in {\em Journal of Geometric Analysis}, \href{http://dx.doi.org/10.1007/s12220-015-9594-6}{doi:10.1007/s12220-015-9594-6}},
   date={2014},
   }  
   
\bib{Re3}{article}{
   author={Reiter, Michael},
   title={Topological Aspects of holomorphic mappings of hyperquadrics from $\mathbb C^2$ to $\mathbb C^3$},
   journal={Submitted, http://arxiv.org/abs/1410.6262},
   date={2014},
   }

\bib{We}{article}{
   author={Webster, S. M.},
   title={The rigidity of C-R hypersurfaces in a sphere},
   journal={Indiana Univ. Math. J.},
   volume={28},
   date={1979},
   number={3},
   pages={405--416},
   issn={0022-2518},
   review={\MR{529673 (80d:32022)}},
   doi={10.1512/iumj.1979.28.28027},
}

 \bib{wolfram}{article}{
   author={Wolfram Research},
   title={ Mathematica 9.0.1.0},
   journal={ Wolfram Research, Inc., Champaign, Illinois},
   date={2013},
   }

\bib{Ebenfelt:2004um}{article}{
author = {Ebenfelt, Peter},
author = {Huang, Xiaojun},
author = {Zaitsev, Dmitri},
title = {{Rigidity of CR-immersions into spheres}},
journal = {Communications in Analysis and Geometry},
year = {2004},
volume = {12},
number = {3},
pages = {631--670}
}

\bib{Baouendi:2008cz}{article}{
author = {Baouendi, M Salah},
author = {Ebenfelt, Peter},
author = {Huang, Xiaojun},
title = {{Super-rigidity for CR embeddings of real hypersurfaces into hyperquadrics}},
journal = {Adv. Math.},
year = {2008},
volume = {219},
number = {5},
pages = {1427--1445},
}

\bib{Baouendi:2005uq}{article}{
author = {Baouendi, M Salah},
author = {Huang, Xiaojun},
title = {{Super-rigidity for holomorphic mappings between hyperquadrics with positive signature}},
journal = {Journal of Differential Geometry},
year = {2005},
volume = {69},
number = {2},
pages = {379--398}
}

\bib{Ebenfelt:2005wj}{article}{
author = {Ebenfelt, Peter},
author = {Huang, Xiaojun},
author = {Zaitsev, Dmitri},
title = {{The equivalence problem and rigidity for hypersurfaces embedded into hyperquadrics}},
journal = {American Journal of Mathematics},
year = {2005},
volume = {127},
number = {1},
pages = {169--191}
}

\bib{Ebenfelt:2015wq}{article}{
author = {Ebenfelt, Peter},
author = {Shroff, Ravi},
title = {{Partial rigidity of CR embeddings of real hypersurfaces into hyperquadrics with small signature difference}},
journal = {Communications in Analysis and Geometry},
year = {2015},
volume = {23},
number = {1},
pages = {159--190}
}

\end{biblist}
\end{bibdiv}
\end{document}